\title{Low-Rank Tensor Recovery via Theta Nuclear p-Norm}
\author{Felix Röhrich \footnote{Email: roehrich@art.rwth-aachen.de}, 
Yuhuai Zhou \footnote{Email: zhou@art.rwth-aachen.de}}
\date{}
\DeclareMathOperator{\THETABODY}{TH}
\newcommand{\thetabody}[1]{\THETABODY_{#1}}
\newcommand{\Thetabody}[1]{\widetilde{\THETABODY_{#1}}}
\newcommand{\bvee}{\bar\vee}
\newcommand{\bwedge}{\bar\wedge}
\newcommand{\tensorvec}[2]{#1^{(#2)}}
\begin{document}

\maketitle
\begin{abstract}

    We investigate the low-rank tensor recovery problem using a relaxation of the nuclear p-norm by theta bodies.
    We provide algebraic descriptions of the norms and compute their Gröbner bases. 
    Moreover, we develop geometric properties of these bodies. 
    Finally, our numerical results suggest that for
    $n\odots{\times} n$ tensors, 
    $m\geq O(n)$ measurements should be sufficient to recover low-rank tensors via theta body relaxation.
    
\end{abstract}

\footnotetext{
\textbf{Keywords.}
  algebraic geometry, convex geometry, Gröbner basis, semidefinite program, compressive sensing, tensor, sum of squares, theta body, gaussian width.
  }

\footnotetext{
\textbf{MSC 2020.}
  13A50, %Actions of groups on commutative rings; invariant theory
  13P10, %Gr¨obner bases; other bases for ideals and modules (e.g., Janet and border bases)
  14L30, %Group actions on varieties or schemes (quotients)
  14P05, %Real algebraic sets
  15A29, %Inverse problems in linear algebra
  15A69, %Multilinear algebra, tensor calculus
  52A41, %Convex functions and convex programs in convex geometry
  60G15, %Gaussian processes
  90C22, %Semidefinite programming
  %90C25, %Convex programming
  94A20 %Sampling theory in information and communication theory
}

\section*{Introduction}

The low-rank tensor recovery problem seeks to "solve" an incomplete linear system of tensors with a low-rank solution. 
Specifically, given $A_i\in X := \Rbb^{n_1\times\cdots \times n_d}$ and $b_i\in\Rbb$ for $i=1,\cdots, m$ where $m\ll n_1\cdots n_d$,
the goal is to find a low-rank tensor $x\in X$ such that the classical linear equations hold,
\begin{equation}\label{problem: ;low rank tensor}
    \inner{A_i}{x}_F = b_i, i=1,\cdots, m,
\end{equation}
where $\inner{\cdot}{\cdot}_F$ stands for the Frobenius inner product.
When $d=2$, the problem is reduced to the low-rank matrix recovery problem.
In this case, nuclear norm minimization provides efficient algorithms to find low-rank solutions.
We refer to \cite{Chandrasekaran_Recht_Parrilo_Willsky_2012}, \cite{Foucart_Rauhut_2013} and \cite{Tropp_2015} for details.

However, computing the nuclear norm of general tensors is NP-hard (\cite{Friedland_Lim_2018},\cite{Hillar_Lim_2013}). To overcome this problem, Rauhut and Stojanac used relaxations of theta bodies in \cite{Rauhut_Stojanac_2017}, which were introduced by Gouveia et al. in \cite{Gouveia_Parrilo_Thomas_2010}.
Motivated by their result, we investigate algebraic and geometric properties of the relaxations in the more general context of nuclear $p$-norms and provide an estimate of the sufficient number of measurements required for a successful recovery.

The first step for this is to find ideals $I_p \subset \Rbb[X]$, such that the unit ball $B_p$ of the nuclear $p$-norm is the convex hull of the real algebraic variety $\Vcal_\Rbb(I_p)$, and to determine their reduced Gröbner basis; this is done in Sections \ref{section: algebraic-description} and \ref{section: Gröbner bases proofs}. This allows us to formulate the relaxations as a semidefinite program with which the recovery problem can be solved efficiently \cite{Blekherman_Parrilo_Thomas_2012}. 

In Section \ref{section: geometry} we study the geometry of theta bodies. For $p \in \{1, \infty\}$, we show that $I_p$ is real reduced and that the theta body hierarchy is finitely convergent to the nuclear $p$-norm. Afterwards, we show the following two theorems.
\begin{theorem}\label{theorem: symmetry}
    For $p\in \{1,2,\infty\}$,
    the symmetry group of $B_p$ acts invariantly on the theta bodies. In particular, the theta bodies define norms.
\end{theorem}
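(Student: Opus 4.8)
The plan is to reduce the statement to one algebraic fact — that every symmetry of $B_p$ stabilises the ideal $I_p$ — and then to invoke the functoriality of the theta body construction. Write $G := \{\, g \in \mathrm{GL}(X) : g(B_p) = B_p \,\}$ for the linear symmetry group of $B_p$, and let $g^{*}$ denote the graded $\Rbb$-algebra automorphism of $\Rbb[X]$ defined by $(g^{*}f)(x) = f(gx)$. Since $g^{*}$ preserves degree and carries squares to squares, as soon as $g^{*}I_p = I_p$ the property ``the linear form $\ell$ is a sum of squares of polynomials of degree at most $k$ modulo $I_p$'' is invariant under $\ell \mapsto g^{*}\ell$; dualising the definition of the theta body, this gives $g\cdot\thetabody{k}(I_p) = \thetabody{k}(I_p)$ for every $k$. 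Hence the theorem reduces to showing $g^{*}I_p = I_p$ for all $g \in G$.

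For this I would use the description obtained in Section~\ref{section: algebraic-description}, where $B_p = \mathrm{conv}\,\Vcal_\Rbb(I_p)$ and the real points of $\Vcal_\Rbb(I_p)$ are precisely the extreme points of $B_p$. A symmetry $g\in G$ permutes the extreme points of $B_p$, hence permutes $\Vcal_\Rbb(I_p)$, and therefore stabilises the vanishing ideal $I(\Vcal_\Rbb(I_p))$. For $p \in \{1,\infty\}$ we have already shown that $I_p$ is real reduced, so $I_p = I(\Vcal_\Rbb(I_p))$ and we are done. For $p = 2$, where real reducedness is not available, I would argue directly on generators instead: the generators of $I_2$ exhibited in Section~\ref{section: algebraic-description} span a $G$-invariant subspace of $\Rbb[X]$ (they are assembled from the $G$-covariant data defining $B_2$), so again $g^{*}I_2 = I_2$. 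In all three cases $G$ acts invariantly on every theta body $\thetabody{k}(I_p)$.

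To deduce the ``in particular'', I would check the defining properties of a norm ball. Every $\thetabody{k}(I_p)$ is closed, convex, and contains $B_p$, hence a neighbourhood of the origin; it is centrally symmetric because $-\mathrm{id}\in G$ acts invariantly on it; and it is bounded, since $B_p \subseteq \thetabody{k}(I_p) \subseteq \thetabody{1}(I_p)$ and $\thetabody{1}(I_p)$ is bounded — it is contained in a Frobenius ball, because $I_p$ contains a polynomial of the form $\|x\|_F^{2} - c_p$ with $c_p > 0$ (cf.\ Section~\ref{section: Gröbner bases proofs}); equivalently, one may note that the recession cone of the $G$-invariant symmetric set $\thetabody{k}(I_p)$ is a $G$-invariant subspace, hence trivial, since $G \supseteq \mathrm{O}(n_1)\times\cdots\times\mathrm{O}(n_d)$ acts irreducibly on $X = \Rbb^{n_1}\otimes\cdots\otimes\Rbb^{n_d}$ and $\thetabody{k}(I_p)\ne X$. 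A bounded centrally symmetric convex body with the origin in its interior is the closed unit ball of a norm.

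The step I expect to be the main obstacle is securing the geometric input — identifying $\Vcal_\Rbb(I_p)$ with the extreme points of $B_p$ and pairing this with real reducedness — since that is exactly what makes ideal invariance automatic rather than a case-by-case computation. The case $p = 2$ needs extra care because one is forced to verify invariance of $I_2$ on the level of its generators rather than through its variety. Everything downstream of ideal invariance — passing to the theta bodies by functoriality and checking that they are norm balls — is soft.
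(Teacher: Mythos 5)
Your overall strategy matches the paper's: reduce invariance of the theta bodies to invariance of the ideal $I_p$ under the symmetry group, obtain ideal invariance from variety invariance via real reducedness for $p=1,\infty$, and treat $p=2$ separately because real reducedness of $I_2$ is not available. The functoriality step (ideal invariance $\Rightarrow$ invariance of $\Sigma_k(I_p)$ $\Rightarrow$ invariance of $\thetabody{k}(I_p)$ under the dual action) is exactly the paper's Lemma \ref{action in Rx/I} together with the subsequent discussion of the action on $V_{2k}^*$; you gloss over the well-definedness of the dual action and the interaction with the closure operator $\cl\,\pi_x(\cdot)$, which the paper handles via compactness of $(\Sigma_k(I)^*)^c$, but that is routine. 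You are in fact more careful than the paper about the ``defines a norm'' conclusion (central symmetry from $-\mathrm{id}$, boundedness from the quadric $\sum_a x_a^2 - c \in I_p$ via Theorem \ref{Theorem: theta1-convex-quadrics}); the paper leaves boundedness largely implicit.

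The one place where your argument is an assertion rather than a proof is precisely where the paper has to work: the case $p=2$. You claim the generators of $I_2$ ``span a $G$-invariant subspace because they are assembled from $G$-covariant data,'' but this is the crux, not a justification. The paper's route is to isolate the rank-one binomial ideal $I_0$, prove via its Gr\"obner basis that $I_0$ is \emph{real radical}, and then argue: for $f = c f_2 + h$ with $f_2 = \sum_a x_a^2 - 1$ and $h \in I_0$, the quadric $f_2$ is fixed by $\Ogroup(n_1)\otimes\cdots\otimes\Ogroup(n_d)$ and $g\cdot h$ vanishes on the (rank-preserved) variety $\Vcal_\R(I_0)$, hence lies in $I_0$ by the Real Nullstellensatz. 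Your span claim is true and closable — e.g., the linear span of $\{x_a x_b - x_{a\wedge b}x_{a\vee b}\}$ coincides with the span of the $2\times 2$ minors of all flattenings, which is manifestly invariant under $g_1\otimes\cdots\otimes g_d$ — but some such verification (or the paper's real-radicality of $I_0$) must actually be supplied; without it the $p=2$ case is open. A secondary caveat: your extreme-point identification $\Vcal_\R(I_p) = \mathrm{ext}(B_p)$ itself relies on the results of Section \ref{section: extreme points}, so the logical ordering needs care, and for $p=2$ one should be explicit that the group being handled is $\Ogroup(n_1)\otimes\cdots\otimes\Ogroup(n_d)$ (as in the paper) rather than the a priori possibly larger full linear symmetry group of $B_2$.
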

\begin{theorem}\label{theorem: extreme point}
    For $p\in \{1,2,\infty\}$,
    the extreme points of $B_p$ are preserved under theta body relaxations.
\end{theorem}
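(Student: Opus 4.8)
The plan is to show that each extreme point of $B_p$ is an \emph{exposed} point of the first theta body $\thetabody{1}(I_p)$ and then to bootstrap to all levels. This reduction is legitimate: since $B_p=\mathrm{conv}\,\Vcal_\Rbb(I_p)$ with $\Vcal_\Rbb(I_p)$ compact, Milman's partial converse to the Krein--Milman theorem gives $\mathrm{ext}(B_p)\subseteq\Vcal_\Rbb(I_p)$, so every extreme point of $B_p$ lies in every $\thetabody{k}(I_p)$; and whenever $\thetabody{k}(I_p)\subseteq\thetabody{1}(I_p)$, any point of $\thetabody{k}(I_p)$ that is extreme in $\thetabody{1}(I_p)$ is extreme in $\thetabody{k}(I_p)$. (Extreme points of $B_p$ that fail to be exposed, if any, are then recovered as limits of exposed ones by Straszewicz's theorem.)

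The case $p=2$ is immediate because in fact $\thetabody{1}(I_2)=B_2$: with $I_2=\langle\,\|x\|_F^2-1\,\rangle$, the Lagrange identity $\|c\|_F^2\|x\|_F^2-\inner{c}{x}_F^2=\sum_{i<j}(c_ix_j-c_jx_i)^2$ shows $1-\inner{c}{x}_F\equiv\left(\tfrac{1}{\sqrt2}(1-\inner{c}{x}_F)\right)^2+\sum_{i<j}\left(\tfrac{1}{\sqrt2}(c_ix_j-c_jx_i)\right)^2\pmod{I_2}$ whenever $\|c\|_F=1$, so every supporting linear functional of $B_2$ is $1$-sos modulo $I_2$ and hence $\thetabody{1}(I_2)\subseteq B_2$.

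For $p\in\{1,\infty\}$ I would argue as follows. By Theorem~\ref{theorem: symmetry} the symmetry group $G$ of $B_p$ fixes $I_p$, so it carries $1$-sos certificates to $1$-sos certificates and ``being an exposed point of $\thetabody{1}(I_p)$'' is a $G$-equivariant property; hence it suffices to treat one representative $v_0$ in each of the finitely many $G$-orbits of $\mathrm{ext}(B_p)$ --- for $p=1$ the single orbit of the rank-one unit tensor $e_1\odots{\otimes}e_1$, for $p=\infty$ the orbit(s) of the extreme points of the spectral ball identified in Section~\ref{section: algebraic-description}. Fix $v_0$, let $c_0$ be the dual-norm linear functional exposing $v_0$ in $B_p$, and put $\ell_{v_0}:=\inner{c_0}{v_0}_F-\inner{c_0}{\cdot}_F$, an affine form nonnegative on $\Vcal_\Rbb(I_p)$ and vanishing at $v_0$. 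The heart of the proof is to exhibit an explicit decomposition $\ell_{v_0}\equiv\sum_i h_i^2\pmod{I_p}$ with $\deg h_i\le1$, obtained by reducing the quadratic part of $\ell_{v_0}$ with the reduced Gröbner basis of $I_p$ from Sections~\ref{section: algebraic-description}--\ref{section: Gröbner bases proofs}. Such a certificate gives $\ell_{v_0}\ge0$ on $\thetabody{1}(I_p)$; and since $I_p$ contains a relation fixing $\|x\|_F^2$, the moment spectrahedron describing $\thetabody{1}(I_p)$ is compact, so every $x\in\thetabody{1}(I_p)$ is represented by a positive semidefinite functional $L$ on $\Rbb[X]_{\le2}$ with $L(1)=1$, $L$ vanishing on $(I_p)_{\le2}$ and $L$ agreeing with evaluation at $x$ on affine forms; then $\ell_{v_0}(x)=L(\ell_{v_0})=\sum_i L(h_i^2)$, so $\ell_{v_0}(x)=0$ forces each $L(h_i^2)=0$, whence $h_i$ lies in the kernel of the moment matrix and $h_i(x)=L(h_i)=0$. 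Thus the face of $\thetabody{1}(I_p)$ exposed by $\ell_{v_0}$ is contained in $\bigcap_i\{h_i=0\}$, and it remains to show that the linear forms $h_i$ occurring in (one or more) such decompositions have no common zero other than $v_0$. Then $v_0$ is an exposed point of $\thetabody{1}(I_p)$; for $d=2$ and $p=1$ this recovers the certificate of Rauhut and Stojanac \cite{Rauhut_Stojanac_2017}.

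The two reductions and the moment-matrix bookkeeping are routine, following the standard theta-body theory \cite{Gouveia_Parrilo_Thomas_2010, Blekherman_Parrilo_Thomas_2012}. The main obstacle is the explicit construction of the degree-$\le1$ certificates together with the verification that the exposed face collapses to a point, and I expect this to be hardest for $p=\infty$, where the extreme points of the spectral ball and the generators of $I_\infty$ are considerably more intricate than the rank-one geometry underlying $p=1$, so the Gröbner basis and the symmetry reduction will have to carry most of the weight.
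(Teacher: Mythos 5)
There are two genuine gaps. First, your $p=2$ case rests on a misidentification: $I_2$ is \emph{not} $\langle\|x\|_F^2-1\rangle$ but also contains the rank-one binomials $x_ax_b-x_{a\wedge b}x_{a\vee b}$, and $B_2$ is the nuclear-norm ball $\conv\{\text{unit rank-one tensors}\}$, not the Frobenius ball. Your Lagrange-identity computation shows only that the theta-1 body of the \emph{sphere} ideal is the Euclidean ball; it says nothing about $\thetabody{1}(I_2)$ versus $B_2$. The claimed equality $\thetabody{1}(I_2)=B_2$ is precisely the theta-exactness of $I_2$, which is known only for matrices ($d=2$, Rauhut--Stojanac) and is open for $d\ge 3$ --- so the $p=2$ case has no proof. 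Second, for $p\in\{1,\infty\}$ you explicitly defer the ``heart of the proof'' (the explicit degree-$\le1$ sos certificates and the verification that the exposed face collapses to a point), so the argument is a plan rather than a proof. A smaller logical slip: a limit of exposed points of $\thetabody{1}(I_p)$ need not be an extreme point of $\thetabody{1}(I_p)$ (the extreme-point set of a convex body is not closed in general), so the Straszewicz step does not transfer extremality the way you use it; it is harmless here only because all extreme points of $B_p$ for $p\in\{1,2,\infty\}$ happen to be exposed.

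The intended argument is much lighter and needs neither exactness nor explicit certificates. By Theorem \ref{Theorem: theta1-convex-quadrics}, $\Thetabody{1}(I)=\bigcap\{f\le 0\}$ over convex quadrics $f=x^TA_fx+B_fx+C_f\in I$. If $x\in\Vcal(I)$ is extreme in $\conv(\Vcal(I))$ and $x=\lambda y+(1-\lambda)z$ with $y,z\in\Thetabody{1}(I)$, then $f(x)=0\le\lambda f(y)+(1-\lambda)f(z)\le 0$ forces $f(y)=f(z)=0$ and hence $y-z\in\ker(A_f)$ for every such $f$. Since $\sum_a x_a^2+C\in I_p$ for each $p\in\{1,2,\infty\}$ (with quadratic part the identity), the kernels intersect trivially, so $y=z=x$; the inclusions $\Thetabody{1}\supset\Thetabody{k}\supset\thetabody{k}\supset B_p$ then transfer extremality to every level. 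You may want to compare this with your moment-matrix route: the kernel-of-the-Hessian argument is exactly the ``exposed face collapses to a point'' step you were missing, done once for all orbits and all three values of $p$ simultaneously.
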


Finally, we address the sufficient number of measurements required for a successful recovery in Section \ref{section: sufficient number}, which is important for practical applications. In this present work, the entries of the measurements $A_i$ for $i=1,...,m$ are assumed to be generated by a standard Gaussian distribution.
Thus, for all $\epsilon\in(0,1)$,
there exists a threshold $m_0$, called \emph{the lower bound of the sufficient number of measurements}, such that if $m\geq m_0$, any tensor of rank up to $r$ can be recovered with high probability ($1-\epsilon$).
We are concerned with the question of
how $m_0$ depends on the size of the tensor $n_1,...,n_d$ and the rank $r$.
In the case of matrices ($d=2$), 
it is known that $m_0\sim O(r(n_1+n_2))$ where $r$ denotes the rank of the matrix (\cite{Tropp_2015},\cite{Chandrasekaran_Recht_Parrilo_Willsky_2012}). By using matricization, i.e. rewriting a tensor as a matrix via a vector space isomorphism, matrix nuclear norm minimization can be used to solve the recovery problem.
%Using matrix nuclear norm minimization, 
%we can quickly construct an algorithm for the low-rank tensor recovery by matricization.
%For example, we take $d=4$.
%We can convert the tensor to a matrix of shape $n_1n_2\times n_3n_4$.
%The linear measurements stay the same. 
%The nuclear norm minimization algorithm in this matricization requires measurements at least $O(r(n_1n_2+n_3n_4))$.
When $n_1=\cdots =n_d = n$, we obtain $m_0\sim O(n^{\ceil{\frac{d}{2}}})$ as a threshold in view of the above formula. For $d = 3$, however,
the numerical results of \cite{Rauhut_Stojanac_2017} suggest that recovery only requires $m\geq O(rn\log(n))$. In fact, based on our numerical results, using Gaussian width to estimate the threshold $m_0$, we conjecture:

\begin{conjecture}\label{conjecture: sufficient number}
    To recover an order-$d$ rank-$r$ tensor of size $n\times\cdots\times n$ with high-probability , 
    $m\sim O(rdn)$ measurements should be sufficient using the nuclear norm or the theta-body relaxations.
\end{conjecture}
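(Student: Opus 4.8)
\emph{Towards a proof of Conjecture \ref{conjecture: sufficient number}.}
The natural route is to translate the recovery guarantee into a Gaussian width bound and then into a dimension count on a normal cone. Fix a tensor $x_0$ of rank at most $r$ and let $\nu$ be either the nuclear norm or one of the theta-body relaxation norms $\nu_k$ of Section \ref{section: geometry}. By the phase-transition analysis for Gaussian measurement ensembles (Chandrasekaran et al.\ \cite{Chandrasekaran_Recht_Parrilo_Willsky_2012}, Tropp \cite{Tropp_2015}, see also \cite{Foucart_Rauhut_2013}), minimizing $\nu$ subject to \eqref{problem: ;low rank tensor} recovers $x_0$ with probability at least $1-\epsilon$ as soon as $m \ge \delta(\mathcal{D}) + c\sqrt{\log(1/\epsilon)}$, where $\mathcal{D}$ is the descent cone of $\nu$ at $x_0$ and $\delta(\mathcal{D})$ is its statistical dimension, which agrees with the squared Gaussian width of $\mathcal{D}$ restricted to the unit sphere up to an additive constant. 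Thus it suffices to show $\delta(\mathcal{D}) = O(rdn)$, uniformly over $x_0$ of rank $\le r$ and, for the relaxations, over the level $k$.

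For the bound on $\delta(\mathcal{D})$ one uses the polarity estimate $\delta(\mathcal{D}) = \mathbb{E}\,\mathrm{dist}(g,\mathcal{D}^{\circ})^2 \le \min_{\tau\ge 0}\mathbb{E}\,\mathrm{dist}\bigl(g,\tau\,\partial\nu(x_0)\bigr)^2$ with $g$ a standard Gaussian tensor, reducing everything to the subdifferential $\partial\nu(x_0)$ at a low-rank point. For the genuine nuclear norm I would generalize the matrix construction: take a minimal, nuclear-norm-attaining decomposition $x_0=\sum_{i=1}^{r}\sigma_i\,u_i^{(1)}\otimes\cdots\otimes u_i^{(d)}$, let $T$ be the span of all tensors obtained by replacing one factor $u_i^{(j)}$ by an arbitrary vector (the tangent space of the bounded-rank variety, with $\dim T = O(rdn)$), and exhibit a fixed $y_0\in\partial\nu(x_0)$ lying in $T$ --- a tensor dual certificate playing the role of $UV^{\top}$ --- together with the inclusion $\partial\nu(x_0)\supseteq (y_0 + T^{\perp})\cap\{z:\|z\|_{\sigma}\le 1\}$, where $\|\cdot\|_{\sigma}$ is the injective (spectral) tensor norm dual to $\nu$. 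Splitting $g = P_T g + P_{T^{\perp}}g$ and estimating the distance to $\tau\partial\nu(x_0)$ by the distance of $P_T g$ to $\tau y_0$ plus that of $P_{T^{\perp}}g$ to the spectral-norm ball then yields $\delta(\mathcal{D}) \le \dim T + \tau^{2}\|y_0\|^{2} + \mathbb{E}\bigl[(\|P_{T^{\perp}}g\|_{\sigma}-\tau)_+^{2}\bigr]$, i.e.\ $O(rdn)$ plus a tail term to be controlled by the choice of $\tau$.

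For the theta-body norms $\nu_k$ I would replace the explicit certificate by the complementary-slackness description of the optimal moment matrix in the semidefinite representation obtained from the reduced Gröbner bases of Sections \ref{section: algebraic-description} and \ref{section: Gröbner bases proofs} (cf.\ \cite{Blekherman_Parrilo_Thomas_2012}). Theorems \ref{theorem: symmetry} and \ref{theorem: extreme point} are the key inputs here: by Theorem \ref{theorem: symmetry} we may move $x_0$ to a canonical position, and by Theorem \ref{theorem: extreme point} the rank-one tensors stay extreme after relaxation, so the rank-one building blocks $u_i^{(1)}\otimes\cdots\otimes u_i^{(d)}$ remain exposed and the certificate $y_0$ still lies on $T$; the ambient semidefinite program only enlarges the admissible slack in $T^{\perp}$ in a manner still governed by a bounded spectral-type norm. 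Carrying this through would give $\delta(\mathcal{D}_{\nu_k}) = O(rdn)$ with constants independent of $k$, and combined with the finite convergence of the hierarchy for $p\in\{1,\infty\}$ this would also settle the nuclear-norm case.

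The main obstacle is the tail term $\mathbb{E}\bigl[(\|P_{T^{\perp}}g\|_{\sigma}-\tau)_+^{2}\bigr]$. For matrices ($d=2$) this is $O(1)$: the operator norm of a Gaussian matrix concentrates at $\Theta(\sqrt{n})$ and a single rescaling absorbs it. For order $d\ge 3$ the injective norm of a Gaussian $n\times\cdots\times n$ tensor is of order $\sqrt{n}$ after the same normalization, and if one allows the slack ball to be all of $\{z\in T^{\perp}:\|z\|_{\sigma}\le 1\}$ this reproduces exactly the pessimistic $O(n^{\lceil d/2\rceil})$ estimate coming from matricization. The way out we expect is that $\partial\nu(x_0)$ is in fact far thinner than that full ball --- the genuine tensor nuclear norm constrains a certificate much more tightly than any matricization does --- so the relevant quantity is the spectral norm not of a generic Gaussian tensor but of its compression to a structured, essentially low-dimensional family of slack directions, which should again concentrate near a constant. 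Making this precise, i.e.\ determining $\partial\nu(x_0)$ and its moment-matrix analogue sharply enough to run the comparison, is the crux; it is also where the logarithmic factor observed for $d=3$ in \cite{Rauhut_Stojanac_2017} would appear. A realistic intermediate goal is the weaker bound $m = O\bigl(rdn\cdot\mathrm{polylog}(n)\bigr)$, or the conjecture for the relaxations $\nu_k$ alone, where the semidefinite structure offers additional leverage.
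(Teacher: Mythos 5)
Be aware that the statement you are addressing is a conjecture, and the paper does not prove it either: its entire support is the reduction carried out in Section \ref{section: computation-Gaussian-width} for the rank-one, theta-1 case --- bounding the squared Gaussian width of the descent cone by the expected squared distance to the normal cone, describing $N(x_0)$ of $\Thetabody{1}(I_2)$ through the convex quadrics of Theorem \ref{Theorem: theta1-convex-quadrics} and the Gr\"obner basis (Lemma \ref{lemma: Nx-I2}), showing that the $O(dn)$ coordinates indexed by $\Ical_0$ vanish on the normal cone, and then estimating the residual term $\Exp{\gamma_{N_\Ical}(g_\Ical)^2}{g}$ \emph{numerically} via the SDP membership test for $\Sigma_1(I)$. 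Your first step (statistical dimension, polarity, distance to a scaled subdifferential) coincides with the paper's framework, but from there you take a genuinely different route: a tangent-space/dual-certificate argument modeled on the matrix case, applied to the genuine nuclear norm and its injective dual, rather than the paper's explicit normal-cone computation for the theta-1 body.

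The proposal, however, does not close the conjecture, and the gaps are concrete. (i) The inclusion $\partial\nu(x_0)\supseteq (y_0+T^{\perp})\cap\{z:\|z\|_{\sigma}\le 1\}$ is a matrix fact that rests on the orthogonality of the singular value decomposition; for $d\ge 3$ a nuclear-norm-attaining decomposition need not be orthogonal, no analogue of $UV^{\top}$ satisfying $\|y_0+W\|_{\sigma}\le 1$ for all bounded $W\in T^{\perp}$ is known, and since the polarity bound needs a large \emph{subset} of the subdifferential, this is exactly the direction one cannot dispense with. (ii) Even granting it, you correctly identify that the tail term $\mathbb{E}\bigl[(\|P_{T^{\perp}}g\|_{\sigma}-\tau)_+^{2}\bigr]$ is uncontrolled; this is the same point at which the paper abandons analysis for numerics, so the argument terminates before the statement is reached. (iii) Theorems \ref{theorem: symmetry} and \ref{theorem: extreme point} do not carry the load you assign them: they give invariance under the symmetry group and that variety points remain extreme, but nothing about exposedness of faces or the shape of $\partial\nu_k$ at $x_0$, so they cannot replace a dual certificate for the relaxations. (iv) The closing remark that finite convergence for $p\in\{1,\infty\}$ would ``settle the nuclear-norm case'' is incorrect --- the nuclear norm is the $p=2$ case, for which the paper explicitly states that theta-exactness is open. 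Finally, neither your sketch nor the paper addresses rank $r>1$; the paper's computation is strictly rank one.
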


%The paper is organized as follows.
%In section \ref{section: preliminaries},
%we first recall some basics about convex geometry, semidefinite program and moment matrix.
%Then we give the construction of theta bodies and nuclear-$p$ norms.
%Section \ref{section: Gröbner} is devoted to finding the ideals $I_p$ such that
%$B_p = \conv(\Vcal_\R(I_p))$ and proving a Gröbner basis of them.
%Furthermore, the semidefinite representation of theta bodies is given in the end of this section.
%In section \ref{section: geometry},
%we investigate the geometric properties of theta bodies
%and answer theorem \ref{theorem: symmetry}-\ref{theorem: extreme point}.
%Finally in section \ref{section: sufficient number},
%we come to conjecture \ref{conjecture: sufficient number}.
%We first explain the method of Gaussian width and how this concept relates to 
%the lower bound of the sufficient number of measurements $m_0$.
%Then we illustrate our computation of the Gaussian width 
%and present our numerical results on the estimation of the sufficient number of measurements
%and comparison of different theta bodies.

\noindent 
\textbf{Acknowledgements}:
The authors acknowledge funding by the Deutsche Forschungsgemeinschaft (DFG, German Research Foundation) - project number 442047500 - through the Collaborative Research Center ``Sparsity and Singular Structures'' (SFB 1481).

\numberwithin{theorem}{section}
\numberwithin{equation}{section}
\section{Preliminaries}\label{section: preliminaries}
We recall and explain some basic concepts in convex geometry and semi-definite programming.
Then we introduce the construction of theta bodies and nuclear $p$-norms.

\subsection{Notation}
    We refer $m$ to the number of measurements.
    In general discussions independent of the tensor structure, 
    such as the space $\R^n$ and the polynomial ring $\R[X] = \R[x_1,...,x_n]$,
    we refer $n$ to the dimension or the number of variables.
    In cases of tensors, $d$ represents the order of the tensor;
    the bold $\nbf = (n_1,...,n_d)\in \N^d$ stands for a tuple of dimensions;
    we write $[n_i] = \set{1,...,n_i}$ and $[\nbf] = [n_1]\odots{\times}[n_d]$.
    $\R^\nbf = \R^{n_1\times\cdots\times n_d}\cong \R^{n_1}\otimes\cdots\otimes\R^{n_d}
    \cong\R^N$ where $N:= n_1\cdots n_d$.
    For varieties, we write $\Vcal$ or $\Vcal_\R$ for a real variety in the space $\R^n$ or $\R^N$
    and $\Vcal_\C$ represents the complex variety in the complex space $\C^n$ or $\C^N$.

\subsubsection*{Convex Geometry}
\begin{definition}
\quad
    \begin{itemize}
        \item A set $C\subset \R^n$ is called a \emph{convex set} if $C$ is closed under convex combinations, i.e. if $x,y\in C$, then $\forall \lambda\in[0,1],\lambda x+ (1-\lambda) y\in C $. A convex set $K$ is called a \emph{(convex) cone} if it is closed under multiplication with positive scalars, i.e. if $x\in K$ then $\forall r>0, rx\in K$.
        We assume our cones contain the origin.
        \item The \emph{dual set of a convex set} $C\in\R^n$ is defined as
        $C^*:= \set{y\in\R^n: \inner{y}{x}\geq -1, \forall x\in C}$,
        which is convex and closed. For a cone $K$, its dual set is
        $K^* = \set{y\in\R^n: \inner{y}{x}\geq 0, \forall x\in K}$,  
        which is also a cone, called the \emph{dual cone}.
        \item We call a convex set $C$ a \emph{convex body}, if $\interior(C)\not = \emptyset$.
        We also say that $C$ is full-dimensional.
        \item Let $S\subset \R^n$ be a subset. 
        The \emph{convex hull} of $S$ is the set of all convex combinations of points in $S$, i.e.
        \[
        \conv(S) = \left\{\sum_{i=1}^l \lambda_i x_i: \lambda_i \geq 0, \sum_i\lambda_i = 1, x_i\in S \right\}.
        \]
    \end{itemize}
\end{definition}

\iffalse
\begin{remark}\label{remark: polar cone}
   Some literature may use the \emph{negative dual} or \emph{polar} of $C$
    with notation $C^\circ:= -C^* = \set{y\in\R^n: \inner{y}{x}\leq 1, \forall x\in C}$.
\end{remark}
\fi

\begin{remark}
    We use the same notation for both dual set and 
    dual space $V^*$, the space of linear functions from $V$ to $\R$.
    For a vector space $V$, 
    $V^*$ will always stand for its dual space. 
    This is because the dual cone of $V$ is the trivial cone $\set{0}$.
\end{remark}
\begin{remark}
    It is also important to remark on the ambient space of the dual operator. 
    For instance, the interval $C = [0,\infty) \in \R$. 
    In $\R$, its dual is itself. 
    However when $C$ is embedded into $\R^2$ with $C = \set{(x,0):x\in[0,\infty)}$,
    its dual becomes $[0,\infty)\oplus \R$.
\end{remark}
The well-known representation theorem of convex sets will play a crucial role
in the construction of theta bodies.
\begin{theorem}\cite[Theorem 11.5]{Tyrrel_Rockerfellar_1997}\label{theorem: half-space-represetation}
    A closed convex set is the intersection of the closed half spaces containing it.
    Moreover, if $C = \cl(\conv(S))$, then the half spaces are exactly those containing $S$.
\end{theorem}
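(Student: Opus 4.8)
The plan is to deduce the statement from the \emph{separating hyperplane theorem} for a point and a nonempty closed convex set, which I would treat as the one genuinely analytic ingredient; everything else is bookkeeping with inclusions. The inclusion of $C$ into the intersection of all closed half-spaces containing it is immediate, so the whole content lies in the reverse inclusion. First I would dispose of the degenerate cases: if $C = \R^n$ the intersection is over the empty family and equals $\R^n$ by convention, and if $C = \emptyset$ then two disjoint parallel closed half-spaces already have empty intersection. So assume $C$ is a nonempty, proper, closed convex subset.

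For the reverse inclusion I would argue by contraposition. Fix $x_0 \notin C$. Since $C$ is closed, convex, and nonempty, the separating hyperplane theorem produces $y \in \R^n\setminus\{0\}$ and $\alpha \in \R$ with $\langle y, x\rangle \ge \alpha$ for every $x \in C$ and $\langle y, x_0\rangle < \alpha$. Then $H_0 := \{x : \langle y, x\rangle \ge \alpha\}$ is a closed half-space with $C \subseteq H_0$ but $x_0 \notin H_0$, so $x_0$ lies outside the intersection of all closed half-spaces containing $C$. Hence that intersection is contained in $C$, giving equality. For the ``moreover'' clause I would use that a closed half-space is itself closed and convex: if $S \subseteq H$, convexity gives $\conv(S) \subseteq H$ and closedness gives $C = \cl(\conv(S)) \subseteq H$; conversely $S \subseteq \conv(S) \subseteq C$, so $C \subseteq H$ forces $S \subseteq H$. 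Thus the family of closed half-spaces containing $S$ coincides with the family containing $C$, and the first part applies verbatim.

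The main obstacle is the separating hyperplane theorem itself. I would obtain it from the metric projection: for $x_0 \notin C$ let $p \in C$ minimize $\|x - x_0\|$ over $x \in C$ (existence from closedness plus coercivity, uniqueness from strict convexity of the Euclidean norm), put $y := p - x_0 \ne 0$, and verify by a first-order convexity argument that $\langle y, x - p\rangle \ge 0$ for all $x \in C$. Taking $\alpha := \langle y, p\rangle$ then separates $x_0$ strictly from $C$, since $\langle y, x_0\rangle = \langle y, p\rangle - \|y\|^2 < \alpha$. Since the paper cites Rockafellar for the representation theorem, I would only indicate this dependency chain rather than reprove the projection lemma in full detail.
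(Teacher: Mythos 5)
Your argument is correct and is the standard one: the paper does not prove this statement but simply cites \cite[Theorem 11.5]{Tyrrel_Rockerfellar_1997}, and your reduction to the strict separating hyperplane theorem (itself obtained from the metric projection onto a nonempty closed convex set), together with the observation that a closed half-space contains $S$ if and only if it contains $\cl(\conv(S))$, is exactly how this result is established in the textbook literature. Your handling of the degenerate cases $C=\R^n$ and $C=\emptyset$ is also correct, so there is nothing to add.
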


\subsubsection*{Semidefinite Program and Spectrahedron}

Semidefinite programming (SDP) serves as a powerful tool for approximating NP-hard optimization problems\cite[Section 2]{Blekherman_Parrilo_Thomas_2012}.
In particular, 
the Lasserre hierarchy (\cite{Lasserre_2001}\cite{Netzer_Plaumann_2023}\cite{Nie_2023}) generates a series 
of SDP relaxations for polynomial optimization problems defined
over semialgebraic set.

\begin{definition}[Spectrahedron and semidefinite program]\label{definition: spectrahedron}
    \quad
    \begin{itemize}
        \item A \emph{spectrahedron} $S$ is the intersection of an affine space and the cone of positive semidefinite matrices,
        i.e. for some $n, m\in\N_+$
        \begin{equation}
            S:= \set{x\in \R^n: A_0+\sum_{i=1}^n x_i A_i\succeq 0, A_i\in\R^{m\times m}\text{ symmetric}},
        \end{equation}
        where $A\succeq 0$ means \emph{positive semidefinite (p.s.d)}.
        \item A \emph{projected spectrahedron} or \emph{spectrahedral shadow} 
        is the projection of a spectrahedron onto a lower dimensional space.
        \item A \emph{semidefinite program(SDP)} is an optimization problem of a linear form over a projected spectrahedron,
        i.e.
        \[
        \min \inner{c}{x}\quad \text{subject to}\quad x\in S,
        \]
        where $c\in\R^n$ and $S\subset \R^n$ is a spectrahedral shadow.
    \end{itemize}
\end{definition}

\subsubsection*{Moment Sequence and Moment Matrix}

The concepts of moment sequences and moment matrices arise from the moment problem 
and are essential tools for analyzing positive polynomials and sums of squares.
For a comprehensive survey, please see \cite{Laurent_2009}\cite{Marshall_2008}\cite{Schmüdgen_2017}\cite{Nie_2023}.
A \emph{moment sequence} $l$ is defined as a linear functional of the polynomial ring $\R[X] = \R[x_1,...,x_n]$,
i.e. $l\in \R[X]^*$. Typically, we index $l$ by monomials. 
Namely, let $\alpha = (\alpha_1,...,\alpha_n)\in \N^n$, 
then we define $l_\alpha = l(x^\alpha) = l(\prod x_i^{\alpha_i})$ to represent the values on moments.
The \emph{moment matrix} is the infinite matrix by regarding $l$ as a bilinear form in the following manner,
\begin{equation}\label{definition: moment-matrix-bilinear-form}
    l(f,g) = l(fg), f,g\in\R[x].
\end{equation}
By default, we fix the basis of monomials for the $\R$-vector space $\R[X]$,
i.e. $\R[X] = \Span(\set{x^\alpha, \alpha\in \N^d})$.
Then the moment matrix $M(l)$ corresponding to $l$ is then defined enterwise as follow,
\[
M(l)_{\alpha, \beta} = l(x^\alpha, x^\beta) = l(x^{\alpha+\beta}).
\]
It is straightforward to see $M_{\alpha, \beta} = M_{\gamma, \iota}$ if $\alpha+\beta = \gamma+\iota$ for a moment matrix $M$.
Additionally, $M$ is symmetric.
A \emph{truncated moment sequence} refers to a linear functional on $\R[X]_{\leq t}$, 
which consists of polynomials of degree up to $t$. 
A \emph{truncated moment matrix} $M(l)$ over the space $\R[X]_{\leq t}$ is defined similarly 
by a truncated moment sequence $l\in\R[X]_{\leq 2t}^*$.

This paper focuses on the quotient algebra $\R[X]/I$ where $I\subset \R[X]$ is an ideal.
The moment sequence is then an element of $(\R[X]/I)^*$.
For $l\in (\R[X]/I)^*$, one can embed it into $\R[X]^*$ by setting $l(f) = 0$ for all $f\in I$.
The moment matrix is constructed analogously. 
The monomial basis can be derived from the theory of Gröbner basis\cite{Cox_Little_O’Shea_2015}.
Thus, we can index the moment matrix on this monomial basis. 
For truncated ones, we define the \emph{degree of polynomials in $\R[X]/I$} by
\[
\deg(f+I) = \min\set{\deg{h}: f\equiv h \mod I}.
\]
\subsection{Theta Bodies}   

Now, we are ready to discuss the theta bodies
developed in \cite{Gouveia_Parrilo_Thomas_2010}.
This approach generates a sequence of relaxations of the closed convex hull of an algebraic variety,
i.e. 
\[
    B = \overline{\conv(\Vcal_{\R}(I))}  
\]
for some ideal $I\subset\R[X]$.
Here, $\Vcal_{\R}(I)$ denotes the \emph{real algebraic variety}, which is the real zero locus of the ideal.
In this article, we omit the subscript and simply use $\Vcal$ instead of $\Vcal_\R$.
It is well known from theorem \ref{theorem: half-space-represetation} that the closed convex hull of $\Vcal(I)$
is exactly the intersection of closed half spaces containing $\Vcal(I)$, i.e.,
\[
    B = \cap_{f\text{ linear, non-negative on }\Vcal(I)}
    \{p\in\R^n: f(p)\geq 0\}.
\]
By relaxing the non-negativity condition of $f$ over $\Vcal(I)$ 
to an expression of sum of squares of degree at most $k$ modulo $I$,
we obtain a hierarchy of convex bodies known as theta bodies. 
We refer to such polynomials $f$ as \emph{$k$-sos modulo $I$}, namely
\[
    f \equiv \sum_{i=1}^l h_i^2 \mod I    
\]
where each $h_i\in\R[x_1,...,x_n]$ is a polynomial of degree at most $k$. 
We denote $V_k = (\R[X]/I)_{\leq k}:= \set{f+I\in \R[X]/I: \deg(f+I)\leq k}$
and $\Sigma_k(I):=\set{f+I: f\text{ is k-sos modulo } I}$.
The \emph{upper theta-k body of} $I$ is then defined as
\begin{equation}
        \Thetabody{k}(I) =
    \{p\in\R^n: f(p)\geq 0, \forall f\in \Sigma_k(I)\cap V_1\}.
\end{equation}

For better notation, we specify an embedding that will be frequently used in this paper.
Consider the following maps between convex sets in $\R^n$ and convex cones in $\R^{n+1}$
\begin{equation}\label{bijection-convex-cone}
    \begin{aligned}
        \text{Convex sets in }\R^n &\hookrightarrow \text{Convex cones in }\R^{n+1}\\
            C\quad\quad &\longrightarrow \quad \cone\set{(1,x)\in\R^{n+1}: x\in C}=: C^k\\
            K^c:=\set{x: (1,x)\in K} &\longleftarrow \quad K.\\
    \end{aligned}
\end{equation}
It is evident that $(C^k)^c = C$.
These operators are significant as they relate to the normalization of a cone by taking the section
at $x_0 = 1$. For example, when considering the sum of squares,
we may normalize its constant term to be 1 (if it exists). 
In this manner, we can rewrite the upper theta bodies in the following way
\begin{equation}\label{definition: upper theta body}
    \Thetabody{k}(I) =
    \{p\in\R^n: f(p)\geq 0, \forall f\in \Sigma_k(I)\cap V_1\} = ((\Sigma_k(I)\cap V_1)^*)^c.
\end{equation}
Here the dual operator is taken in the dual space $V_1^*$.
And the $^c$ operator means that the functional sends constants to themselves,
i.e. $l(1) = 1$.
The terminology differs from that used in the original work(\cite{Gouveia_Parrilo_Thomas_2010})
and we will explain this later.
Similarly, let $P(I):= \set{f\in\R[X]: f|_{\Vcal(I)}\geq 0}$,
then we can rephrase $B = ((P(I)\cap V_1)^*)^c$.
Now let $\Sigma_k(I)^*$ be the dual of $\Sigma_k(I)\subset V_{2k}$ in $V_{2k}^*$.
Then we restate \cite[Theorem 2.8]{Gouveia_Parrilo_Thomas_2010} as follows

\begin{theorem}\label{Theorem: theta body projective spectrahedra}
    If $\Sigma_k(I)$ is closed, then
    \begin{equation}\label{definition: thetabody}
        \Thetabody{k}(I) = \thetabody{k}(I):= \cl \pi_x((\Sigma_k(I)^*)^c).
    \end{equation}  
    Here the dual operator is taken in the dual space $V_{2k}^*$.
    We refer to this $\thetabody{k}(I)$ as the \emph{theta-k body of} $I$.
\end{theorem}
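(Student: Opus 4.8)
The plan is to verify the two inclusions $\thetabody{k}(I)\subseteq\Thetabody{k}(I)$ and $\Thetabody{k}(I)\subseteq\thetabody{k}(I)$ separately, working from the dual description $\Thetabody{k}(I)=((\Sigma_k(I)\cap V_1)^*)^c$ in \eqref{definition: upper theta body} and calling on the hypothesis that $\Sigma_k(I)$ is closed exactly once, to guarantee $\Sigma_k(I)=\Sigma_k(I)^{**}$. Throughout I would identify the slice $\set{l\in V_1^*:l(1)=1}$ with $\R^n$ via $l\mapsto(l(x_1),\dots,l(x_n))$, so that for $l\in V_{2k}^*$ the point $\pi_x(l)$ is the tuple $(l(x_1),\dots,l(x_n))$ and $(\Sigma_k(I)^*)^c=\set{l\in\Sigma_k(I)^*:l(1)=1}$. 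Since $1=1^2$ is $k$-sos modulo $I$, every $l\in\Sigma_k(I)^*$ has $l(1)\geq0$; and in the main case $\Vcal(I)\neq\emptyset$, each $v\in\Vcal(I)$ gives an evaluation functional $l_v\in\Sigma_k(I)^*$ with $l_v(1)=1$ and $\pi_x(l_v)=v$, so $\pi_x((\Sigma_k(I)^*)^c)\neq\emptyset$. The genuinely degenerate situations---$\Vcal(I)=\emptyset$, where one checks both bodies are empty, and $I$ containing nonzero affine polynomials, where one works inside the affine hull of $\Vcal(I)$, on which $\Thetabody{k}(I)$ is defined---I would dispose of separately; below I stay in the main case (and $k\geq1$).

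For $\thetabody{k}(I)\subseteq\Thetabody{k}(I)$ I would take $p=\pi_x(l)$ with $l\in\Sigma_k(I)^*$ and $l(1)=1$, pick any $f\in\Sigma_k(I)\cap V_1$ with a degree-$\leq1$ representative $f\equiv c_0+\sum_ic_ix_i\bmod I$, and note $f(p)=c_0+\sum_ic_ip_i=c_0\,l(1)+\sum_ic_i\,l(x_i)=l(f)\geq0$, the last step because $l$ is nonnegative on the cone $\Sigma_k(I)$. By \eqref{definition: upper theta body} this puts $p$ in $\Thetabody{k}(I)$, which---being an intersection of closed half-spaces---is closed, so it also contains $\thetabody{k}(I)=\cl\pi_x((\Sigma_k(I)^*)^c)$.

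The reverse inclusion is the heart of the matter and the only place where closedness of $\Sigma_k(I)$ is used; I would argue by contraposition. Given $p\notin\thetabody{k}(I)$, since this is a nonempty closed convex set I can strictly separate $p$ from it: there are $c\in\R^n$ and $\gamma\in\R$ with $\inner{c}{p}<\gamma\leq\inner{c}{q}$ for every $q\in\thetabody{k}(I)$, hence $\inner{c}{\pi_x(l)}\geq\gamma$ for every $l\in\Sigma_k(I)^*$ with $l(1)=1$. Now let $f\in V_1$ be the class of $\sum_ic_ix_i-\gamma$, so $f(p)=\inner{c}{p}-\gamma<0$; it remains to show $f\in\Sigma_k(I)$. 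Since $\Sigma_k(I)$ is a closed convex cone, $\Sigma_k(I)=\Sigma_k(I)^{**}$, so it suffices to check $l(f)\geq0$ for all $l\in\Sigma_k(I)^*$. Writing $s=l(1)\geq0$, one has $l(f)=\inner{c}{\pi_x(l)}-s\gamma$; if $s>0$ then $l/s$ lies in $\Sigma_k(I)^*$ with $(l/s)(1)=1$, so the separating inequality gives $\inner{c}{\pi_x(l)}=s\,\inner{c}{\pi_x(l/s)}\geq s\gamma$; and if $s=0$, fixing any $l_0\in\Sigma_k(I)^*$ with $l_0(1)=1$ one has $l_0+tl\in\Sigma_k(I)^*$ with $(l_0+tl)(1)=1$ for all $t\geq0$, so $\inner{c}{\pi_x(l_0)}+t\,\inner{c}{\pi_x(l)}\geq\gamma$ for all $t\geq0$, forcing $\inner{c}{\pi_x(l)}\geq0$. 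Either way $l(f)\geq0$, hence $f\in\Sigma_k(I)\cap V_1$ with $f(p)<0$, so $p\notin\Thetabody{k}(I)$.

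I expect the \emph{only} real obstacle to be the biduality $\Sigma_k(I)=\Sigma_k(I)^{**}$: this is exactly what the hypothesis buys, and without it the functional produced by separation need only represent an element of $\cl\Sigma_k(I)$ rather than an honest $k$-sos polynomial modulo $I$, so the two bodies can genuinely differ. The same argument can instead be packaged as the conic polarity identity $(\Sigma_k(I)\cap V_1)^*=\cl(\Sigma_k(I)^*+V_1^\perp)$---valid precisely because $\Sigma_k(I)$ is closed and $V_1$ is a subspace with $V_1^\perp=\ker(\pi\colon V_{2k}^*\to V_1^*)$---together with the facts that $\pi$ sends $\cl(\Sigma_k(I)^*+\ker\pi)$ onto $\cl\pi(\Sigma_k(I)^*)$ and that the composite of slicing at $l(1)=1$ with the projection to $\R^n$ is a closed map here (the kernel of that projection meets the recession cone of the slice only at the origin); either route isolates the use of the hypothesis at the same point. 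The routine remaining pieces are the two degenerate cases, the well-definedness of $f(p)$ in the main case (where $1,x_1,\dots,x_n$ are linearly independent modulo $I$), and the side remark that $\Sigma_k(I)^*$ is cut out by positive semidefiniteness of the degree-$k$ moment matrix modulo $I$, which underlies the spectrahedral form of \eqref{definition: thetabody}.
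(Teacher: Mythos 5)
Your argument is essentially correct, and it is worth noting that the paper itself offers no proof of this statement: it is presented as a restatement of \cite[Theorem 2.8]{Gouveia_Parrilo_Thomas_2010}, so the only comparison available is with that reference, whose proof is the same duality argument you give. Your two directions are sound: the easy inclusion $\thetabody{k}(I)\subseteq\Thetabody{k}(I)$ via $f(\pi_x(l))=l(f)\geq 0$ (which, as the paper notes around \eqref{inequality: thetabody-underline}, needs no closedness), and the separation-plus-biduality argument for the reverse, with the hypothesis entering exactly once through $\Sigma_k(I)=\Sigma_k(I)^{**}$; the case split on $s=l(1)$ is handled correctly. One inaccuracy in your treatment of the degenerate cases: it is not true that $\Vcal(I)=\emptyset$ forces both bodies to be empty at a fixed level $k$ (take $I=(x^4+1)\subset\R[x]$ and $k=1$: one checks $-1\notin\Sigma_1(I)$, $\Sigma_1(I)\cap V_1=\set{b\in\R: b\geq 0}$, and both $\Thetabody{1}(I)$ and $\thetabody{1}(I)$ equal all of $\R$). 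The correct dichotomy is on whether the slice $(\Sigma_k(I)^*)^c$ is empty: if it is, then every $l\in\Sigma_k(I)^*$ kills $1$, so biduality gives $-1\in\Sigma_k(I)\cap V_1$ and hence $\Thetabody{k}(I)=\emptyset$ as well; if it is not, your main argument goes through verbatim (your use of $\Vcal(I)\neq\emptyset$ is only to exhibit an $l_0$ with $l_0(1)=1$, and any element of the nonempty slice serves). With that repair the proof is complete and self-contained.
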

In general, we have
\begin{equation}\label{inequality: thetabody-underline}
    \thetabody{k}(I)\subset\Thetabody{k}(I).
\end{equation}
This inclusion is apparent because $\Sigma_k(I)\cap V_1\subset \Sigma_k(I)$ and
the dual operator is inclusive-reversing. 
This theorem is crucial as it allows us to formulate $\thetabody{k}(I)$
as a semidefinite program. 
We assume that $\thetabody{k}(I)$ is symmetric;
more specifically, it defines a norm via the following;
for $x\in\R^n$,
\[
    \norm{x}_{\thetabody{k}(I)} := \inf\{r\geq 0: x\in r\thetabody{k}(I)\}.   
\]

Computing this norm is a semidefinite program. 
First, we argue that $\Sigma_k(I)^*$ is a spectrahedron.
It follows that $\Thetabody{k}(I)$ is a projected spectrahedron.
Indeed, let $S$ be the space of symmetric matrices (truncated moment matrix) on $V_k$ and 
let $S_+$ denote those that are positive semidefinite in $S$.
Then $p\in \Sigma_k(I)^*\subset V_{2k}^*\subset S$ is equivalently saying $p$ is p.s.d as a quadratic form, i.e.
\begin{equation}\label{Remark: spectrahedra of theta bodies}
    \Sigma_k(I)^* = S_+\cap V_{2k}^*.
\end{equation}
Thus, we conclude that $\Sigma_k(I)^*$ is indeed a spectrahedron. 
Therefore, theta norm minimization is a semidefinite program. 
Similarly, the upper theta bodies $\Thetabody{k}(I)$ can be formulated in terms of a semidefinite program.
However, it is more complicated to determine the affine restriction. 
In Section \ref{section: dual body and k-sos}, we will provide the semidefinite representation of $\Sigma_k(I)$.
Moreover, since $\thetabody{k}(I)$ is a weaker relaxation than
$\Thetabody{k}(I)$ due to the inclusion relation mentioned above,
our implementation will focus on $\thetabody{k}(I)$.

In general, for a compact variety, 
Schmuedgen's Positivestellensatz ensures the convergence of theta bodies \cite[Thm 7.32]{Gouveia_Thomas_2012}.

\begin{theorem}\label{Thm: convergence-thetabody}
    If $\Vcal(I)\subset\R^n$ is compact, then $\Thetabody{k}(I)$ converges to $\conv(\Vcal(I))$ for $k \to \infty$. Hence $\thetabody{k}(I)\mathop{\rightarrow}\limits^{k\rightarrow\infty} \conv(\Vcal(I))$ as well.
\end{theorem}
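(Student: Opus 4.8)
Throughout write $B:=\conv(\Vcal(I))$; since $\Vcal(I)$ is compact, $B$ is compact, so $B=\overline{\conv(\Vcal(I))}$. I would first record the elementary facts. Each $\Thetabody{k}(I)$ is closed and convex, being an intersection of closed half-spaces, and the inclusions $\Sigma_k(I)\cap V_1\subseteq\Sigma_{k+1}(I)\cap V_1$ make the sequence $\Thetabody{1}(I)\supseteq\Thetabody{2}(I)\supseteq\cdots$ nested. Also $B\subseteq\Thetabody{k}(I)$ for every $k$: for $f\in\Sigma_k(I)\cap V_1$ and $p\in\Vcal(I)$, writing $f\equiv\sum_i h_i^2\bmod I$ gives $f(p)=\sum_i h_i(p)^2\geq 0$, so the affine function $f$ is nonnegative on $\Vcal(I)$ and hence on $B$. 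Thus the first assertion amounts to the reverse inclusion $\bigcap_k\Thetabody{k}(I)\subseteq B$.

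This reverse inclusion is \cite[Thm.~7.32]{Gouveia_Thomas_2012}; the argument runs as follows. Fix $q\notin B$. Since $B$ is compact and convex, strict separation yields an affine functional $\ell$ with $\ell(q)<0$ and $\ell>0$ on $B$, in particular $\ell>0$ on $\Vcal(I)$. As $\Vcal(I)$ is compact, Schmüdgen's Positivstellensatz — applied to the preordering generated by $\pm$ the generators of $I$, which equals $\Sigma+I$ — gives $\ell\equiv\sum_{i=1}^s h_i^2\bmod I$ for some polynomials $h_i$; with $k:=\max_i\deg h_i$ the affine functional $\ell$ is $k$-sos modulo $I$, so its class lies in $\Sigma_k(I)\cap V_1$, whence $q\notin\Thetabody{k}(I)$. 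Thus $\bigcap_k\Thetabody{k}(I)=B$; since the bodies are nested, closed, and eventually bounded (for $k$ large each facet functional of a bounded polytope whose interior contains $B$ is $k$-sos modulo $I$), this is convergence in the Hausdorff metric.

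For the ``Hence'' I would sandwich $B\subseteq\thetabody{k}(I)\subseteq\Thetabody{k}(I)$. The right-hand inclusion is \eqref{inequality: thetabody-underline}. For the left one, let $p\in\Vcal(I)$ and take the evaluation functional $\mathrm{ev}_p\colon f+I\mapsto f(p)$ on $\R[X]/I$, which is well defined since $p\in\Vcal(I)$. Restricted to $V_{2k}$ it is nonnegative on $\Sigma_k(I)$ (a $k$-sos-modulo-$I$ polynomial is a sum of squares at $p$) and satisfies $\mathrm{ev}_p(1)=1$, so $\mathrm{ev}_p\in(\Sigma_k(I)^*)^c$ with $\pi_x(\mathrm{ev}_p)=p$. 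Hence $\Vcal(I)\subseteq\pi_x((\Sigma_k(I)^*)^c)\subseteq\thetabody{k}(I)$, and since $\thetabody{k}(I)$ is closed and convex, $B\subseteq\thetabody{k}(I)$. Letting $k\to\infty$ in $B\subseteq\thetabody{k}(I)\subseteq\Thetabody{k}(I)$ and using the first assertion squeezes $\thetabody{k}(I)$ to $B=\conv(\Vcal(I))$.

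The only genuinely substantial point is the passage from positivity of $\ell$ on the compact set $\Vcal(I)$ to a sum-of-squares-modulo-$I$ representation of \emph{bounded} degree; this is exactly where compactness is used, through Schmüdgen, and it is what we import from \cite{Gouveia_Thomas_2012}. The separation step, the nesting, the evaluation-functional computation, and the final squeeze are routine; the one minor technicality to watch is the well-definedness of evaluating an element of $V_1$ at an arbitrary point of $\R^n$, which is handled by using its Gröbner normal form.
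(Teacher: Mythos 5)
Your proof is correct and follows exactly the route the paper itself indicates: the paper offers no argument beyond citing Schm\"udgen's Positivstellensatz via \cite[Thm 7.32]{Gouveia_Thomas_2012}, and your reconstruction (separation of a point from the compact hull, Schm\"udgen applied to the preordering $\Sigma+I$ to get a bounded-degree sos certificate, plus the evaluation-functional argument and the sandwich $B\subseteq\thetabody{k}(I)\subseteq\Thetabody{k}(I)$ for the lower bodies) is the standard proof of that cited result. No gaps.
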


\begin{definition}[Theta-exact]
    These concepts describe the finite convergence of the theta body hierarchy.
    \begin{enumerate}
        \item We say $I$ is $\thetabody{k}$-exact or theta-k-exact,
    if $\thetabody{k}(I) = \overline{\conv(\Vcal(I))}$.
    We say $I$ is theta-exact, if there exists $k>0$ such that
    $I$ is theta-k-exact.
        \item Similarly, we say $I$ is $\Thetabody{k}$-exact or upper-theta-k-exact, if $\Thetabody{k}(I) =\overline{\conv(\Vcal(I))}$.
    And we say $I$ is upper-theta-exact, if there exists $k>0$ such that
    $I$ is upper-theta-k-exact.
    \end{enumerate}
    Clearly, upper-$\thetabody{k}$-exactness implies $\thetabody{k}$-exactness.
\end{definition}

In \cite{Gouveia_Thomas_2012} and \cite{Gouveia_Parrilo_Thomas_2010},
the authors describe $\Thetabody{1}(I)$ using convex quadrics.
We say $f\in\R[x_1,...,x_n]$ is a quadric,
if it has the form $f(x) = x^T A x + Bx + c$ with $A\succeq 0$.
\begin{theorem}\label{Theorem: theta1-convex-quadrics}
    Let $I\subset\R^n$ be any ideal, then
    \[
    \Thetabody{1}(I) = \cap_{f\in I\text{, convex quadrics}}\{p\in\R^n: f(p)\leq 0\}.    
    \]
\end{theorem}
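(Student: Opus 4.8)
The plan is to establish the two inclusions separately, with everything resting on one elementary observation: a polynomial is a sum of squares of affine forms if and only if it is a quadric $x^{\top}Ax + Bx + c$ with $A\succeq 0$. This is precisely the bridge between the cone $\Sigma_1(I)\cap V_1$ appearing in the definition of $\Thetabody{1}(I)$ and the convex quadrics contained in $I$.

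For the inclusion $\Thetabody{1}(I) \supseteq \bigcap_{f}\{p: f(p)\le 0\}$ (intersection over convex quadrics $f\in I$), I would take a point $p$ in the right-hand side and an arbitrary element of $\Sigma_1(I)\cap V_1$; since it has degree at most one modulo $I$, I may represent it by an affine polynomial $f$ that is still $1$-sos modulo $I$. Writing $f \equiv \sum_i h_i^2 \bmod I$ with each $h_i = a_i^{\top}x + b_i$ affine, the polynomial $g := \sum_i h_i^2 - f$ lies in $I$, and its degree-two part is the quadratic form of $\sum_i a_i a_i^{\top}\succeq 0$, so $g$ is a convex quadric in $I$. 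The hypothesis on $p$ then yields $g(p)\le 0$, hence $f(p) = \sum_i h_i(p)^2 - g(p) \ge 0$; as $f$ was arbitrary, $p\in\Thetabody{1}(I)$.

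For the reverse inclusion, the key idea is to linearize a given convex quadric $q = x^{\top}Ax + Bx + c\in I$ at the test point $p\in\Thetabody{1}(I)$ itself. Let $\ell(x) := q(p) + \nabla q(p)^{\top}(x-p)$ be the associated tangent affine function; a direct expansion gives $q(x) - \ell(x) = (x-p)^{\top}A(x-p)$, which is a sum of squares of affine forms because $A\succeq 0$ (factor $A$ through a spectral or Cholesky decomposition). Hence $-\ell \equiv (x-p)^{\top}A(x-p)\bmod I$ is an affine polynomial that is $1$-sos modulo $I$, so $-\ell\in\Sigma_1(I)\cap V_1$, and membership of $p$ forces $-\ell(p)\ge 0$. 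Since $\ell(p) = q(p)$, this gives $q(p)\le 0$, as desired.

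I do not anticipate a deep obstacle. The two points needing care are, first, a bookkeeping issue: evaluation of an affine polynomial on $\Thetabody{1}(I)$ does not depend on the chosen representative of its class in $\R[X]/I$, because the zero polynomial is trivially $1$-sos and $I$ is closed under negation, so whenever an affine polynomial lies in $I$ both it and its negative already appear among the constraints (this also covers the degenerate cases where $I$ contains affine polynomials, which are degenerate convex quadrics, keeping both sides of the identity consistent); and second, the one genuine idea, namely that the linearization in the reverse inclusion must be performed at the point $p$ under test, so that the identity $\ell(p) = q(p)$ turns a linear inequality into the sought quadratic one.
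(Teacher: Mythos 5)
Your proof is correct. The paper itself does not prove this theorem---it quotes it from \cite{Gouveia_Parrilo_Thomas_2010} and \cite{Gouveia_Thomas_2012}---and your argument is essentially the standard one from those references: the tangent linearization $\ell$ of a convex quadric at the test point $p$ (so that $-\ell \equiv (x-p)^{\top}A(x-p) \bmod I$ is $1$-sos) for one inclusion, and the extraction of the convex quadric $\sum_i h_i^2 - f \in I$ from a $1$-sos certificate for the other, with the representative-independence remark correctly dispatching the only bookkeeping subtlety.
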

This is valuable because it provides a concrete description of the (upper) theta bodies.
For those ideals $I$ that may not satisfy closedness of $\Sigma_k(I)$,
understanding $\Thetabody{k}(I)$ may still contribute
since $\thetabody{k}(I)\subset \Thetabody{k}(I)$.

\subsection{Nuclear $p$-Norms}\label{section: nuclear-$p$}
Recall a \emph{rank-1 tensor} $X\in\R^{n_1}\otimes\cdots\otimes \R^{n_d}$ has the form
\[
    X = x^{(1)}\otimes\cdots\otimes x^{(d)}
\]
where $x^{(i)}\in\R^{n_i}, i=1,...,d$.
A tensor can be identified as a $d$-array; 
namely, $\R^{n_1}\otimes\cdots\otimes \R^{n_d} \cong \R^{n_1\times....\times n_d}$ via
\[
    (x^{(1)}\otimes\cdots\otimes x^{(d)})[i_1,...,i_d] =  \prod_{k=1}^d x^{(k)}_{i_k}. 
\]
The \emph{nuclear norm} of a tensor $X\in\R^{n_1\times....\times n_d}$ is defined as follows,
\[
\norm{X}_* = \min\left\{\sum_{i=1}^r\norm{X_i}_2: X = \sum_{i=1}^r X_i, X_i\in\R^{n_1\times....\times n_d},
\rank(X_i) = 1 \right\}.    
\]
where $\norm{\cdot}_2$ denotes the $l_2$-norm by regarding the tensor as a $\R^{n_1\cdots n_d}$ vector.
It turns out that the unit ball of the nuclear norm is
the convex hull of the set of rank-1 tensors with unit $l_2$-norm.
That is 
\begin{align*}
    B_2 &= \conv{\{x^{(1)}\otimes...\otimes x^{(d)}\in\Rbb^{n_1\times...\times n_d}: \norm{x^{(i)}}_2 = 1\}}\\
    &= \conv{\{X\in\Rbb^{n_1\times...\times n_d}: \norm{X}_2 = 1, \rank(X) = 1\}}.
\end{align*}
The $l_p$-norm of a tensor is defined analogously as the $l_2$-norm.
The nuclear $p$-norm generalizes the nuclear norm by extending $l_2$-norm to $l_p$ norms.
First of all, we demonstrate that the following two generalizations coincide, 
i.e. $B_p^1 = B_p^2$ defined as follows
\begin{align}\label{Bp}
&B_p^1 = \conv{\{x^{(1)}\otimes...\otimes x^{(d)}\in\Rbb^{n_1\times...\times n_d}: \norm{x^{(i)}}_p = 1\}}\\
&B_p^2 = \conv{\{X\in\Rbb^{n_1\times...\times n_d}: \norm{X}_p = 1, \rank(X) = 1\}}.
\end{align}

\begin{lemma}
    For any natural number $p\geq 1$ and $p=\infty$, $B_p^1 = B_p^2$.
\end{lemma}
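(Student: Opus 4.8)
The plan is to prove the stronger statement that the two \emph{generating sets} coincide as subsets of $\Rbb^{n_1\times\cdots\times n_d}$; equality of the convex hulls $B_p^1 = B_p^2$ then follows immediately by applying $\conv$ to equal sets. The only substantive ingredient is the multiplicativity of the $l_p$-norm on rank-one tensors, namely
\[
    \norm{x^{(1)}\otimes\cdots\otimes x^{(d)}}_p = \prod_{k=1}^d \norm{x^{(k)}}_p
\]
for every $p\in\{1,2,\ldots\}\cup\{\infty\}$. First I would verify this identity: for finite $p$ it is the computation $\norm{x^{(1)}\otimes\cdots\otimes x^{(d)}}_p^p = \sum_{i_1,\ldots,i_d}\prod_{k}|x^{(k)}_{i_k}|^p = \prod_k\sum_{i_k}|x^{(k)}_{i_k}|^p = \prod_k \norm{x^{(k)}}_p^p$, i.e.\ a sum of products rewritten as a product of sums using distributivity; for $p=\infty$ the identical argument works with $\max$ in place of $\sum$.

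With multiplicativity in hand, the inclusion $B_p^1\subseteq B_p^2$ is immediate: if $\norm{x^{(k)}}_p = 1$ for all $k$, then $\norm{x^{(1)}\otimes\cdots\otimes x^{(d)}}_p = 1$, and since each factor is nonzero the tensor has rank exactly one, so it is one of the generators of $B_p^2$. Hence every generator of $B_p^1$ is a generator of $B_p^2$.

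For the reverse inclusion, I would take a rank-one tensor $X$ with $\norm{X}_p = 1$, write $X = y^{(1)}\otimes\cdots\otimes y^{(d)}$, and renormalize each factor. Multiplicativity gives $\prod_k\norm{y^{(k)}}_p = 1$, so in particular every $\norm{y^{(k)}}_p > 0$; setting $x^{(k)} := y^{(k)}/\norm{y^{(k)}}_p$ yields factors of unit $l_p$-norm with $x^{(1)}\otimes\cdots\otimes x^{(d)} = X/\prod_k\norm{y^{(k)}}_p = X$, so $X$ is a generator of $B_p^1$. The two generating sets are therefore literally equal, and taking convex hulls finishes the argument. I do not anticipate a genuine obstacle here; the only two points requiring a line of care are the $p=\infty$ case in the multiplicativity identity and the observation that unit-norm factors force the tensor rank to be exactly one rather than zero.
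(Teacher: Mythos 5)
Your proposal is correct and takes essentially the same route as the paper: both rest on the multiplicativity $\norm{x^{(1)}\otimes\cdots\otimes x^{(d)}}_p = \prod_k\norm{x^{(k)}}_p$ and conclude that the two generating sets coincide, hence so do their convex hulls. You simply spell out the two inclusions (including the renormalization step) that the paper leaves implicit.
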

\begin{proof}
    The key observation is that for any $p\geq 1$,
    \begin{align*}
        \norm{x^{(1)}\otimes...\otimes x^{(d)}}_p^p &= \sum_{i_1,...,i_d}|x^{(1)}_{i_1}...x^{(d)}_{i_d}|^p\\
        &= \sum_{i_1}|x^{(1)}_{i_1}|^p\sum_{i_2}...\sum_{i_d}|x^{(d)}_{i_d}|^p\\
        &= \norm{x^{(1)}}_p^p...\norm{x^{(d)}}_p^p
    \end{align*}
    this induces that the extreme points of $B_p^1$ and $B_p^2$ coincide and so do their convex hull.
    A similar argument works for $p=\infty$.
\end{proof}
We identify both bodies as $B_p$.
The corresponding norm is defined by its gauge function; that is,
\begin{equation}\label{definition: gauge}
        \gamma_C(x):= \inf\set{r>0: x\in rC}.
\end{equation}
We refer them to the \emph{nuclear $p$-norms}\cite{Friedland_Lim_2018}.

\section{Algebraic Description of Nuclear $p$-Norm}
\label{section: Gröbner}
In this section, 
we provide algebraic descriptions of nuclear $p$-norms,
namely we find ideals $I_p$ such that $B_p = \conv(\Vcal_\R(I_p))$ 
for some reasonable values of $p$.
Indeed, we will prove that when $p=1,\infty$ and all positive even numbers,
such ideals exist.

Recall the construction of theta bodies,
\[
\thetabody{k}(I) = \cl \pi_x((\Sigma_k(I)^*)^c).
\]
A functional $p\in\Sigma_k(I)^*\subset V_{2k}^*:=(\R[X]/I)_{\leq 2k}^*$ is also called a truncated moment sequence on $V_{2k}$.
Moreover, by considering it as a bilinear form on $V_k:= (\R[X]/I)_{\leq k}$
in the following manner
\[
p(f+I, g+I) = p(fg+I),
\]
we relate the truncated moment sequence to a (symmetric) truncated moment matrix
once we fix a suitable basis for $V_k$.
The non-negativity of $p$ on $\Sigma_k(I)$
is translated to the positive semidefinite-ness of this moment matrix on $V_k$.
A suitable basis should have the property that 
if $\Bcal_k$ is a basis for $V_k$, 
then $\Bcal_k\cdot \Bcal_k$ spans $V_{2k}$.
The Gröbner basis theory provides us with such a suitable basis by monomials.
Therefore, after establishing the ideals $I_p$, 
we compute their Gröbner basis $\Gcal_p$.
Section \ref{section: intro-Gröbner} is devoted to providing a brief overview on the topic of Gröbner bases; for details, we refer to \cite{Cox_Little_O’Shea_2015}.

For the remainder of the section, we fix the following notation.
Let $d\in \Nbb$ and $\nbf = (n_1, \dots, n_d) \in \Nbb^d$. We write $[\nbf]$ for $[n_1]\times \dots \times [n_d]$, where $[n_i]  = \{1, \dots, n_i\}$. For elements $a,b \in [\nbf]$ we write $a < b$, if $a$ is smaller than $b$ with respect to the lexicographic order. The tensor product $X = \R^{n_1}\odots{\otimes}\R^{n_d}$ comes with a canonical basis $\{ \textbf{e}_a \mid a \in [\nbf] \}$ given by the tensor product of the standard bases of $\Rbb^{n_1}, \dots, \Rbb^{n_d}$. Hence, we can write $x\in X$ uniquely as $x = \sum_{a\in [\nbf]} x_a e_a$ and obtain an isomorphism $X \cong \Rbb^{n_1\cdots n_d}$.
We consider $\Rbb[X]$, the polynomial ring over $\Rbb$ with variables $x_a$ where $a$ ranges over $[\nbf]$. For $\alpha \in \N_0^{[\nbf]}$, we write
\[
    x^\alpha = \prod_{a\in[\nbf]} x_a^{\alpha_a} = x_{1\dots1}^{\alpha_{1\dots1}} \cdots x_{n_1\dots n_d}^{\alpha_{n_1\ldots n_d}}
\]
Note that, a priori, the polynomial ring $\Rbb[X]$ no longer sees the multilinear structure of $X$ and only the linear structure of $\Rbb^{n_1\cdots n_d}$, but it is still possible to describe certain properties, such as being rank 1, by polynomial equations.

\subsection{Algebraic Description}
\label{section: algebraic-description}

Recall the definition of the nuclear $p$-norm through its unit ball $B_p$,
\[
    B_p = 
    \conv{\{X\in\Rbb^{n_1\times...\times n_d}: \norm{X}_p = 1, \rank(X) = 1\}}.
\]

We will begin with an algebraic description of the condition $\rank(X) = 1$. For this, we will need the following notation: Let $a, b\in\N^d$, we write
\[
    a\wedge b = (\min(a_i, b_i))_{i=1,\dots,d},
    \quad
    a\vee b = (\max(a_i, b_i))_{i=1,\dots,d}.
\]

%We assert that for some $p$, the set $\Vcal_p:=\set{X\in\Rbb^{n_1\times...\times n_d}: \norm{X}_p = 1, \rank(X) = 1}$ is a \emph{real variety}, i.e. the real zero locus of some polynomials. 
%To begin with, the rank-1 condition can be defined using binomials. 
%Notably, a tensor has rank 1 if and only if any matricization of it has rank 1
%(\cite{Hackbusch_2019}).
%Then the binomials arise from the vanishing 2-minors.
%Now, we indeed have the following lemma.

\begin{lemma}
    \label{lem-rk-1-eq}
    A nonzero tensor $x\in \R^\nbf$ is rank 1, if and only if $x_a x_b - x_{a\wedge b}x_{a\vee b} = 0$ for all $a, b \in [\nbf]$. 
    In other words, the set of rank-1 tensors is the variety of the ideal
    \[I_0 = \gen{x_a x_b - x_{a\wedge b}x_{a\vee b}, \forall a,b\in[\nbf]}.\]
\end{lemma}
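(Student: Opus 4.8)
The plan is to prove the two implications separately. The forward implication (rank 1 $\Rightarrow$ the binomial relations) is a one-line computation: writing a rank-1 tensor as $x = x^{(1)}\otimes\cdots\otimes x^{(d)}$ with $x^{(k)}\in\R^{n_k}$, we have $x_a = \prod_{k=1}^{d} x^{(k)}_{a_k}$ for all $a\in[\nbf]$, and for any $a,b\in[\nbf]$ and each mode $k$ the pair $\{\min(a_k,b_k),\max(a_k,b_k)\}$ equals $\{a_k,b_k\}$; multiplying over $k$ gives $x_{a\wedge b}\,x_{a\vee b}=x_a x_b$. I would not dwell on this part.

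For the converse, suppose $x\neq 0$ satisfies $x_a x_b = x_{a\wedge b}x_{a\vee b}$ for all $a,b\in[\nbf]$, and fix $c\in[\nbf]$ with $x_c\neq0$. For a mode $k$ and $j\in[n_k]$, let $c^{(k,j)}\in[\nbf]$ be $c$ with its $k$-th coordinate replaced by $j$, and set $u^{(k)}_j:=x_{c^{(k,j)}}\in\R$; since $u^{(k)}_{c_k}=x_c\neq0$, each $u^{(k)}\in\R^{n_k}$ is a nonzero vector. The reconstruction boils down to the identity
\[
    x_a\,x_c^{\,d-1} \;=\; \prod_{k=1}^{d} u^{(k)}_{a_k}\qquad\text{for all }a\in[\nbf],
\]
for once this is known we get $x = x_c^{-(d-1)}\,u^{(1)}\otimes\cdots\otimes u^{(d)}$, a rank-1 tensor, and the proof is finished.

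I would prove this identity by induction on $\delta(a):=\#\{k:a_k\neq c_k\}$, the case $\delta(a)=0$ (that is, $a=c$) being immediate. For $\delta(a)\geq1$, pick a mode $k_0$ with $a_{k_0}\neq c_{k_0}$, let $a'$ be $a$ with its $k_0$-th coordinate reset to $c_{k_0}$ (so $\delta(a')=\delta(a)-1$), and put $\tilde c:=c^{(k_0,a_{k_0})}$. A coordinatewise check shows that the pairs $(a,c)$ and $(a',\tilde c)$ have the same componentwise minimum and maximum, namely $a\wedge c$ and $a\vee c$. Applying the given relation to both pairs therefore yields
\[
    x_a x_c \;=\; x_{a\wedge c}\,x_{a\vee c} \;=\; x_{a'\wedge\tilde c}\,x_{a'\vee\tilde c} \;=\; x_{a'}\,x_{\tilde c} \;=\; x_{a'}\,u^{(k_0)}_{a_{k_0}} .
\]
Meanwhile, the inductive hypothesis applied to $a'$ gives $x_{a'}\,x_c^{\,d-1} = \prod_{k} u^{(k)}_{a'_k} = x_c\prod_{k\neq k_0}u^{(k)}_{a_k}$ (using $a'_{k_0}=c_{k_0}$ and $a'_k=a_k$ otherwise), so that $x_{a'}\,x_c^{\,d-2}=\prod_{k\neq k_0}u^{(k)}_{a_k}$ after cancelling $x_c\neq0$ (legitimate, as these are real numbers). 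Combining the two displays,
\[
    x_a\,x_c^{\,d-1} \;=\; (x_a x_c)\,x_c^{\,d-2} \;=\; u^{(k_0)}_{a_{k_0}}\bigl(x_{a'}\,x_c^{\,d-2}\bigr) \;=\; \prod_{k=1}^{d}u^{(k)}_{a_k},
\]
which is the identity for $a$, closing the induction.

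The main obstacle is exactly the middle display: the identity $x_a x_c = x_{a'}x_{\tilde c}$ needed to run the induction is \emph{not} one of the generating binomials of $I_0$, yet it holds because both index pairs sort componentwise to $(a\wedge c,\,a\vee c)$, so it factors as two applications of the generators. Recognizing this ``two generators chained by a common componentwise sort'' is the one genuine idea; the rest is bookkeeping, and the statement is vacuous for $d=1$, where $I_0=0$.
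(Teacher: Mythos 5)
Your proof is correct and follows essentially the same route as the paper's: both construct the candidate factors $u^{(k)}$ as slices of $x$ through the nonzero entry and then verify $x_a x_c^{d-1}=\prod_k u^{(k)}_{a_k}$ by repeatedly chaining two instances of the generating relation whose index pairs share the same componentwise sort. The only difference is bookkeeping — you induct on the number of coordinates where $a$ differs from $c$, while the paper folds the product left to right over the modes — and your write-up makes the ``iterating the argument'' step of the paper fully explicit.
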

\begin{proof}
    First, assume that $x$ is rank 1, i.e. $x = x^{(1)} \otimes \dots \otimes x^{(d)}$. Then by definition
    \[
        x_{a\wedge b} = x^{(1)}_{\min\{a_1, b_1\}} \cdots x^{(d)}_{\min\{a_d, b_d\}} \text{ and } x_{a\vee b} = x^{(1)}_{\max\{a_1, b_1\}} \cdots x^{(d)}_{\max\{a_d, b_d\}},
    \]
    thus $x_a x_b - x_{a\wedge b} x_{a\vee b} = 0$.

    To show that $x$ is rank 1 if $x_a x_b - x_{a\wedge b}x_{a\vee b} = 0$, it is sufficient to find $x^{(1)} \in \Rbb^{\nbf_1}, \dots, x^{(d)} \in \Rbb^{\nbf_d}$, such that
    \[
        x_a = (x^{(1)} \otimes \dots \otimes x^{(d)})_a
    \]
    for all $a\in [n]$. Since $x \neq 0$, there exists some $s\in [n]$ with $x_s \neq 0$. Put $x^{(1)} = (x_{1 s_2\dots s_d}, \dots, x_{n_1 s_2\dots s_d})$ and $x^{(i)} = (x_{s_1\dots 1\dots s_d}/x_s, \dots, x_{s_1\dots n_i\dots s_d}/x_s)$ for $i > 1$. Now
    \[
        (x^{(1)} \otimes \dots \otimes x^{(d)})_a = x^{(1)}_{a_1} \cdots x^{(d)}_{a_d} = (x_s)^{-d+1} x_{a_1 s_2\dots s_d} \cdots x_{s_1\dots s_{d-1} a_d}.
    \]
    By assumption, we have the identity
    \[
        x_{a_1 s_2\dots s_d} x_{s_1 a_2\dots s_d} = x_{s_1 s_2\dots s_d} x_{a_1 a_2\dots s_d} = x_s  x_{a_1 a_2\dots s_d}.
    \]
    Thus, the above expression reduces to $(x_s)^{-d+2} x_{a_1 a_2\dots s_d} \dots x_{s_1\dots s_{d-1} a_d}$. Iterating the argument, we obtain the desired identity.
\end{proof}

Now we proceed to find the defining polynomials of the extreme points for the condition $\norm{X}_p = 1$.

For $p = 1$, the extremal points are the standard basis vectors $e_a$. It follows that every coordinate is in $\{ -1, 0, 1 \}$, giving the equations $x_a^3 - x_a = 0$ for all $a\in [\nbf]$. Additionally, only one coordinate should not vanish, so $x_a x_b = 0$ for $a \neq b$, and to ensure that exactly one entry is nonzero we require $\sum_{a\in \nbf} x_a^2 - 1$. Altogether we find
\begin{equation}
    I_1 = \gen{ x_a^3 - x_a, x_a x_b, \sum\nolimits_{a\in [\nbf]} x_a^2 - 1}
\end{equation}

If $2 \le p < \infty$ is an even number, the defining polynomial is clear since $\norm{X}_p = 1$ is equivalent to $\sum_{a\in[\nbf]}x_a^{p}-1 = 0$. We obtain
\begin{equation}\label{definition: Ip}
    I_p = \gen{\sum_{a\in[\nbf]}x_a^{p}-1, x_a x_b - x_{a\wedge b} x_{a\vee b}, \forall a,b\in[\nbf]}.
\end{equation}
For odd numbers, $I_p$ is not the desired ideal since 
$\Vcal_\R(I_p)$ in this case is unbounded.

For $p = \infty$, every coordinate of the extreme points is either $-1$ or $+1$ giving us
\begin{equation}
    I_\infty = \gen{x_a^2-1, x_a x_b - x_{a\wedge b} x_{a\vee b}, \forall a,b\in[\nbf]}.
\end{equation}

\subsection{An Introduction to Gröbner Bases}\label{section: intro-Gröbner}
We first need to fix a total order on the monomials. Here we use the \emph{graded reverese lexicographic order}, \emph{grevlex} for short (sometimes also \emph{degrevlex}), where $x^\alpha > x^\beta$ if the first nonzero entry from the right of $\alpha - \beta$ is negative. In our case, this means $x_{1\dots 11} > x_{1\dots 12} > \dots > x_{21\dots 1} > \dots > x_{\nbf_1\dots \nbf_d}$. With respect to such an ordering, for a nonzero polynomial $f = \sum a_\alpha x^{\alpha}$ we have its
\begin{itemize}
    \item multidegree $\multideg(f) = \max\{ \alpha \mid a_\alpha \neq 0\}$,
    \item leading coefficient $\LC(f) = a_{\multideg(f)}$,
    \item leading monomial $\LM(f) = x^{\multideg(f)}$ and
    \item leading term $\LT(f) = \LC(f)\LM(f)$.
\end{itemize}

Given an ordered set $\Gcal = \{ g_1, \dots, g_n \} \subset \Rbb[X]$, every $f\in \Rbb$ can be written as
\[
    f = c_1 g_1 + \ldots + c_n g_n + r
\]
with $c_1, \dots, c_n, r\in \Rbb[X]$ and $\multideg(r) < \multideg(g_i)$ using a division algorithm. We say $f$ reduces to $r$ modulo $\Gcal$, $f \rightarrow_\Gcal r$.

In general, the remainder $r$ depends on the ordering of $g_1, \dots, g_n$, in particular if $\Gcal$ is an arbitrary generating set of some ideal $I \subset \Rbb[X]$, it cannot be used in conjunction with the division algorithm to determine ideal membership.

The set $\Gcal$ is called Gröbner basis of $I$ (with respect to the monomial ordering) if $(\LM(\Gcal)) = (\LM(I))$. In this case, it has the important property that $r$ is unique, in the sense that it no longer depends on the order of $\Gcal$. Hence, for a polynomial $f\in \Rbb[X]$ we have $f\in I$ if and only if $r$ is zero, i.e. $f \rightarrow_\Gcal 0$.

To determine whether $\Gcal$ is a Gröbner basis can be done via $S$-polynomials. The $S$-polynomial of $f_1, f_2\in \Rbb[X]$ is defined as
\[
    S(f_1, f_2) = \frac{\lcm(\LT(f_1), \LT(f_2))}{\LT(f_1)} f_1 - \frac{\lcm(\LT(f_1), \LT(f_2))}{\LT(f_2)} f_2
\] 

\begin{theorem}[Buchberger's Criterion]
    \label{thm-buchberger-criterion}
    A subset $\Gcal = \{ g_1, \dots, g_n \}$ of an ideal $I\subset \Rbb[X]$ is a Gröbner basis of $I$ if and only if $\Gcal$ generates $I$ and
    \[
        S(g_i, g_j) \rightarrow_\Gcal 0
    \]
    for all $g_i, g_j \in \Gcal$.
\end{theorem}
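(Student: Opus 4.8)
The plan is to prove the two implications separately: the forward direction is essentially bookkeeping with the division algorithm, while the converse is the standard descent argument on representations $f = \sum_i h_i g_i$.

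For the forward direction, suppose $\Gcal$ is a Gröbner basis of $I$, i.e. $(\LM(\Gcal)) = (\LM(I))$. First I would record the auxiliary fact that any $\Gcal \subseteq I$ with $(\LM(\Gcal)) = (\LM(I))$ automatically generates $I$: given $f\in I$, the division algorithm gives $f = \sum_i c_i g_i + r$ with no term of $r$ divisible by any $\LM(g_i)$; since $r = f - \sum_i c_i g_i \in I$, a nonzero $r$ would have $\LM(r)\in(\LM(I)) = (\LM(\Gcal))$, hence divisible by some $\LM(g_i)$, a contradiction, so $r = 0$ and $f\in\gen{\Gcal}$. Applying the division algorithm to $f = S(g_i, g_j)\in I$ then yields $S(g_i, g_j)\rightarrow_\Gcal 0$.

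For the converse, assume $\Gcal$ generates $I$ and $S(g_i, g_j)\rightarrow_\Gcal 0$ for all $i,j$. Since trivially $(\LM(\Gcal))\subseteq(\LM(I))$, it suffices to show every nonzero $f\in I$ has $\LM(g_i)\mid\LM(f)$ for some $i$. Write $f = \sum_i h_i g_i$ and, among all such representations, choose one minimizing $\delta := \max_i \multideg(h_i g_i)$ in the monomial order; note $\multideg(f)\le\delta$. If $\multideg(f) = \delta$, the degree-$\delta$ terms of $\sum_i h_i g_i$ cannot all cancel, so $\LT(f) = \sum_{i:\,\multideg(h_i g_i)=\delta}\LT(h_i)\LT(g_i)$ is a nonzero sum of terms each divisible by the corresponding $\LT(g_i)$, whence $\LM(g_i)\mid\LM(f)$ for some such $i$ and we are done. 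Otherwise $\multideg(f)<\delta$, and I claim $\delta$ was not minimal: writing $J := \{\, i : \multideg(h_i g_i) = \delta\,\}$ and $m_i := \LT(h_i)$ for $i\in J$, the tail $f - \sum_{i\in J} m_i g_i$ has multidegree $<\delta$, so $\sum_{i\in J} m_i g_i$, a sum of polynomials all of multidegree exactly $\delta$, has multidegree $<\delta$. By the standard cancellation lemma (cf.\ \cite{Cox_Little_O’Shea_2015}), such a sum is an $\Rbb$-linear combination of the pairwise $S$-polynomials $S(m_j g_j, m_k g_k)$, each of multidegree $<\delta$; moreover $S(m_j g_j, m_k g_k) = x^{\delta-\gamma_{jk}}S(g_j, g_k)$, where $x^{\gamma_{jk}} = \lcm(\LM(g_j),\LM(g_k))$ (using $\LM(g_j),\LM(g_k)\mid x^\delta$). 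Replacing each $S(g_j, g_k)$ by the output $\sum_l a_{jkl} g_l$ of the division algorithm certified by $S(g_j, g_k)\rightarrow_\Gcal 0$, which obeys $\multideg(a_{jkl} g_l)\le\multideg(S(g_j, g_k))<\gamma_{jk}$, and back-substituting, one assembles a new representation $f = \sum_i \tilde h_i g_i$ with $\multideg(\tilde h_i g_i)<\delta$ for every $i$, contradicting minimality. Hence the second case never arises, and the argument concludes.

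The main obstacle is the cancellation lemma together with the degree accounting around it: one must check that the leading terms of the $m_i g_i$ for $i\in J$ genuinely cancel, that $S(m_j g_j, m_k g_k)$ is exactly the displayed monomial multiple of $S(g_j, g_k)$ — so that the hypothesis $S(g_j, g_k)\rightarrow_\Gcal 0$ becomes usable — and, crucially, that substituting those reductions strictly lowers the top multidegree of the representation. Everything else reduces to routine properties of the division algorithm and of the monomial order.
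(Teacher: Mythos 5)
The paper does not actually prove this statement: it is quoted as Buchberger's Criterion in the expository Section on Gr\"obner bases, with the proof deferred to the cited reference \cite{Cox_Little_O’Shea_2015}. Your argument is precisely the standard proof from that source --- the division algorithm plus the zero-remainder characterization of ideal membership for the forward direction, and the minimal-multidegree descent via the cancellation lemma for sums of $S$-polynomials in the converse --- and it is correct, including the degree bookkeeping ($\multideg(S(g_j,g_k))<\gamma_{jk}$ and the division-algorithm bound on the quotients) that makes the descent strict.
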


$\Gcal$ is called \emph{reduced} if for all $g\in \Gcal$ we have $\LC(g) = 1$ and no monomial of $g$ is contained in $\LT(\Gcal\setminus \{g\})$.

\begin{theorem}
    Let $\Gcal = \set{g_1,\dots, g_n}$ be a Gröbner basis for an ideal $I\subset\R[X]$,
    then the monomials not divisible by any of $\set{\LT(g_i), g_i\in\Gcal}$ span the space $\R[X]/I$. In particular, those monomials of degree up to $k$ span the space $(\R[X]/I)_{\leq k}$.
\end{theorem}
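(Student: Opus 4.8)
The statement is a classical fact about Gröbner bases (it is essentially the Macaulay basis theorem). The plan is to show that the residue classes of the monomials not divisible by any $\LT(g_i)$ — call this set of monomials $\mathcal{N}$, the "standard monomials" or "normal monomials" — form a basis of $\R[X]/I$ as an $\R$-vector space; in particular they span it, which is what is claimed. Since the question only asks for spanning, I would focus on that, but the spanning proof is cleanest when accompanied by the observation that these classes are also linearly independent, so I would mention both.

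First I would argue spanning: take any $f \in \R[X]$ and apply the division algorithm with respect to the Gröbner basis $\Gcal$, obtaining $f = \sum_i c_i g_i + r$ where no monomial of the remainder $r$ is divisible by any $\LT(g_i)$ — this last property is exactly the termination condition of the multivariate division algorithm. Hence $r$ is an $\R$-linear combination of monomials in $\mathcal{N}$, and $f \equiv r \pmod I$, so $f + I$ lies in the span of $\{x^\alpha + I : x^\alpha \in \mathcal{N}\}$. This already proves the span claim. Next, for linear independence (to justify that it is a basis), suppose some $\R$-linear combination $\sum_\alpha a_\alpha x^\alpha$ of distinct monomials in $\mathcal{N}$ lies in $I$. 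If it were nonzero, its leading monomial would lie in $(\LM(I)) = (\LM(\Gcal))$ because $\Gcal$ is a Gröbner basis, hence would be divisible by some $\LT(g_i)$ — contradicting membership in $\mathcal{N}$. So the combination is zero, giving independence. For the truncated statement, I would note that the division algorithm with respect to grevlex (or any graded order) does not increase multidegree: $\multideg(r) \le \multideg(f)$, so if $\deg(f) \le k$ then $r$ is a combination of monomials in $\mathcal{N}$ of degree $\le k$; combined with the definition $\deg(f + I) = \min\{\deg h : f \equiv h \bmod I\}$, this shows the degree-$\le k$ standard monomials span $(\R[X]/I)_{\le k}$.

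The main obstacle — really the only subtle point — is the truncated claim, because it hinges on the monomial order being graded (a purely lexicographic order would fail here), and on reconciling the division-algorithm bound $\multideg(r) \le \multideg(f)$ with the quotient-degree definition. I would make explicit that grevlex is graded so that the remainder never has larger total degree than the input, and that this is precisely what forces the degree-$\le k$ standard monomials to witness $(\R[X]/I)_{\le k}$. Everything else is a direct invocation of the division algorithm and Buchberger-style reasoning already recalled in Section~\ref{section: intro-Gröbner}, so no lengthy computation is needed.
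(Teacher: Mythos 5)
Your proof is correct: the paper states this theorem without giving a proof (it is the standard Macaulay basis theorem, deferred to the cited Gröbner basis literature), and your division-algorithm argument — remainder supported on standard monomials for spanning, leading-term membership in $(\LM(\Gcal))$ for independence — is the standard one. You also rightly isolate the only delicate point, namely that the truncated claim requires the monomial order to be graded so that the remainder satisfies $\deg(r)\le\deg(f)$, which is what reconciles the division algorithm with the paper's definition $\deg(f+I)=\min\{\deg h : f\equiv h \bmod I\}$.
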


\subsection{Gröbner basis}
\label{section: Gröbner bases proofs}
The goal of this section is to proove the following theorem case by case.
\begin{theorem}
    \label{theorem:groebner-basis}
    We have the following reduced Gröbner basis
    \begin{itemize}
        \item $\Gcal_0 := \{ x_a x_b - x_{a\wedge b}x_{a\vee b} \mid a, b \in [\nbf], a < b \text{ and } a_i > b_i \text{ for some } i \}$ for $I_0$,
        \item $\Gcal_{1} := \{ x_a x_b \mid a, b \in [\nbf], a < b \} \cup \{ \sum_{a\in [\nbf]} x_a^2 - 1 \} \cup \{ x_a^3 - x_a \mid a \in [\nbf], a > (1, \dots, 1) \}$ for $I_1$,
        \item $\Gcal_{p} := \{ \sum_{a\in [\nbf]} x_a^p - 1 \} \cup \Gcal_0$ for $I_p$ with $2 \le p < \infty$, and
        \item $\Gcal_\infty := \{ x_a^2 - 1 \mid a \in [\nbf] \} \cup \{ x_a x_b - x_{a\bwedge b} x_{a\bvee b} \mid a, b \in [\nbf], a < b \text{ and } a_i > b_i \text{ or } a_i = b_i < \nbf_i \text{ for some } i \}$ for $I_{\infty}$,
    \end{itemize}
\end{theorem}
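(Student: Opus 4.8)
The plan is to prove Theorem~\ref{theorem:groebner-basis} by verifying, for each of the four ideals, that the claimed set $\Gcal$ is (i) a subset of the ideal, (ii) a Gröbner basis via Buchberger's criterion (Theorem~\ref{thm-buchberger-criterion}), and (iii) reduced in the sense defined above. Step (i) is immediate for $\Gcal_0$, $\Gcal_p$ and $\Gcal_\infty$ since the listed polynomials are among the given generators or obvious combinations thereof; for $\Gcal_1$ one must additionally check that $x_a^3 - x_a$ for $a = (1,\dots,1)$ and the ``removed'' binomials $x_ax_b$ with $a_i \le b_i$ for all $i$ (i.e.\ $a \le b$ coordinatewise) are recoverable from the stated generators, e.g.\ $x_{(1,\dots,1)}^3 - x_{(1,\dots,1)} = x_{(1,\dots,1)}\cdot(\sum_a x_a^2 - 1) - \sum_{a \neq (1,\dots,1)} x_{(1,\dots,1)} x_a^2 + \text{(products } x_ax_b)$, so it lies in $I_1$. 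The reducedness check (iii) is a finite bookkeeping argument: one reads off the leading terms under grevlex --- $\LT(x_ax_b - x_{a\wedge b}x_{a\vee b}) = x_ax_b$ precisely because the condition ``$a_i > b_i$ for some $i$'' (together with $a<b$ lexicographically) guarantees $a \vee b$ is strictly grevlex-larger than $a$ and $b$ individually, hence $x_ax_b \succ x_{a\wedge b}x_{a\vee b}$ --- and then verifies no leading term divides a non-leading monomial of another generator.

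The technical heart is step (ii): computing all $S$-polynomials and reducing them to zero. I would organize this by the type of pair. Within $\Gcal_0$ (and the binomial part of $\Gcal_p$, $\Gcal_\infty$), the $S$-polynomial of two binomials $x_ax_b - x_{a\wedge b}x_{a\vee b}$ and $x_cx_d - x_{c\wedge d}x_{c\vee d}$ is nonzero only when the leading monomials share a variable; here the key algebraic identity is associativity/commutativity of $\wedge$ and $\vee$ on $\N^d$ (they are coordinatewise $\min$, $\max$), which lets one rewrite the difference of the two tails as a telescoping combination of further binomials in the ideal, each of strictly smaller leading term. This is exactly the kind of diamond/confluence argument that appears for toric/determinantal ideals, and I expect it to go through cleanly once the overlap cases are enumerated (essentially: the two pairs $\{a,b\}$, $\{c,d\}$ share one index, or $\LT$'s share a squared variable). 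For $\Gcal_1$, one additionally handles pairs involving $x_a^3 - x_a$ and $\sum x_a^2 - 1$: the $S$-polynomial of $x_a^3-x_a$ with $x_ax_b$ reduces using $x_ax_b \to_\Gcal 0$; the $S$-polynomial with $\sum_c x_c^2 - 1$ produces $x_a - x_a\sum_c x_c^2 + (\text{lower})$, which reduces to zero since every $x_ax_c^2$ with $c \neq a$ is a multiple of $x_ax_c \in \Gcal_1$ and $x_a \cdot x_a^2 = x_a^3 \to_\Gcal x_a$. For $\Gcal_p$ with $2 \le p < \infty$, the new pairs are $\sum_a x_a^p - 1$ against a binomial $x_ax_b - x_{a\wedge b}x_{a\vee b}$; since $\LT(\sum x_a^p - 1) = x_{(1,\dots,1)}^p$, an overlap forces $a$ or $b$ to equal $(1,\dots,1)$, and one reduces the resulting polynomial by repeatedly applying the binomial relations to collapse each $x_c^p$ term --- here the combinatorial fact that any monomial can be straightened to a ``sorted'' normal form under the relations $x_ax_b = x_{a\wedge b}x_{a\vee b}$ is what guarantees termination at $0$.

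The main obstacle I anticipate is the $S$-polynomial reduction among the binomials in $\Gcal_0$ and, most of all, in $\Gcal_\infty$: the latter has a larger generating set (the condition is ``$a_i > b_i$ \emph{or} $a_i = b_i < \nbf_i$ for some $i$''), reflecting that modulo $x_a^2 - 1$ one must also straighten squared variables, so the confluence diagrams have more cases and the reductions interleave the quadratic relations $x_a^2 \to 1$ with the binomial straightening relations. I would handle $\Gcal_\infty$ last, after the pattern from $\Gcal_0$ is established, treating it as ``$\Gcal_0$ plus square-reduction'' and carefully checking that the extra generators (those with $a_i = b_i < \nbf_i$) are exactly what is needed to make the combined system confluent --- intuitively, $x_ax_b$ with $a_i = b_i$ for the shared coordinate can be pushed down because $x_{a_i}^{(i)}{}^2$ effectively disappears, but only if the relation is present to trigger it. A clean way to manage the whole proof is to first establish a normal-form/straightening lemma for monomials modulo $I_0$ (every monomial equals a unique ``weakly increasing chain'' monomial), deduce the $\Gcal_0$ case, and then bolt on the degree-lowering relations ($x_a^3-x_a$, $\sum x_a^p - 1$, $x_a^2-1$) for the remaining ideals; but since the theorem only asks for the Gröbner basis, a direct Buchberger verification organized by pair-type, as above, is the most economical route.
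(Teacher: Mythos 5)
Your plan is correct and follows essentially the same route as the paper: a straightening/normal-form lemma for monomials modulo the binomial relations (the paper's Proposition~\ref{pro-rk-1-poly-div} and its $\Gcal_\infty$-analogue \ref{pro-Ginfty-poly-div}), followed by a Buchberger verification organized by pair type, with the degree-lowering relations bolted on for $p=1,p,\infty$. The only simplification you miss is that for $\Gcal_p$ ($2\le p<\infty$) the overlap case you anticipate is vacuous --- no leading monomial $x_ax_b$ in $\Gcal_0$ involves $x_{1\dots1}$, since $a_i>b_i$ for some $i$ rules out $a=(1,\dots,1)$ --- so those S-polynomials are handled by coprimality alone.
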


where we use the following notations for $a,b\in [\nbf]$
\[
    (a\wedge b)_i = \min\{a_i, b_i\}, \quad (a\vee b)_i = \max\{a_i, b_i\}
\]
and
\[
    (a\bwedge b)_i = \begin{cases}
        \min\{a_i, b_i\} & \text{if } a_i \neq b_i, \\
        n_i & \text{else.}
    \end{cases},
    \quad
    (a\bvee b)_i = \begin{cases}
        \max\{a_i, b_i\} & \text{if } a_i \neq b_i, \\
        n_i & \text{else.}
    \end{cases}
\]

The case for $p = 2$ was already proven in \cite{Rauhut_Stojanac_2017}. The case $2 \leq p < \infty$ is based on their work, except for the fact that they relied on matricization to prove their results, while we work directly with tensors. The main benefit is an easier description of $\Gcal_{p}$.

For the other cases, we used the computer algebra system OSCAR \cite{OSCAR} to compute examples for the Gröbner bases of the ideals $I_p$.

\begin{proposition}
    \label{pro-rk-1-poly-div}
    Let  $x_{a^1} \cdots x_{a^k} \in \Rbb[X]$. Then $x_{a^1} \cdots x_{a^k}$ reduces to $ x_{b^1}\cdots x_{b^k}$ modulo $\Gcal_0$,
    where $b^j_i = \min  (\sqcup_{s=1}^k \{ a^s_i \} \setminus \sqcup_{s=1}^{j-1} \{ b^s_i \})$.
\end{proposition}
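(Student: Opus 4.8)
The plan is to read off what the division algorithm actually does. Modulo $\Gcal_0$, reduction amounts to repeatedly applying a \emph{straightening move}: if a leading monomial of some generator divides the current monomial, replace that factor $x_a x_b$ by $x_{a\wedge b} x_{a\vee b}$, and iterate until nothing divides. The target $x_{b^1}\cdots x_{b^k}$ is precisely the monomial whose factor indices, suitably reordered, form a chain $b^1 \le b^2 \le \cdots \le b^k$ in the coordinatewise partial order on $[\nbf]$ and realize the prescribed multiset $\{a^1_i,\dots,a^k_i\}$ in each coordinate $i$. So the proposition asserts that straightening always terminates at this ``sorted'' monomial, independently of the choices made in the division algorithm — worth stressing, since this proposition is a stepping stone toward proving that $\Gcal_0$ is a Gröbner basis and we must not assume that here.

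The first step is to identify the leading terms of $\Gcal_0$. For $a<b$ in lex order with $a_i>b_i$ for some $i$, one checks that in lexicographic order $a\wedge b < a < b < a\vee b$: the first coordinate in which $a\wedge b$ differs from $a$ is the first index $i$ with $a_i>b_i$, where $(a\wedge b)_i = b_i < a_i$, so $a\wedge b < a$, and symmetrically $b < a\vee b$. Since the variable order is lexicographic in the index, $x_{a\vee b}$ is grevlex-last among the four variables involved, so in comparing the exponent vectors of $x_a x_b$ and $x_{a\wedge b} x_{a\vee b}$ the first nonzero entry from the right equals $-1$, at $x_{a\vee b}$. Hence $\LM(x_a x_b - x_{a\wedge b} x_{a\vee b}) = x_a x_b$ with leading coefficient $1$; and since, given $a<b$ in lex order, the condition ``$a_i>b_i$ for some $i$'' is exactly the condition that $\{a,b\}$ is incomparable in the product order, the set of leading monomials of $\Gcal_0$ is $\{ x_a x_b : \{a,b\}\text{ incomparable}\}$.

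This gives both termination and the shape of the remainder. A straightening move replaces a factor by a strictly grevlex-smaller monomial, hence strictly decreases the whole monomial; grevlex being a well-order, the division algorithm terminates at a monic monomial $r = x_{c^1}\cdots x_{c^k}$. Being irreducible, $r$ is divisible by no leading monomial, i.e. no two of the indices $c^1,\dots,c^k$ form an incomparable pair; so they are pairwise $\le$-comparable and can be relabelled into a chain $c^{\sigma(1)} \le \cdots \le c^{\sigma(k)}$. Next I would invoke invariance of the coordinate multisets: a straightening move replaces $\{a_i,b_i\}$ by $\{\min(a_i,b_i),\max(a_i,b_i)\}$, leaving the multiset of $i$-th coordinates unchanged, so $\{c^1_i,\dots,c^k_i\} = \{a^1_i,\dots,a^k_i\}$ for every $i$. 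Since $c^{\sigma(1)}_i \le \cdots \le c^{\sigma(k)}_i$ holds simultaneously for all $i$, the value $c^{\sigma(j)}_i$ must be the $j$-th smallest element of $\{a^1_i,\dots,a^k_i\}$, i.e. $c^{\sigma(j)} = b^j$. Therefore $r = x_{b^1}\cdots x_{b^k}$, as claimed.

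The only genuinely delicate point is the grevlex leading-term computation: the generators of $\Gcal_0$ pair up indices that are incomparable in the product order but must still be compared in lexicographic order, and it is easy to swap the roles of $x_a x_b$ and $x_{a\wedge b} x_{a\vee b}$. Once $\LM(x_a x_b - x_{a\wedge b} x_{a\vee b}) = x_a x_b$ is settled, the rest is the termination-plus-two-invariants bookkeeping above, together with the elementary fact that a finite pairwise-comparable subset of a poset is a chain whose order, read coordinatewise, is the sorted order of the multisets.
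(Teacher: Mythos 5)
Your proof is correct and follows essentially the same approach as the paper's: the reduction is a coordinatewise sorting process, with the remainder determined by the invariance of the coordinate multisets under each straightening move. In fact your write-up is more careful than the paper's (which simply reduces to $d=1$ and asserts that the operation is sorting), since you explicitly verify that $\LM(x_a x_b - x_{a\wedge b}x_{a\vee b}) = x_a x_b$ in grevlex, that the process terminates, and that an irreducible monomial must be the sorted chain.
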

\begin{proof}
    Assume that $x_{a^1} \cdots x_{a^k} \in \Rbb[X]$ is divisible by $x_{a^1}x_{a^2} - x_{a^1\wedge a^2}x_{a^1\vee a^2}$. Polynomial division by $x_{a^1}x_{a^2} - x_{a^1\wedge a^2}x_{a^1\vee a^2}$ can now be purely expressed by an operation on the tuples $(a^1_i, \dots, a^k_i)$ for $1\le i \le d$. In particular, the operations for different indices are independent of each other. Thus, we can reduce to the case $d = 1$, i.e. $X = \Rbb^n$. Now, it is clear that the operation is simply sorting $a^1, \dots, a^k$ ascending.
\end{proof}

\begin{corollary}
    \label{cor-I0-Gröbner-basis}
    $\Gcal_0$ is the reduced Gröbner basis of $I_0$.
\end{corollary}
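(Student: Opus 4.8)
The plan is to apply Buchberger's criterion (Theorem~\ref{thm-buchberger-criterion}), dispatching the $S$-polynomials via Proposition~\ref{pro-rk-1-poly-div}, and then to check reducedness by hand. First I would note that $\Gcal_0$ generates $I_0$: the defining binomial $x_a x_b - x_{a\wedge b} x_{a\vee b}$ is identically zero whenever the pair $\{a,b\}$ is comparable componentwise (then $\{a\wedge b, a\vee b\} = \{a,b\}$), and for an incomparable pair it coincides, after ordering the indices so that $a < b$ lexicographically, with a member of $\Gcal_0$; thus $\Gcal_0$ is precisely the set of nonzero defining binomials.

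Next I would pin down the leading terms. Fix $g = x_a x_b - x_{a\wedge b} x_{a\vee b} \in \Gcal_0$. Since $\{a,b\}$ is incomparable, the four tuples $a\wedge b, a, b, a\vee b$ are pairwise distinct; and because $a\wedge b \le a$ and $b \le a\vee b$ componentwise (hence lexicographically) while $a < b$ by the indexing of $\Gcal_0$, we get the chain $a\wedge b < a < b < a\vee b$, i.e. the variable order $x_{a\wedge b} > x_a > x_b > x_{a\vee b}$. Labelling these four variables $v_1 > v_2 > v_3 > v_4$, a short grevlex computation — the exponent difference of $v_2 v_3$ and $v_1 v_4$ is $(-1,1,1,-1)$ in the coordinates of $v_1,\dots,v_4$, with last nonzero entry negative — gives $v_2 v_3 > v_1 v_4$. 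Hence $\LT(g) = x_a x_b$ and $\LC(g) = 1$.

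Now the $S$-polynomials $S(g, g')$ for $g, g' \in \Gcal_0$, with $\LT(g) = x_a x_b$ and $\LT(g') = x_c x_d$. If $\{a,b\} \cap \{c,d\} = \emptyset$, the leading monomials are relatively prime and $S(g,g') \rightarrow_{\Gcal_0} 0$; if $\{a,b\} = \{c,d\}$, then $g = g'$ and there is nothing to check. In the remaining case the two pairs share exactly one variable; relabelling, $\LT(g) = x_a x_b$, $\LT(g') = x_b x_d$ with $a,b,d$ distinct, so $\lcm(\LT(g),\LT(g')) = x_a x_b x_d$ and one computes
\[
    S(g,g') = x_d\, g - x_a\, g' = x_a x_{b\wedge d} x_{b\vee d} - x_d x_{a\wedge b} x_{a\vee b}.
\]
For every coordinate $i$, the multiset of $i$-th components of $\{a,\, b\wedge d,\, b\vee d\}$ is $\{a_i,\min(b_i,d_i),\max(b_i,d_i)\} = \{a_i,b_i,d_i\}$, and likewise for $\{d,\, a\wedge b,\, a\vee b\}$. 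Since the normal form of a squarefree monomial modulo $\Gcal_0$ described in Proposition~\ref{pro-rk-1-poly-div} depends only on these coordinatewise multisets, both monomials of $S(g,g')$ reduce to the \emph{same} sorted monomial, so $S(g,g') \rightarrow_{\Gcal_0} 0$. By Buchberger's criterion, $\Gcal_0$ is a Gröbner basis of $I_0$.

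It remains to check reducedness. We have already seen $\LC(g) = 1$. Of the two monomials of $g = x_a x_b - x_{a\wedge b} x_{a\vee b}$, the monomial $\LT(g) = x_a x_b$ is squarefree of degree $2$, hence divisible by another leading monomial $x_c x_d$ only if $\{c,d\} = \{a,b\}$, which forces $g' = g$; and the monomial $x_{a\wedge b} x_{a\vee b}$ is supported on the comparable pair $(a\wedge b, a\vee b)$, whose defining binomial vanishes, so it is a standard monomial and is divisible by no leading term of $\Gcal_0$ at all. Thus no monomial of $g$ is divisible by $\LT(g')$ for any $g' \in \Gcal_0 \setminus \{g\}$, and $\Gcal_0$ is reduced. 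I expect the only genuine obstacle to be the shared-variable $S$-polynomial: one must set up the coordinatewise multiset identity carefully and be sure Proposition~\ref{pro-rk-1-poly-div} is applied to \emph{both} monomials to yield the same normal form; everything else is routine bookkeeping.
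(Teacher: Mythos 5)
Your proposal is correct and follows essentially the same route as the paper's proof: Buchberger's criterion, with disjoint-support pairs dismissed by coprimality of leading terms and the single-overlap case handled by computing the $S$-polynomial and observing that its two monomials have identical coordinatewise index multisets, so Proposition~\ref{pro-rk-1-poly-div} sends both to the same normal form. You are somewhat more thorough than the paper (explicit grevlex verification that $\LT(g)=x_ax_b$, and an explicit reducedness check where the paper says it is clear), but the core argument is identical.
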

\begin{proof}
    Note that $x_a x_b - x_{a\wedge b} x_{a\vee b}$ and $x_c x_d - x_{c\wedge d} x_{c\vee d}$ are coprime if $\{a, b\}$ and $\{c, d\}$ are disjoint. We will prove the case $b = d$, the other cases are similar. Compute
    \[
        S(x_a x_b - x_{a\wedge b}x_{a\vee b}, x_c x_b - x_{c\wedge b}x_{c\vee b}) = x_a x_{c\wedge b} x_{c\vee b} - x_c x_{a\wedge b} x_{a\vee b}
    \]
    and compare the index sets for both monomials
    \[
        \{ a_i, (c\wedge b)_i, (c\vee b)_i \} = \{ a_i, b_i, c_i \} = \{ c_i, (a\wedge b)_i, (a\vee b)_i \}
    \]
    By \ref{pro-rk-1-poly-div} we have $S(g_{a,b}, g_{a,c}) \rightarrow_{\Gcal_0} 0$, hence $\Gcal_0$ is a Gröbner basis of $I_0$. The reducedness is clear.
\end{proof}

\begin{corollary}
    $\Gcal_{p}$ is the reduced Gröbner basis of $I_{p}$ for $2 \le p < \infty$.
\end{corollary}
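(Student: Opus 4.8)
\emph{Approach.} The plan is to apply Buchberger's Criterion (Theorem~\ref{thm-buchberger-criterion}) to $\Gcal_p = \{f\}\cup\Gcal_0$, where $f := \sum_{a\in[\nbf]} x_a^p - 1$, bootstrapping from the fact that $\Gcal_0$ is already the reduced Gröbner basis of $I_0$ (Corollary~\ref{cor-I0-Gröbner-basis}). The generation hypothesis is immediate: since $\Gcal_0$ generates $I_0$, the definition of $I_p$ gives $\gen{\Gcal_p} = \gen{f} + I_0 = I_p$. The first substantive step is to identify the leading term of $f$ under grevlex. With $x_{1\dots1}$ the largest variable (Section~\ref{section: intro-Gröbner}), every non-constant monomial of $f$ has total degree $p$, and for $a\neq(1,\dots,1)$ the exponent difference $p\mathbf{e}_{(1,\dots,1)}-p\mathbf{e}_a$ has last nonzero entry $-p<0$; hence $\LT(f)=x_{(1,\dots,1)}^p$. (Evenness of $p$ is not used here; it is only needed to make $\Vcal_\R(I_p)$ describe $B_p$.)

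\emph{$S$-polynomials.} The pairs split into two types. For $g_i,g_j\in\Gcal_0$, Corollary~\ref{cor-I0-Gröbner-basis} already gives $S(g_i,g_j)\rightarrow_{\Gcal_0}0$; since $\Gcal_0\subseteq\Gcal_p$, the resulting standard representation over $\Gcal_0$ is also one over $\Gcal_p$, so $S(g_i,g_j)\rightarrow_{\Gcal_p}0$. For a pair $\{f,g\}$ with $g=x_ax_b-x_{a\wedge b}x_{a\vee b}\in\Gcal_0$, the defining condition of $\Gcal_0$ ($a<b$ and $a_i>b_i$ for some $i$) forces $a\neq(1,\dots,1)$, hence $b\neq(1,\dots,1)$ as well. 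Thus $\LT(f)=x_{(1,\dots,1)}^p$ and $\LT(g)=x_ax_b$ are coprime, so $S(f,g)\rightarrow_{\Gcal_p}0$ by the standard fact that an $S$-polynomial with coprime leading terms reduces to zero (the same fact used implicitly in the proof of Corollary~\ref{cor-I0-Gröbner-basis}). Buchberger's Criterion then shows $\Gcal_p$ is a Gröbner basis of $I_p$.

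\emph{Reducedness, and the main obstacle.} All leading coefficients equal $1$, and the $\Gcal_0$-part is reduced by Corollary~\ref{cor-I0-Gröbner-basis}; adding $f$ preserves this. Indeed, no monomial of a generator $x_ax_b-x_{a\wedge b}x_{a\vee b}$ of $\Gcal_0$ is divisible by $\LT(f)=x_{(1,\dots,1)}^p$ (by degree when $p\geq 3$; when $p=2$ because divisibility by $x_{(1,\dots,1)}^2$ would force a repeated index, impossible for $x_ax_b$ with $a<b$ and for $x_{a\wedge b}x_{a\vee b}$ unless $a=b$), while the monomials of $f$ are the pure powers $x_a^p$ and the constant, none divisible by a leading term $x_ax_b$ ($a\neq b$) of $\Gcal_0$. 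I expect the only point requiring genuine care — rather than routine bookkeeping — to be the leading-term computation combined with the structural observation that the inequality ``$a_i>b_i$ for some $i$'' defining $\Gcal_0$ is exactly what rules out the index $(1,\dots,1)$, and hence makes $\LT(f)$ coprime to every leading term of $\Gcal_0$; with that in hand, no $S$-polynomial computation beyond those already done for $I_0$ is required.
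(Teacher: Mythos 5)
Your proof is correct and follows essentially the same route as the paper: identify $\LT\bigl(\sum_a x_a^p - 1\bigr) = x_{1\dots1}^p$, observe it is coprime to every leading term $x_a x_b$ of $\Gcal_0$, and conclude via Buchberger's criterion from Corollary~\ref{cor-I0-Gröbner-basis}. You are in fact slightly more careful than the paper in justifying why $(1,\dots,1)$ cannot occur among the indices $a,b$ of a $\Gcal_0$-generator, and in checking reducedness explicitly.
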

\begin{proof}
    The polynomials $\sum_{a\in [\nbf]} x_a^p - 1$ and $x_a x_b - x_{a\wedge b} x_{a\vee b}$ are always coprime, since their respective leading terms are $x_{1\dots1}^p$ and $x_a x_b$ with $a \neq b$, so the statement follows from \ref{cor-I0-Gröbner-basis}.
\end{proof}

\begin{proposition}
    $\Gcal_1$ is the reduced Gröbner basis of $I_1$.
\end{proposition}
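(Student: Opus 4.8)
The plan is to verify Buchberger's criterion (Theorem~\ref{thm-buchberger-criterion}) for $\Gcal_1$ — i.e.\ to check that $\Gcal_1$ generates $I_1$ and that every $S$-polynomial of a pair of its elements reduces to zero modulo $\Gcal_1$ — and then to read off reducedness from the leading terms. Throughout, write $e = (1,\dots,1)$ for the $<$-smallest index in $[\nbf]$. The inclusion $\Gcal_1\subseteq I_1$ is immediate, since every element of $\Gcal_1$ is one of the listed generators of $I_1$ (the binomials $x_ax_b$ with $a>b$ being equal to $x_bx_a\in\Gcal_1$). For the reverse inclusion, the only generator of $I_1$ not literally in $\Gcal_1$ is $x_e^3-x_e$, and the identity
\[
    x_e^3 - x_e \;=\; x_e\Bigl(\sum_{c\in[\nbf]} x_c^2 - 1\Bigr) \;-\; \sum_{c\neq e} x_c\,(x_e x_c),
\]
together with the fact that $e<c$ for every $c\neq e$ (so each $x_ex_c\in\Gcal_1$), shows $x_e^3-x_e\in\gen{\Gcal_1}$. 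Hence $\gen{\Gcal_1}=I_1$.

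Under grevlex the leading monomials are $x_ax_b$ for the binomials ($a<b$), $x_e^2$ for $\sum_c x_c^2-1$ (as $x_e$ is the largest variable), and $x_f^3$ for the cube relations ($f>e$). A pair whose leading monomials are coprime always has $S$-polynomial reducing to zero (a standard fact), so we only inspect the non-coprime pairs. If both members are binomials, they are monomials, hence $S(x_ax_b,x_cx_d)=0$ identically. If $f\in\{a,b\}$, then $S(x_ax_b,\,x_f^3-x_f)=x_ax_b\in\Gcal_1$, which reduces to zero. The pairs $(\sum_c x_c^2-1,\,x_f^3-x_f)$, $(x_f^3-x_f,\,x_g^3-x_g)$ with $f\neq g$, and $(x_ax_b,\,\sum_c x_c^2-1)$ with $a\neq e$ all have coprime leading monomials. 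The remaining, genuinely interesting case is $a=e$: for $b>e$,
\[
    S\bigl(x_ex_b,\ \textstyle\sum_c x_c^2-1\bigr) \;=\; x_b \;-\; \sum_{c\neq e} x_b x_c^2,
\]
where each cross term $x_bx_c^2=x_c\,(x_bx_c)$ with $c\neq e,b$ is divisible by a binomial in $\Gcal_1$ and so reduces to zero, leaving $x_b - x_b^3 = -(x_b^3 - x_b)$, which belongs to $\Gcal_1$ exactly because $b>e$ and thus reduces to zero. I expect this reduction to be the heart of the proof: it is precisely what forces the cube relations $x_b^3-x_b$ for $b>e$ into the basis, and — consistently with the identity above — explains why $x_e^3-x_e$ itself is redundant. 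Buchberger's criterion then yields that $\Gcal_1$ is a Gröbner basis of $I_1$.

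Finally, $\Gcal_1$ is reduced: each of its elements has leading coefficient $1$, and a routine inspection of leading monomials shows that no monomial occurring in an element of $\Gcal_1$ — these are the $x_ax_b$ with $a<b$, the $x_c^2$ and the constant from $\sum_c x_c^2-1$, and the $x_f^3,x_f$ from the cube relations — is divisible by the leading monomial of any \emph{other} element (a degree argument excludes divisibility by any $x_f^3$, and the only way a product $x_cx_d$ with $c<d$ or a square $x_e^2$ divides one of these monomials is when that monomial is already that element's own leading term). Hence $\Gcal_1$ is the reduced Gröbner basis of $I_1$. As an independent check, the standard monomials of $\gen{\LT(\Gcal_1)}$ are $\{1\}\cup\{x_a:a\in[\nbf]\}\cup\{x_f^2:f>e\}$, which number $2N$ with $N=n_1\cdots n_d$, matching $\#\Vcal_\C(I_1)=\#\{\pm e_a\}=2N$.
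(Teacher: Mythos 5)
Your proof is correct and follows essentially the same route as the paper's: Buchberger's criterion, with exactly the same non-coprime pairs and the same key reduction $S(x_ex_b,\sum_c x_c^2-1)\rightarrow_{\Gcal_1}0$. You are somewhat more careful than the paper on the generation step (the explicit identity exhibiting $x_e^3-x_e$ in $\gen{\Gcal_1}$, where the paper only says ``one easily sees'') and you add a useful sanity check via the count of standard monomials, but these are refinements of the same argument rather than a different approach.
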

\begin{proof}
    We begin with the computation of the $S$-polynomials. All polynomials in the first set are monomials; thus, their S-polynomials are necessarily 0. Furthermore, all leading monomials of the second and third set are coprime and consequently, their S-polynomials are 0. It remains to check
    \[
        S(x_a x_b, x_a^3 - x_a) = x_a x_b \rightarrow_{\Gcal_1} 0,
        \quad
        S(x_a x_b, x_b^3 - x_b) = x_a x_b \rightarrow_{\Gcal_1} 0,
    \]
    and
    \[
        S(x_{1\dots 1} x_b, \sum\nolimits_{a\in [\nbf]} x_a^2 - 1) = \left(\sum\nolimits_{a > (1, \dots, 1), a \neq b} x_a^2 x_b\right)  + x_b^3 - x_b \rightarrow_{\Gcal_1} 0.
    \]
    One easily sees that $\Gcal_1$ is a generating set of $I_1$ and thus $\Gcal_1$ is a Gröbner basis of $I_1$; the reducedness is clear.
\end{proof}

\begin{lemma}
    For $a, b\in [\nbf]$, we have $x_a x_b - x_{a\bwedge b} x_{a\bvee b} \in I_{\infty}$.
\end{lemma}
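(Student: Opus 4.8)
The plan is to prove the equivalent statement that $x_a x_b$ and $x_{a\bwedge b}x_{a\bvee b}$ represent the same element of $\R[X]/I_\infty$. Since $I_0 \subseteq I_\infty$, I am free to use both the rank-$1$ relations $x_c x_d - x_{c\wedge d}x_{c\vee d}$ and the relations $x_c^2 - 1$. The core computation is to reduce the single degree-$4$ monomial $x_a x_b\, x_{a\bwedge b}x_{a\bvee b}$ modulo $\Gcal_0$ using Proposition~\ref{pro-rk-1-poly-div}, according to which this reduction is obtained by sorting the four tuples $a, b, a\bwedge b, a\bvee b$ ascendingly in each coordinate. Fix a coordinate $i$. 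If $a_i\neq b_i$, then $\{(a\bwedge b)_i,(a\bvee b)_i\}=\{a_i,b_i\}$, so the $i$-th coordinates form the multiset $\{\min(a_i,b_i),\min(a_i,b_i),\max(a_i,b_i),\max(a_i,b_i)\}$; if $a_i=b_i$, then $(a\bwedge b)_i=(a\bvee b)_i=n_i\ge a_i$, so they form the multiset $\{a_i,a_i,n_i,n_i\}$. In either case the sorted quadruple has equal first two entries and equal last two entries. Consequently $x_a x_b\, x_{a\bwedge b}x_{a\bvee b}$ reduces modulo $\Gcal_0$ to a monomial of the shape $x_g^2 x_h^2$ with $g,h\in[\nbf]$, and since $\Gcal_0\subseteq I_0\subseteq I_\infty$ (Corollary~\ref{cor-I0-Gröbner-basis}) this is a congruence modulo $I_\infty$.

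Applying $x_c^2\equiv 1 \pmod{I_\infty}$ twice gives $x_g^2 x_h^2\equiv 1$, hence
\[
x_a x_b\, x_{a\bwedge b}x_{a\bvee b}\equiv 1\equiv (x_{a\bwedge b})^2(x_{a\bvee b})^2 \pmod{I_\infty},
\]
so $x_{a\bwedge b}x_{a\bvee b}\cdot\bigl(x_a x_b - x_{a\bwedge b}x_{a\bvee b}\bigr)\in I_\infty$.

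The point at which care is needed is the cancellation of the factor $x_{a\bwedge b}x_{a\bvee b}$: the ring $\R[X]/I_\infty$ is not an integral domain, so this is not automatic. The resolution is that $x_{a\bwedge b}x_{a\bvee b}$ is a unit in $\R[X]/I_\infty$, in fact equal to its own inverse, because its square is congruent to $1$. Multiplying the previous relation by $x_{a\bwedge b}x_{a\bvee b}$ therefore yields $x_a x_b - x_{a\bwedge b}x_{a\bvee b}\in I_\infty$. Apart from this observation, the only work is the coordinatewise sorting bookkeeping and noting $a_i\le n_i$ because $a\in[\nbf]$; I do not anticipate any further obstacle.
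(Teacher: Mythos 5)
Your proof is correct, but it takes a genuinely different route from the paper's. The paper writes down an explicit algebraic identity: it first establishes the auxiliary relation $x_{a\bwedge b}x_{a\vee b}-x_{a\wedge b}x_{a\bvee b}\in I_\infty$ via the observation that $a\wedge b=(a\bwedge b)\wedge(a\vee b)$ and $a\bvee b=(a\bwedge b)\vee(a\vee b)$, and then exhibits $x_ax_b-x_{a\bwedge b}x_{a\bvee b}$ directly as an explicit $\R[X]$-linear combination of generators of $I_\infty$ together with that auxiliary relation. You instead multiply the target binomial by $x_{a\bwedge b}x_{a\bvee b}$, use Proposition~\ref{pro-rk-1-poly-div} (coordinatewise sorting) to reduce the degree-$4$ monomial $x_ax_b\,x_{a\bwedge b}x_{a\bvee b}$ to a perfect square $x_g^2x_h^2\equiv 1$, and then cancel the factor by noting that every variable, hence $x_{a\bwedge b}x_{a\bvee b}$, is its own inverse in $\R[X]/I_\infty$. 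Your coordinatewise case analysis (the sorted quadruple is $(\min,\min,\max,\max)$ when $a_i\neq b_i$ and $(a_i,a_i,n_i,n_i)$ when $a_i=b_i$) is exactly right, and the unit-cancellation step, which you correctly flag as the delicate point, is sound. The paper's identity has the virtue of being fully explicit and constructive (it produces the coefficients of the generators), whereas your argument is less guesswork-prone, reuses the already-proved reduction proposition, and isolates the useful structural fact that all variables are units modulo $I_\infty$; the combinatorial content of the two proofs is essentially the same fact about how $\bwedge,\bvee$ interact with $\wedge,\vee$, packaged differently.
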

\begin{proof}
    The term $x_a x_b - x_{a\bwedge b} x_{a\bvee b}$ is either $0$ or in $\Gcal_\infty$. The first case is clear, so let us consider the second.
    
	We assume that $a_i = b_i < \nbf_i$ for some $i$, as otherwise $a\bwedge b = a\wedge b$ and $a\bvee b$ = $a\vee b$, hence the claim follows from the definition of $I_\infty$. Without restriction we can assume $x_a > x_b$, hence $a_j < b_j$ for some $j < i$. Together, this gives $(a\bwedge b)_i = \nbf_i > b_i = a_i\vee b_i$ and $(a\bwedge b)_j = a_j < b_j = a_j\vee b_j$, so $x_{a\bwedge b} x_{a\vee b} - x_{a\wedge b}x_{a\bvee b} \in I_{\infty}$, where we used the fact that $a\wedge b = (a\bwedge b) \wedge (a\vee b)$ and $a\bvee b = (a\bwedge b) \vee (a\vee b)$. Now,
    \begin{align*}
        x_a x_b - x_{a\bwedge b} x_{a\bvee b} &=
         x_a x_b - x_{a\wedge b} x_{a\vee b} + x_{a\wedge b} x_{a\bwedge b}(x_{a\bwedge b} x_{a\vee b} - x_{a\wedge b} x_{a\bvee b}) \\
        & + x_{a\bwedge b} x_{a\bvee b}(x_{a\wedge b}^2 - 1) - x_{a\wedge b} x_{a\vee b}(x_{a\bwedge b}^2 - 1)
    \end{align*}
\end{proof}

\begin{lemma}
    \label{lem-Ginfty-generates-Iinfty}
    $\Gcal_{\infty}$ generates $I_{\infty}$.
\end{lemma}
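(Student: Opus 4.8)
We need to show $\Gcal_\infty$ generates $I_\infty$. Recall that $I_\infty = \gen{x_a^2 - 1,\ x_a x_b - x_{a\wedge b}x_{a\vee b}\ :\ a,b\in[\nbf]}$, while $\Gcal_\infty$ consists of the squares $x_a^2-1$ together with the modified binomials $x_a x_b - x_{a\bwedge b}x_{a\bvee b}$ for those pairs $a<b$ with $a_i > b_i$ or $a_i = b_i < \nbf_i$ for some $i$. The previous lemma already tells us that every generator $x_a x_b - x_{a\bwedge b}x_{a\bvee b}$ of the stated form lies in $I_\infty$, so $\gen{\Gcal_\infty}\subseteq I_\infty$. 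Hence the content of this lemma is the reverse inclusion: every standard generator $x_a x_b - x_{a\wedge b}x_{a\vee b}$ of $I_\infty$ must be expressible in terms of $\Gcal_\infty$.

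**Key steps.** First, the squares $x_a^2-1$ are literally in $\Gcal_\infty$, so nothing is needed there. Second, take a generator $g = x_a x_b - x_{a\wedge b}x_{a\vee b}$ of $I_\infty$. If $a$ and $b$ differ in every coordinate (in particular if $a=b$ the generator is zero, or more generally $a_i\neq b_i$ for all $i$), then $a\wedge b = a\bwedge b$ and $a\vee b = a\bvee b$, so $g\in\Gcal_\infty$ directly (up to sign/ordering of $a,b$). Otherwise there is some coordinate set $J = \{i : a_i = b_i\}\neq\emptyset$. The idea is to rewrite $x_{a\wedge b}x_{a\vee b}$ by "flipping" the shared coordinates in $J$ up to $n_i$ using the relations $x_c^2 - 1$, reducing to a pair $(a', b')$ that now differs in coordinate $i$ (since one will have $a'_i = a_i$ and the other $n_i$, assuming $a_i\neq n_i$) — and then such a binomial is of the $\Gcal_\infty$ form. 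Concretely, one runs an induction on $|J|$ or on $\sum_{i\in J}(n_i - a_i)$: the base case $J=\emptyset$ is handled above, and the inductive step uses an algebraic identity of exactly the type displayed in the preceding lemma, namely expressing $x_a x_b - x_{a\wedge b}x_{a\vee b}$ as a combination of an element of $\Gcal_\infty$ (a modified binomial on a "smaller" pair), multiples of $x_c^2 - 1$, and a binomial on a pair with strictly larger $\sum_{i\in J} a_i$ (or smaller $|J|$) to which the inductive hypothesis applies. A symmetric point is that one may always assume $a < b$ after relabeling, since $g$ changes only by sign.

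**Expected main obstacle.** The bookkeeping in the inductive step is where the work lies: one must carefully verify that the auxiliary binomials appearing in the identity (the analogues of $x_{a\bwedge b}x_{a\vee b} - x_{a\wedge b}x_{a\bvee b}$ from the previous lemma) are themselves either in $\Gcal_\infty$ or fall under a strictly decreased induction parameter, and that the coefficients multiplying the $x_c^2-1$ terms are genuine polynomials. In particular one must confirm the identities $a\wedge b = (a\bwedge b)\wedge(a\vee b)$ and $a\bvee b = (a\bwedge b)\vee(a\vee b)$ continue to hold in the relevant configurations so that the telescoping actually closes up. A clean way to organize this, which I would adopt, is to prove by induction on $k = |\{i : a_i = b_i < n_i\}|$ that $x_a x_b - x_{a\wedge b}x_{a\vee b} \in \gen{\Gcal_\infty}$ for all $a,b$ with exactly $k$ such coordinates, peeling off one coordinate at a time with the displayed identity; the case $k=0$ is immediate from the definition of $\Gcal_\infty$ and the preceding lemma, and each reduction step lands in the span of $\Gcal_\infty$ plus a term covered by the inductive hypothesis.
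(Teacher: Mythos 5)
Your reading of what needs to be shown is right: the inclusion $\gen{\Gcal_\infty}\subseteq I_\infty$ follows from the preceding lemma, and the content is that each standard generator $x_a x_b - x_{a\wedge b}x_{a\vee b}$ lies in $\gen{\Gcal_\infty}$. However, the induction you propose is unnecessary, and as organized it has a real gap: the ``auxiliary binomials'' produced by flipping one shared coordinate at a time are of the form $x_c x_d - x_{c'}x_{d'}$ where $(c',d')$ is obtained from $(c,d)$ by raising a single shared coordinate to $n_i$; such a binomial is in general neither a standard generator nor an element of $\Gcal_\infty$, so your inductive hypothesis (which is stated only for standard generators) does not apply to it, and you would have to prove a separate membership statement for these intermediate binomials. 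You flag this yourself as ``where the work lies,'' but it is precisely the step that is not routine bookkeeping.

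The two identities you list as needing verification, $a\wedge b = (a\bwedge b)\wedge(a\vee b)$ and $a\bvee b = (a\bwedge b)\vee(a\vee b)$ (equivalently $(a\wedge b)\bwedge(a\vee b)=a\bwedge b$ and $(a\wedge b)\bvee(a\vee b)=a\bvee b$, checked coordinatewise by splitting into the cases $a_i\neq b_i$ and $a_i=b_i$), in fact make the whole induction superfluous: they show that the $\Gcal_\infty$-type binomials attached to the pairs $(a,b)$ and $(a\wedge b, a\vee b)$ have the \emph{same} second monomial, so
\[
x_a x_b - x_{a\wedge b} x_{a\vee b} \;=\; \bigl(x_a x_b - x_{a\bwedge b} x_{a\bvee b}\bigr) - \bigl(x_{a\wedge b} x_{a\vee b} - x_{(a\wedge b)\bwedge (a\vee b)} x_{(a\wedge b)\bvee (a\vee b)}\bigr),
\]
and each bracketed term is either $0$ or $\pm$ an element of $\Gcal_\infty$ (the degenerate cases being exactly when the pair is already reduced). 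This is the paper's entire proof. So: right ingredients, but you should assemble them into this single telescoping identity rather than an induction whose intermediate objects fall outside your hypothesis.
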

\begin{proof}
	It is sufficient to show that we can write $x_a x_b - x_{a\wedge b} x_{a\vee b}$ in terms of elements of $\Gcal_{\infty}$, but this is simply
    \[
        (x_a x_b - x_{a\bwedge b} x_{a\bvee b}) - (x_{a\wedge b} x_{a\vee b} - x_{(a\wedge b)\bwedge (a\vee b)} x_{(a\wedge b)\bvee (a\vee b)}).
    \]
\end{proof}

\begin{proposition}
    \label{pro-Ginfty-poly-div}
    Let $x_{a^1}\dots x_{a^k} \in \Rbb[X]$. For $j\in [d]$ and $i \in [n_j]$ put $s_{i,j} := \#\{ i = a^l_j \mid 1 \le l \le k\}$ and write $s_{i,j} = l_{i,j} + \delta_{i,j}$ with $l_{i,j}$ even and $\delta_{i,j} \in \{0, 1\}$.
    Let $l = k - \min_j \sum_i l_{i,j}$ and $S_j = \{ i \mid \delta_{i,j} = 1 \}$. For $1 \le r \le l$ we put $b^r_j = \min (S_j \setminus \cup_{s=1}^{r-1} \{ b^s_j \}) \cup \{ n_j \}$. Then $x_{a^1}\dots x_{a^k}$ reduces to $x_{b^1} \dots x_{b^l}$ modulo $\Gcal_\infty$.
\end{proposition}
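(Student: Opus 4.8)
The plan is to follow the same strategy as in Proposition \ref{pro-rk-1-poly-div}: reduce the multivariate statement to a collection of independent one-variable problems indexed by the directions $j \in [d]$, and then describe the normal form of a monomial under division by $\Gcal_\infty$ explicitly. The key point is that both types of generators of $\Gcal_\infty$ — the quadrics $x_a x_b - x_{a\bwedge b}x_{a\bvee b}$ and the binomials $x_a^2 - 1$ — act on a product of variables $x_{a^1}\cdots x_{a^k}$ by an operation that only affects the coordinate-tuples $(a^1_j, \dots, a^k_j)$ one direction at a time, and these operations commute across distinct directions. So I would first carry out the bookkeeping that makes this precise: fixing a direction $j$, the multiset of $j$-th coordinates of the factors is recorded by the counts $s_{i,j}$, and division by $\Gcal_\infty$ in that direction does two things — it sorts the indices (coming from the $\wedge/\vee$-relabeling, exactly as in \ref{pro-rk-1-poly-div}) and it cancels pairs of equal indices using $x_a^2 - 1$ (reducing each count $s_{i,j}$ modulo $2$ to its parity $\delta_{i,j}$). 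This is why $l_{i,j}$ (the even part) gets removed and only $\delta_{i,j}$ survives.

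Second, I would verify that the claimed monomial $x_{b^1}\cdots x_{b^l}$ is genuinely the grevlex normal form, i.e. that none of its factors is divisible by a leading term of $\Gcal_\infty$. The leading terms are $x_a^2$ and, for the quadrics in $\Gcal_\infty$, $x_a x_b$ with $a < b$ and $a_i > b_i$ or $a_i = b_i < n_i$ for some $i$. By construction the $b^r$ have strictly increasing (in the relevant sense) coordinates built from $S_j \cup \{n_j\}$: in each direction $j$, the surviving indices in $S_j$ are distinct, and once they are exhausted we pad with $n_j$. One must check that no two distinct factors $b^r, b^{r'}$ form a forbidden pair and that no factor is a square — this is where the precise definition of $\bwedge, \bvee$ (sending ties to $n_i$) and the careful choice of the reduction order in the definition of $b^r_j$ matter, and it is essentially a matter of unwinding the definitions.

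Third, one has to be a little careful about the number $l$ of surviving factors. In each direction $j$ the number of factors cannot drop below $\#S_j = k - \sum_i l_{i,j}$, but different directions may force different amounts of cancellation; the total number of surviving factors is governed by the direction in which the fewest pairs cancel, which is why $l = k - \min_j \sum_i l_{i,j}$. I would spell out that after performing all cancellations one is left with exactly $l$ factors, and that in each direction $j$ the $l$-tuple of $j$-coordinates consists of the $\#S_j$ elements of $S_j$ (in increasing order) followed by $l - \#S_j$ copies of $n_j$ — which is exactly the recursive description $b^r_j = \min\bigl((S_j \setminus \cup_{s<r}\{b^s_j\}) \cup \{n_j\}\bigr)$.

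The main obstacle I anticipate is the consistency of the padding across directions: a priori one could worry that padding direction $j$ with $n_j$ reintroduces a square or a forbidden quadric once combined with the coordinates from another direction $j'$. Making this airtight requires checking that the padded factors $b^r$ with $r > \#S_j$ genuinely reduce no further — i.e. that the grevlex leading term of any quadric relation involving them would have to come from a \emph{different} direction, where those same factors already differ. So the crux is a simultaneous-over-all-directions irreducibility check for the candidate normal form, together with confirming that the reduction process terminates at it (which follows from grevlex being a well-order, since each application of a generator strictly lowers the leading monomial). Once the one-variable picture and this cross-direction check are in place, the proposition follows by the same "operate coordinatewise, then invoke Buchberger-style uniqueness of normal forms" argument as Proposition \ref{pro-rk-1-poly-div}.
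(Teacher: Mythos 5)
Your proposal follows essentially the same two-stage argument as the paper's (much terser) proof: first reduce coordinatewise modulo the $\bwedge/\bvee$-relations exactly as in Proposition \ref{pro-rk-1-poly-div}, which sorts each direction and pushes tied coordinates up to $n_j$, and then cancel in pairs the resulting duplicate factors $x_{\nbf}$ via $x_{\nbf}^2-1$. The one loose point is your first paragraph's attribution of the per-direction parity reduction to $x_a^2-1$ (that relation only deletes full duplicate factors, and after the first stage all duplicates equal $x_{\nbf}$), but your third paragraph sorts this out correctly, so the argument is sound and matches the paper's approach.
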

\begin{proof}
    Similarly to \ref{pro-rk-1-poly-div}, one can show that $x_{a^1}\dots x_{a^k}$ modulo
    \[
        R = \{ x_a x_b - x_{a\bwedge b} x_{a\bvee b} \mid a, b \in [\nbf], a < b \text{ and } a_i > b_i \text{ or } a_i = b_i < n_i \text{ for some } i \}
    \]
    reduces to $x_{b^1}\dots x_{b^k}$ where $b^r_j = \min ((S_j \setminus \cup_{s=1}^{r-1} \{ b^s_j \}) \cup \{ n_j \})$ for $1 \le r \le k$.

    Now assume that $x_{a^1}\cdots x_{a^k}$ is already reduced modulo $R$. Clearly, it is only divisible by $x_\nbf^2 - 1$ and exactly $\frac12(\min_j \sum_i l_{i,j})$ times.
\end{proof}

\begin{corollary}
    $\Gcal_{\infty}$ is the reduced Gröbner basis of $I_{\infty}$.
\end{corollary}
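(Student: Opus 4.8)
The plan is to verify the two conditions of Buchberger's Criterion (Theorem \ref{thm-buchberger-criterion}) together with reducedness, reusing as much of the machinery already developed as possible. By Lemma \ref{lem-Ginfty-generates-Iinfty} we already know $\Gcal_\infty$ generates $I_\infty$, so it remains to check that every $S$-polynomial $S(g,g')$ for $g, g' \in \Gcal_\infty$ reduces to $0$ modulo $\Gcal_\infty$, and then to argue reducedness. I would split the $S$-polynomial computation into three cases according to whether $g, g'$ are taken from the ``square set'' $\{x_a^2-1\}$ or the ``binomial set'' $R$ of Proposition \ref{pro-Ginfty-poly-div}.

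First I would handle the pair where both generators are of the form $x_a^2 - 1$ and $x_b^2 - 1$ with $a \neq b$: their leading terms $x_a^2$ and $x_b^2$ are coprime, so the $S$-polynomial reduces to $0$ automatically by Buchberger's first criterion (coprime leading terms). Next, the mixed case $S(x_a^2 - 1, x_b x_c - x_{b\bwedge c} x_{b \bvee c})$: if $a \notin \{b, c\}$ the leading terms are again coprime and we are done; if say $a = b$, then $\operatorname{lcm}(x_a^2, x_a x_c) = x_a^2 x_c$ and the $S$-polynomial is $x_c(x_a^2 - 1) \cdot (\text{sign}) - x_a(x_a x_c - x_{a \bwedge c} x_{a \bvee c}) \cdot(\text{sign})$, which up to sign equals $x_a x_{a\bwedge c} x_{a \bvee c} - x_c$; this is a product of at most three variables, so Proposition \ref{pro-Ginfty-poly-div} applies and tells us exactly which normal form it reduces to — one checks that the multiset of indices on both sides produces the same reduced monomial (both sides have, in coordinate $j$, the multiset $\{a_j, a_j, c_j\}$ versus $\{a_j, c_j, c_j\}$-type comparisons giving the same parity vector and the same surviving single-factor indices), so the difference reduces to $0$. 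Finally, the case where both $g$ and $g'$ lie in $R$: this is the genuine analogue of Corollary \ref{cor-I0-Gröbner-basis}, and the cleanest route is to compute $S(g, g')$ when the two binomials share a variable (disjoint support gives coprime leading terms), obtaining a difference of two three-variable monomials, then invoke Proposition \ref{pro-Ginfty-poly-div} to see that both monomials have the same normal form because, coordinate by coordinate, the multiset of the three indices is preserved by the $\bwedge/\bvee$ substitution exactly as in the $I_0$ computation.

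The reducedness claim I would dispatch last: every element of $\Gcal_\infty$ has leading coefficient $1$ by construction, so I only need that no monomial appearing in a generator is divisible by the leading monomial of another generator. The leading monomials are the $x_a^2$ (for all $a$) and the $x_a x_b$ with $a < b$ in the prescribed index condition; the trailing monomials are $1$ and the $x_{a\bwedge b}x_{a \bvee b}$. A monomial $x_{a \bwedge b} x_{a\bvee b}$ is divisible by some $x_c^2$ only if $a \bwedge b = a \bvee b = c$, which by the definition of $\bwedge, \bvee$ forces $a = b$, excluded; and it is divisible by some leading $x_c x_e$ with $c<e$ only if $\{c, e\} = \{a\bwedge b, a \bvee b\}$, but one checks the pair $(a \bwedge b, a\bvee b)$ never satisfies the defining index condition for membership of a leading monomial in the second set (in every coordinate $i$ one has $(a\bwedge b)_i \le (a\bvee b)_i$, and in a coordinate where $a_i = b_i < n_i$ both become $n_i$ so they are equal, never strictly decreasing, and never of the form ``equal but $< n_i$''), so no such divisibility occurs; similarly the constant $1$ in $x_a^2 - 1$ is divisible by nothing.

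The main obstacle I anticipate is the bookkeeping in Proposition \ref{pro-Ginfty-poly-div}: the reduction there tracks parities $\delta_{i,j}$ of occurrence-counts per coordinate, and to conclude that two three-variable monomials have the same normal form I must confirm that replacing $(x_a, x_b)$ by $(x_{a \bwedge b}, x_{a\bvee b})$ leaves, in each coordinate $j$, the full multiset $\{a_j, b_j, c_j\}$ unchanged — which is true precisely because $\{a_j, b_j\} = \{(a \bwedge b)_j, (a \bvee b)_j\}$ when $a_j \neq b_j$, while when $a_j = b_j$ the substitution turns $\{a_j, a_j\}$ into $\{n_j, n_j\}$, changing the multiset but in a way that is absorbed by the $x_{n_j}^2 - 1$ reduction (the parity $\delta_{\cdot, j}$ is unchanged because two entries are removed and two are added). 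Getting this parity/multiset argument stated cleanly, rather than the raw binomial manipulation, is where the real care is needed; everything else is routine once the $I_0$ case (Corollary \ref{cor-I0-Gröbner-basis}) is taken as a template.
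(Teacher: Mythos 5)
Your proposal is correct and follows essentially the same route as the paper: cite Lemma \ref{lem-Ginfty-generates-Iinfty} for generation, dismiss the coprime-leading-term pairs, reduce the two nontrivial $S$-polynomials ($S(x_a^2-1,\, x_ax_b - x_{a\bwedge b}x_{a\bvee b})$ and $S$ of two overlapping binomials) to zero via Proposition \ref{pro-Ginfty-poly-div}, and check reducedness. Your more explicit multiset/parity bookkeeping and the detailed verification that no trailing monomial $x_{a\bwedge b}x_{a\bvee b}$ is divisible by a leading monomial simply spell out what the paper leaves as ``one easily checks.''
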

\begin{proof}
    \ref{lem-Ginfty-generates-Iinfty} asserts that $\Gcal_\infty$ generates $I_\infty$ and by \ref{pro-Ginfty-poly-div} computation of the S-polynomials gives
    \[
        S(x_a^2 - 1, x_a x_b - x_{a\bwedge b} x_{a\bvee b})
        = x_a x_{a\bwedge b} x_{a\bvee b} - x_b \rightarrow_{\Gcal_\infty} 0,
    \]
    since we have (in the notation of \ref{pro-Ginfty-poly-div}) $S_j = \{ b_j \}$ and $l = 2$ for $x_a x_{a\bwedge b} x_{a\bvee b}$,
    and similarly
    \[
        S(x_a x_b - x_{a\bwedge b} x_{a\bvee b}, x_a x_c - x_{a\bwedge c} x_{a\bvee c})
        = x_b x_{a\bwedge c} x_{a\bvee c} - x_c x_{a\bwedge b} x_{a\bvee b} \rightarrow_{\Gcal_\infty} 0
    \]    
    Hence, $\Gcal_{\infty}$ is a Gröbner basis for $I_{\infty}$. One easily checks that $G_{\infty}$ is also reduced.
\end{proof}

\subsection{The moment matrix for theta-1 body}\label{section: moment matrix for theta-1}

The Gröbner basis enables us to 
characterize the moment matrix for theta-k bodies of $I_p$.
In particular, 
we give the characterization for theta-1 bodies of $I_2$ and $I_\infty$.
Fix the tensor space as $\R^{n_1\times\cdots \times n_d}$. 
Denote $[\nbf] = [n_1]\times\cdots\times[n_d]$ and $N:= n_1\cdots n_d$.
In both cases, 
$\Bcal_1 = \set{1, x_a, a\in[\nbf]}$ spans $(\R[X]/I_p)_{\leq 1}$.
Abbreviate $I = I_p$. 
The moment matrix for theta-1 body represents a quadratic form on $(\R[X]/I)_{\leq 1}$.
We form it as
$M = (M_{a,b})\in\R^{(N+1)\times (N+1)}$ for $a, b\in\Bcal_1$.
We use index $0$ for constants, that is $x_0 = 1\in\Bcal_1$.
For $p = 2$, the moment matrix is then symmetric and
\begin{equation*}
    \Mcal_2 = \set{ M\in\R^{(N+1)\times (N+1)}:
    M_{0,0} = \sum_{a\in [\nbf]}M_{a,a},
    M_{a, b} = M_{a\wedge b, a\vee b}, \forall a\not = b\in [\nbf].}
\end{equation*}
For $p = \infty$, the moment matrix is also symmetric and
\begin{equation*}
    \Mcal_\infty = \set{ M\in\R^{(N+1)\times (N+1)}:
    M_{0,0} = M_{a,a}, 
    M_{a, b} = M_{\wedge(a, b), \vee(a,b)}, \forall a\not = b\in [\nbf]}.
\end{equation*}
Abbreviate both as $\Mcal$. 
Computing the corresponding theta-1 norm of $x\in\R^{\nbf}$ can be formulated as follows
\begin{equation*}
    \min M_{0,0}\quad
    \text{s.t. } M\in\Mcal, M\succeq 0,
    M_{0,a} = x_a, \forall a\in[\nbf]
\end{equation*}
This motivates the term \emph{reduced pair}. 
\begin{definition}\label{def: reduced-pair}
    For any pair $(a,b)\subset [\nbf]$ that $a\not = b$,
we say it is \emph{reduced} if $(a,b) = (a\wedge b, a\vee b)$ up to change of positions.
And we call $(a\wedge b, a\vee b)$ the \emph{reduced form} of $(a,b)$.
Similarly, an analog is defined for $p=\infty$.
\end{definition}

\section{Geometry of Theta bodies}\label{section: geometry}
This section investigates the geometric properties of the bodies $\thetabody{k}(I_p)$, 
specifically focusing on (real) algebraic and convex geometry.
We pay particular attention to cases where $p=1,2,\infty$.
First, we address the real reduced-ness of the ideals in Section \ref{section: reducedness}.
Recall theorem \ref{Theorem: theta body projective spectrahedra},
\begin{theorem}
    If $\Sigma_k(I)$ is closed, then
    \begin{equation}
        \Thetabody{k}(I) = \thetabody{k}(I):= \cl \pi_x((\Sigma_k(I)^*)^c).
    \end{equation}  
\end{theorem}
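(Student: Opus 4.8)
The statement is \cite[Theorem 2.8]{Gouveia_Parrilo_Thomas_2010}; the plan is to reconstruct its proof as a short chain of identities between convex cones, in which the hypothesis that $\Sigma_k(I)$ is closed is used exactly once. Since $\thetabody{k}(I)\subseteq\Thetabody{k}(I)$ holds unconditionally (this is \eqref{inequality: thetabody-underline}), only the reverse inclusion is really at stake, but the chain below will deliver the full equality at no extra cost.

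I would work in the finite-dimensional spaces $V_1\subseteq V_{2k}$ and their duals, write $\rho\colon V_{2k}^*\to V_1^*$ for the restriction of functionals (its adjoint is the inclusion $V_1\hookrightarrow V_{2k}$), and put $K:=\Sigma_k(I)^*\subseteq V_{2k}^*$, a closed convex cone. First I would unwind the operations $(\cdot)^c$ and $\pi_x$: recalling that $\pi_x$ is the linear — hence continuous — map sending a truncated moment sequence $l$ to the point $(l(x_1),\dots,l(x_n))$, which on the affine slice $\{l(1)=1\}$ agrees with the standard identification of $V_1^*$ with $\R\times\R^n$ underlying $(\cdot)^c$, a routine unwinding shows $\pi_x((\Sigma_k(I)^*)^c)=(\rho(K))^c$, whereas $\Thetabody{k}(I)=((\Sigma_k(I)\cap V_1)^*)^c$ by \eqref{definition: upper theta body}. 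So it suffices to prove $\cl((\rho(K))^c)=((\Sigma_k(I)\cap V_1)^*)^c$.

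Next I would determine the closure of the cone $\rho(K)$. Since the dual of the image of a linear map is the preimage of the dual under the adjoint, $(\rho(K))^*=V_1\cap K^*$. Now $\Sigma_k(I)$ is a convex cone containing the origin, so the bipolar theorem gives $K^*=\Sigma_k(I)^{**}=\cl(\Sigma_k(I))$, and \emph{this is the only point where the hypothesis is used}: closedness of $\Sigma_k(I)$ makes $K^*=\Sigma_k(I)$, hence $(\rho(K))^*=\Sigma_k(I)\cap V_1$. Dualizing once more, the bipolar theorem for the convex cone $\rho(K)$ yields $\cl(\rho(K))=(\Sigma_k(I)\cap V_1)^*$ (dual in $V_1^*$). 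To finish I would invoke the elementary fact that $(\cl(C))^c=\cl(C^c)$ for every convex cone $C$ containing the origin — the inclusion $\supseteq$ is immediate since $(\cl(C))^c$ is closed, and for $\subseteq$ one writes $(1,x)=\lim_j(t_j,y_j)$ with $(t_j,y_j)\in C$, notes $t_j\to1>0$, and observes $(1,y_j/t_j)\in C$ with $y_j/t_j\to x$ — applied to $C=\rho(K)$. Stringing the identities together,
\[
\thetabody{k}(I)=\cl((\rho(K))^c)=(\cl(\rho(K)))^c=((\Sigma_k(I)\cap V_1)^*)^c=\Thetabody{k}(I).
\]

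The only real obstacle is bookkeeping: translating between the abstract dual-space picture ($\rho$, $K$, the slice $\{l(1)=1\}$) and the concrete one (moment matrices, the operations $(\cdot)^c$ and $\pi_x$, points of $\R^n$) carefully enough that the rescaling step in the last identity is legitimate — in particular that $\pi_x$ restricted to that slice does not collapse the closure. Everything else, including the single use of the hypothesis via the bipolar theorem, is routine; indeed, without closedness the same computation gives only $K^*=\cl(\Sigma_k(I))$ and hence merely the inclusion \eqref{inequality: thetabody-underline}.
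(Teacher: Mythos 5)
Your argument is correct. The paper itself gives no proof here — it simply restates \cite[Theorem 2.8]{Gouveia_Parrilo_Thomas_2010} — and your reconstruction (identifying $\pi_x((\Sigma_k(I)^*)^c)$ with $(\rho(K))^c$, computing $(\rho(K))^*=\Sigma_k(I)\cap V_1$ via the adjoint and the bipolar theorem, using closedness exactly once to drop the closure of $\Sigma_k(I)$, and commuting $(\cdot)^c$ with closure) is precisely the standard duality argument underlying the cited result.
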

$\Sigma_k(I)$ is closed if $I$ is radical or real radical, see \cite[Corollary 2.9]{Gouveia_Parrilo_Thomas_2010} and \cite[Proposition 2.6]{Powers_Scheiderer_2001}.
We will prove in Section \ref{section: reducedness} that $I_1, I_\infty$
are real reduced.
However, this remains unverified for other $I_p$.

In Section \ref{section: Theta exactness}, we study the convergence of theta bodies. 
It turns out that $\thetabody{k}(I_1)$ coincide with $l_1$-norm; 
hence it is theta-1 exact due to its real reducedness.
The theta body hierarchy of $I_\infty$ is also finitely convergent,
with respect to the tensor size.
However, it is not clear whether $I_2$ is theta-exact. 
Although in the matrix case, $I_2$ is in fact theta-1 exact,
this may not hold in general. 
Otherwise, we find an algorithm to compute the nuclear norm efficiently. 

In Section \ref{section: symmetry}-\ref{section: extreme points},
we explore the symmetries and extreme points of the theta bodies. 
Namely, theorem \ref{theorem: symmetry} and theorem \ref{theorem: extreme point}
will be discussed here. 
These results imply the basic intuition that the theta bodies also define norms, 
and they are not far from the original norm.

Finally, in Section \ref{section: dual body and k-sos},
we provide the semidefinite program characterization of the dual bodies of theta bodies,
along with $\Sigma_k(I)$.

We use both $\Vcal_\R(I)$ and $\Vcal(I)$ as the real variety of $I$ in $\R^n$ and
$\Vcal_\C(I)$ the complex variety of $I$ in $\C^n$.
\subsection{Reducedness of $I_p$}\label{section: reducedness}

A polynomial $f$ is equivalent to a $k$-sos modulo $I$ 
implies that $f$ is the same as a $k$-sos polynomial on the variety $\Vcal(I)$. 
This suggests considering the vanishing ideal of the variety $\Vcal(I)$
--- the ideal containing all polynomials vanishing on the variety.
Below we list definitions from real algebraic geometry.
\begin{definition}\label{Definition: radical}
    Let $I\subset \R[X]$ be an ideal.
    \begin{itemize}
        \item \emph{Radical of $I$},
        $
        \sqrt{I} := \set{f\in \R[X]: \exists n\in\N, f^n\in I}.    
        $
        \item \emph{Real radical of $I$},
        $
        \Rradical{I}:= \set{f\in \R[X]: \exists n,m\in\N, g_i\in\R[X], f^{2n} + \sum_{i}^m g_i^2\in I}.
        $
        \item We say $I$ is \emph{radical} or \emph{reduced} if $I = \sqrt{I}$, and similarly
        \emph{real radical} or \emph{real reduced} if $I = \Rradical{I}$.
    \end{itemize}
\end{definition}
Real Nullstellensatz states that the real radical of $I\subset\R[X]$ is indeed the
vanishing ideal on $\Vcal(I)$.
We show that in the following both $I_1, I_\infty$ are real radical.

\begin{lemma}[Seidenberg's Lemma]\cite[Proposition 3.7.15]{Kreuzer_Robbiano_2000}
    Fix a field $K$. Let $I\subset K[x_1,...,x_n]$ be an ideal defining a finite variety.
    Suppose for any $i\in \set{1,...,n}$, there exists a non-zero polynomial 
    $f_i\in I\cap K[x_i]$ such that $f_i, f_i'$ are prime to each other in $\R[x_i]$.
    Then $I$ is radical.
\end{lemma}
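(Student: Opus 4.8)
The plan is to prove the stronger statement that the residue ring $K[x_1,\dots,x_n]/I$ is a finite product of fields; radicality of $I$ is then immediate. Set $J = \gen{f_1,\dots,f_n} \subseteq I$ and $A = K[x_1,\dots,x_n]/J$, and first analyse $A$.

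I would begin from the decomposition $A \cong \bigotimes_{i=1}^n K[x_i]/\gen{f_i}$ of $K$-algebras. Each factor has $K$-dimension $\deg f_i < \infty$, so $A$ is finite-dimensional over $K$, hence Artinian (this also recovers the finiteness of $\Vcal(J)\supseteq \Vcal(I)$). Now the hypothesis enters: $\gcd(f_i, f_i') = 1$ forces $f_i$ to be squarefree and, moreover, every irreducible factor $g$ of $f_i$ to satisfy $g' \neq 0$, i.e. to be separable over $K$. By the Chinese Remainder Theorem $K[x_i]/\gen{f_i}$ is therefore a finite product of finite separable field extensions of $K$, that is, an étale $K$-algebra. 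Since a tensor product over $K$ of étale $K$-algebras is again étale, $A$ is étale over $K$; in particular $A$ is reduced. A reduced Artinian ring is a finite product of fields, so $A \cong \prod_j L_j$.

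Next I would descend from $A$ to $K[x_1,\dots,x_n]/I$. Every ideal of a finite product of fields $\prod_j L_j$ has the form $\prod_{j\in S} L_j$ for some index set $S$, since each factor is a simple ring; hence $I/J$ is of this shape and
\[
    K[x_1,\dots,x_n]/I \;\cong\; A/(I/J) \;\cong\; \prod_{j\notin S} L_j ,
\]
which is again a product of fields, hence reduced. This says exactly that $\sqrt{I} = I$.

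The one genuine subtlety — and where I expect the argument to need care — is that one must \emph{not} deduce radicality of $I$ from radicality of the subideal $J$ alone, since a quotient of a reduced ring need not be reduced; the proof really uses the finer fact that $K[x]/J$ is a product of \emph{fields}, all of whose quotients are automatically reduced. And that finer fact, over a possibly imperfect base field $K$, fails for merely squarefree $f_i$ (a tensor product of inseparable field extensions can be non-reduced); it is the coprimality of $f_i$ with its derivative, hence separability, that makes the tensor-product step go through. If one wishes to avoid invoking étale-algebra generalities, the same step can be carried out by hand: expand $A$ via CRT into a tensor product of separable primitive extensions $K(\theta_i)$ and verify reducedness directly using separating transcendence bases (or the primitive-element theorem together with the fact that a compositum of separable extensions is separable).
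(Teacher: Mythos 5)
Your proof is correct. Note first that the paper does not prove this lemma at all: it is quoted verbatim from Kreuzer--Robbiano (Proposition 3.7.15) and used as a black box, so there is no in-paper argument to compare against. Your route — writing $A = K[x_1,\dots,x_n]/\gen{f_1,\dots,f_n} \cong \bigotimes_i K[x_i]/\gen{f_i}$, observing that $\gcd(f_i,f_i')=1$ makes each factor an \'etale $K$-algebra, concluding that $A$ is \'etale hence a finite product of fields, and then passing to the quotient by $I/J$ — is a clean and complete proof, and it is structurally different from the inductive, one-variable-at-a-time argument given in the cited textbook. Each step checks out: the tensor decomposition of $A$, the equivalence of $\gcd(f,f')=1$ with ``squarefree with separable irreducible factors,'' the stability of \'etaleness under tensor product over $K$, and the fact that every quotient of a finite product of fields is again such a product. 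You are also right to flag the two genuine pitfalls: one cannot conclude from reducedness of $J$ alone (quotients of reduced rings need not be reduced), and over an imperfect field mere squarefreeness of the $f_i$ would not suffice for the tensor-product step — it is exactly the separability encoded in $\gcd(f_i,f_i')=1$ that is needed. What your approach buys is a short, structural proof at the cost of invoking \'etale-algebra generalities; the textbook induction is more elementary but longer. Incidentally, the statement in the paper has a typo ($\R[x_i]$ should read $K[x_i]$), which your proof implicitly and correctly repairs.
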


\begin{corollary}
    Let $p = 1,\infty$, recall that $\Gcal_p$ is the Gröbner basis of $I_p$.
    Define $I_\C(\Gcal_p) = \gen{\Gcal_p}$ as the ideal generated by $\Gcal_p$ in $\C[X]$.
    Then $I_\C(\Gcal_p)$ is radical.
\end{corollary}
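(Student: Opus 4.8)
The plan is to apply Seidenberg's Lemma over the field $K = \C$ to the ideal $I_\C(\Gcal_p) = \gen{\Gcal_p}\subset \C[X]$. Thus I must check two things: that $\Vcal_\C(I_\C(\Gcal_p))$ is a finite variety, and that for every variable $x_a$, $a\in[\nbf]$, there is a nonzero univariate polynomial $f_a\in I_\C(\Gcal_p)\cap\C[x_a]$ that is coprime to its derivative, i.e.\ squarefree.

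For $p=\infty$ both points are immediate from the generators $x_a^2-1\in\Gcal_\infty$: any point of $\Vcal_\C$ has every coordinate equal to $\pm1$, so $\#\Vcal_\C\leq 2^N<\infty$, and $x_a^2-1=(x_a-1)(x_a+1)$ is squarefree, hence coprime to $2x_a$. Taking $f_a=x_a^2-1$ disposes of this case.

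For $p=1$ I would first argue that $x_a^3-x_a\in\gen{\Gcal_1}$ for \emph{every} $a\in[\nbf]$. For $a>(1,\dots,1)$ this polynomial is already a generator. For the excluded index $a=(1,\dots,1)$ it follows from the identity
\[
    x_{1\dots1}\Big(\sum_{b\in[\nbf]} x_b^2 - 1\Big) = \big(x_{1\dots1}^3 - x_{1\dots1}\big) + \sum_{b\neq(1,\dots,1)} x_b\,\big(x_{1\dots1}x_b\big),
\]
in which the left-hand side and each summand $x_b(x_{1\dots1}x_b)$ on the right lie in $\gen{\Gcal_1}$ (using the monomial generators $x_{1\dots1}x_b\in\Gcal_1$); hence so does $x_{1\dots1}^3-x_{1\dots1}$. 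Consequently $x_a^3-x_a\in\gen{\Gcal_1}\cap\C[x_a]$ for all $a$; this polynomial has the three distinct roots $0,1,-1$, so it is squarefree (coprime to $3x_a^2-1$) and it forces every coordinate of a point of $\Vcal_\C$ into $\{0,\pm1\}$, giving $\#\Vcal_\C\leq 3^N<\infty$. Setting $f_a=x_a^3-x_a$, Seidenberg's Lemma applies and yields that $I_\C(\Gcal_1)$ is radical.

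The argument is essentially a direct inspection of the generating sets, so there is no serious obstacle; the only step that needs a line of computation is the membership $x_{1\dots1}^3-x_{1\dots1}\in\gen{\Gcal_1}$ for the single index $(1,\dots,1)$ that the cubic generators of $\Gcal_1$ omit, which is handled by the displayed identity above. In both cases the conclusion of Seidenberg's Lemma is precisely the assertion that $I_\C(\Gcal_p)$ is radical.
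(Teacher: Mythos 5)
Your proposal is correct and takes essentially the same route as the paper: both apply Seidenberg's Lemma over $\C$ with the squarefree univariate polynomials $f_a = x_a^2-1$ for $p=\infty$ and $f_a = x_a^3-x_a$ for $p=1$, after noting finiteness of $\Vcal_\C$. Your explicit verification that $x_{1\dots1}^3-x_{1\dots1}\in\gen{\Gcal_1}$ is a detail the paper leaves implicit (it also follows immediately from the fact that $x_a^3-x_a$ is among the defining generators of $I_1$ for every $a$ and that $\Gcal_1$ generates $I_1$).
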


\begin{proof}
    For $p = \infty (p = 1)$, it is obvious that $\Vcal_\C(I_p) = \Vcal_(I_p)$ is finite.
    For each index $a$, consider $f_a(x_a) = x_a^2-1 \in I_p\cap \C[x_a]$ 
    ($f_a(x_a) = x_a^3-x_a$).
    Then $f_a, f_a'$ are prime to each other in $\C[x_a]$. 
    From Seidenberg's Lemma, it follows $I_\C(\Gcal_p)$ is radical.
\end{proof}

\begin{proposition}
    Let $p = 1, \infty$, the ideal $I_p\subset\R[X]$ is real radical.
\end{proposition}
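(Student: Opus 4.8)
\emph{Proof proposal.} The plan is to deduce real radicality from the complex radicality established in the preceding corollary, together with the observation that $I_p$ has no non-real zeros. Throughout, $p\in\{1,\infty\}$ and $N = n_1\cdots n_d$; for a set $V\subset \R^N$ or $V\subset \C^N$ I write $\mathcal{I}_\R(V)$, $\mathcal{I}_\C(V)$ for its vanishing ideal in $\R[X]$, $\C[X]$ respectively.

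First I would note that every complex zero of $I_p$ is already real: for $p = \infty$ the generators $x_a^2 - 1$ force each coordinate into $\{\pm 1\}$, and for $p = 1$ the generators $x_a^3 - x_a$ force each coordinate into $\{-1,0,1\}$; in both cases $\Vcal_\C(I_p)\subset \R^N$, so $\Vcal_\C(I_p) = \Vcal_\R(I_p) =: V$, a finite set of real points. In particular $\mathcal{I}_\R(V) = \mathcal{I}_\C(V)\cap \R[X]$, since a real polynomial vanishes on $V$ viewed inside $\R^N$ if and only if it vanishes on $V$ viewed inside $\C^N$.

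Next I would pin down the relation between the real ideal $I_p$ and its complexification $I_\C(\Gcal_p) = \gen{\Gcal_p}$ in $\C[X]$. Since $\Gcal_p$ has rational coefficients, Buchberger's Criterion and the division algorithm are insensitive to the base field: $\Gcal_p$ is also a Gröbner basis of $I_\C(\Gcal_p)$ with the same leading terms, and for $f\in \R[X]$ the reduction $f \rightarrow_{\Gcal_p} r$ never leaves $\R[X]$. Hence $f\in I_p$ if and only if $f\in I_\C(\Gcal_p)$, i.e. $I_p = I_\C(\Gcal_p)\cap \R[X]$.

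Finally I would invoke the two Nullstellensätze. By the previous corollary $I_\C(\Gcal_p)$ is radical in $\C[X]$, so Hilbert's Nullstellensatz gives $I_\C(\Gcal_p) = \mathcal{I}_\C(\Vcal_\C(I_p)) = \mathcal{I}_\C(V)$. Intersecting with $\R[X]$ and combining with the two identities above yields $I_p = \mathcal{I}_\C(V)\cap \R[X] = \mathcal{I}_\R(V) = \mathcal{I}_\R(\Vcal_\R(I_p))$, and by the Real Nullstellensatz the right-hand side is exactly $\Rradical{I_p}$. Thus $I_p = \Rradical{I_p}$, i.e. $I_p$ is real radical. The only genuinely delicate point, which I would write out carefully, is the descent identity $I_p = I_\C(\Gcal_p)\cap \R[X]$; the ``no non-real zeros'' observation is immediate from the univariate generators, and the rest is a direct application of the two Nullstellensätze. (Alternatively one could argue entirely over $\R$ via a Seidenberg-type criterion for real radicality, but routing through the already-proven complex statement is shorter.)
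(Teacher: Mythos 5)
Your proposal is correct and follows essentially the same route as the paper: observe that all complex zeros of $I_p$ are real, invoke the radicality of $I_\C(\Gcal_p)$ from the preceding corollary together with Hilbert's Nullstellensatz, descend to $\R[X]$ via the field-independence of the Gröbner basis and the division algorithm, and conclude with the Real Nullstellensatz. Your write-up is if anything slightly more careful about the descent identity $I_p = I_\C(\Gcal_p)\cap\R[X]$, which the paper treats more briefly.
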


\begin{proof}
    Let $\Gcal = \Gcal_p$, $I_\C = I_\C(\Gcal_p), I = I_p$.
    Observe that $\Vcal_\C(I_\C) = \Vcal(I)$, i.e.
    all complex solutions of $\Gcal_p$ are real.
    Hilbert's Nullstellensatz implies for any $f\in\R[X]$ vanishing on $\Vcal(I) = \Vcal_\C(I)$,
    $f\in I_\C$. Since the Gröbner basis is independent of the field
    and $\Gcal\subset \R[X]$, 
    the division algorithm results in an expression of $f$ that
    $f=\sum_{g(x)\in \Gcal}c_g(x)g(x)$
    with $c_g(x)$ are all real and it follows that $f\in I\subset \R[X]$.
    Therefore $I$ is in fact the vanishing ideal of its real variety. 
    By Real Nullstellensatz, $I$ is real radical.
\end{proof}

Although we are unable to establish the (real) reducedness of $I_2$,
the following lemma shows that $\thetabody{1}(I_2)\supset \thetabody{1}(\Rradical{I_2})$.
With further observations regarding symmetries and convex geometry of $\thetabody{1}(I_2)$,
it remains meaningful to analyze theta bodies associated with $I_2$. 
Fortunately, the numerical results based on $I_2$ suggest that we do not miss much information
in this relaxation.

\begin{lemma}
    Let $I\subset J\subset\R[X]$ be ideals, then $\thetabody{k}(I)\supset \thetabody{k}(J)$,
    $\Thetabody{k}(I)\supset \Thetabody{k}(J)$.
\end{lemma}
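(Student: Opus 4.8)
The plan is to unwind the definition of $\thetabody{k}$ from Theorem~\ref{Theorem: theta body projective spectrahedra} and exploit the fact that enlarging the ideal shrinks everything in sight until the final dual flips the inclusion back. Concretely, since $I \subset J$, every polynomial lying in $J$ also lies in $I$, so a $k$-sos representation modulo $J$ is automatically a $k$-sos representation modulo $I$; that is, $\Sigma_k(I) \supset \Sigma_k(J)$ as subsets of the quotient (after the natural identification — see the obstacle paragraph below). Likewise $P(I) \supset P(J)$ for the half-space bodies. Taking duals reverses the inclusion, so $\Sigma_k(I)^* \subset \Sigma_k(J)^*$; intersecting with the affine slice $x_0 = 1$ (the $^c$ operator) and projecting onto the $x$-coordinates preserves inclusions, and taking closures does too. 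Hence $\cl\pi_x((\Sigma_k(I)^*)^c) \supset \cl\pi_x((\Sigma_k(J)^*)^c)$, which is exactly $\thetabody{k}(I) \supset \thetabody{k}(J)$. The argument for $\Thetabody{k}$ is the same, using $\Sigma_k(I) \cap V_1 \supset \Sigma_k(J) \cap V_1$ and the formula $\Thetabody{k}(I) = ((\Sigma_k(I)\cap V_1)^*)^c$ from \eqref{definition: upper theta body}.

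First I would set up the two quotient maps $\R[X]/I \twoheadrightarrow \R[X]/J$ and record that they induce dual inclusions $(\R[X]/J)^* \hookrightarrow (\R[X]/I)^*$, compatibly with the truncations $V_{2k}$ and $V_1$ (since the degree in the quotient can only drop, $(\R[X]/J)_{\le k}$ embeds into $(\R[X]/I)_{\le k}$ only after a cardinality check — this is the subtle point). Then I would state the two containments $\Sigma_k(I) \supseteq \Sigma_k(J)$ and $\Sigma_k(I)\cap V_1 \supseteq \Sigma_k(J)\cap V_1$ inside a common ambient space, apply the inclusion-reversing property of the dual operator (already invoked right after \eqref{inequality: thetabody-underline}), and finally note that $K \mapsto K^c$, $K \mapsto \pi_x(K)$, and $K \mapsto \cl K$ are all inclusion-preserving. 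Chaining these gives both statements.

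The main obstacle is purely bookkeeping about ambient spaces: the quotients $\R[X]/I$ and $\R[X]/J$ are genuinely different vector spaces, so $\Sigma_k(I)$ and $\Sigma_k(J)$ do not literally sit inside one space, and ``$\supset$'' has to be read through the surjection $\R[X]/I \to \R[X]/J$. The clean way to handle this is to phrase everything back in $\R[X]^*$ (or $V_{2k}^* \subset \R[X]_{\le 2k}^*$): a functional $\ell$ on $V_{2k}(J)$ extends to one on $\R[X]_{\le 2k}$ vanishing on $J \supset I$, hence descends to $V_{2k}(I)$, and this identification realizes $\Sigma_k(J)^*$ as a subset of $\Sigma_k(I)^*$ inside the common space $\R[X]_{\le 2k}^*$. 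Once that identification is fixed, all the inclusions are immediate and the proof is a two-line chain; I would spend most of the written proof making this embedding explicit and checking it is compatible with the $x$-projection (which only depends on the degree-one part, where $I$ and $J$ may still differ, so one should restrict to the ambient $\R^n$ rather than to $V_1$ of either quotient).

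\begin{proof}
    We work in the common ambient spaces $\R^n \supset V_1 \subset \R[X]_{\le 1}$ and $\R[X]_{\le 2k}^*$. Since $I \subset J$, any $f \in \R[X]$ that is $k$-sos modulo $J$ is also $k$-sos modulo $I$; thus, identifying a functional on the quotient with its extension to $\R[X]$ vanishing on the ideal, we obtain $\Sigma_k(J)^* \subset \Sigma_k(I)^*$ and $(\Sigma_k(J)\cap V_1)^* \subset (\Sigma_k(I)\cap V_1)^*$ as subsets of $\R[X]_{\le 2k}^*$, respectively $V_1^*$, by the inclusion-reversing property of the dual operator. The operators $K \mapsto K^c$, $K \mapsto \pi_x(K)$ and $K \mapsto \cl K$ all preserve inclusions, so
    \[
        \thetabody{k}(J) = \cl\pi_x((\Sigma_k(J)^*)^c) \subset \cl\pi_x((\Sigma_k(I)^*)^c) = \thetabody{k}(I),
    \]
    and likewise $\Thetabody{k}(J) = ((\Sigma_k(J)\cap V_1)^*)^c \subset ((\Sigma_k(I)\cap V_1)^*)^c = \Thetabody{k}(I)$.
\end{proof}
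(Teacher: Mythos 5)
Your final containments are the right ones, but the chain of implications you use to reach them contains two directional errors that happen to cancel. First, from $I\subset J$ you assert that ``every polynomial lying in $J$ also lies in $I$'' and hence that a $k$-sos representation modulo $J$ is automatically one modulo $I$, i.e. $\Sigma_k(I)\supset\Sigma_k(J)$. This is backwards: $I\subset J$ means that $f-\sum_i h_i^2\in I$ implies $f-\sum_i h_i^2\in J$, so being $k$-sos modulo $I$ implies being $k$-sos modulo $J$, giving $\Sigma_k(I)\subset\Sigma_k(J)$ --- this one-line observation is essentially the paper's entire proof. Second, having asserted $\Sigma_k(J)\subset\Sigma_k(I)$, you conclude $\Sigma_k(J)^*\subset\Sigma_k(I)^*$ ``by the inclusion-reversing property of the dual operator''; but reversal applied to your premise would give $\Sigma_k(J)^*\supset\Sigma_k(I)^*$ and hence $\thetabody{k}(J)\supset\thetabody{k}(I)$, the opposite of the lemma. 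The two mistakes compensate and you land on the correct $\Sigma_k(J)^*\subset\Sigma_k(I)^*$, but the argument as written is not valid.

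Once the first containment is corrected to $\Sigma_k(I)\subset\Sigma_k(J)$, your architecture goes through: dualizing reverses it to $\Sigma_k(J)^*\subset\Sigma_k(I)^*$ inside the common ambient space you set up, and the slice $^c$, the projection $\pi_x$ and the closure all preserve inclusions. Note also that for the upper theta bodies no dualization is needed at all: $\Sigma_k(I)\cap V_1\subset\Sigma_k(J)\cap V_1$ means $\Thetabody{k}(J)$ is cut out by more nonnegativity constraints than $\Thetabody{k}(I)$, hence is smaller. Your care about the identification $(\R[X]/J)^*\hookrightarrow(\R[X]/I)^*$ via extension of functionals is a worthwhile point that the paper leaves implicit, and it is handled correctly; only the direction of the sos containment and the subsequent dual step need fixing.
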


\begin{proof}
    By construction, if a polynomial $f$ is $k$-sos modulo $I$,
    then it is $k$-sos modulo $J$. It follows $\thetabody{k}(I)\supset \thetabody{k}(J)$.
    A similar argument applies to upper theta bodies.
\end{proof}

\subsection{Theta-Exactness}\label{section: Theta exactness}

We show the theta-exactness of $I_p$ for $p = 1,\infty$.

\begin{proposition}
    $I_1$ is theta-1 exact.
\end{proposition}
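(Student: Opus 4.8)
The plan is to show directly that $\thetabody{1}(I_1)$ equals the $\ell_1$-unit ball $B_1$, which we already know equals $\overline{\conv(\Vcal(I_1))}$ since $\Vcal(I_1)$ consists precisely of the signed standard basis vectors $\pm e_a$. Since $\thetabody{1}(I_1)\subset B_1$ always holds (theta bodies are outer relaxations of the closed convex hull), it suffices to prove the reverse inclusion $B_1 \subset \thetabody{1}(I_1)$, or equivalently — passing to duals via \eqref{definition: upper theta body} and the inclusion \eqref{inequality: thetabody-underline} — to prove $\Thetabody{1}(I_1) \subset B_1$. In view of Theorem \ref{Theorem: theta1-convex-quadrics}, $\Thetabody{1}(I_1)$ is cut out by the convex quadrics contained in $I_1$, so the whole argument reduces to exhibiting enough convex quadrics in $I_1$ to carve out $B_1$.

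First I would recall that $B_1 = \{x : \sum_{a}|x_a| \le 1\}$ is the intersection of the $2^N$ half-spaces $\{x : \sum_a \varepsilon_a x_a \le 1\}$ over sign patterns $\varepsilon \in \{\pm 1\}^{[\nbf]}$. The key step is to realize each defining inequality as (the locus where a convex quadric in $I_1$ is $\le 0$). From the generators of $I_1$ we have $x_a x_b \in I_1$ for $a\neq b$ and $\sum_a x_a^2 - 1 \in I_1$. Consider, for a fixed sign pattern $\varepsilon$, the polynomial
\[
    q_\varepsilon(x) = \Bigl(\sum_{a\in[\nbf]} \varepsilon_a x_a\Bigr)^2 - 1 = \sum_{a} x_a^2 + \sum_{a\neq b}\varepsilon_a\varepsilon_b x_a x_b - 1.
\]
Modulo $I_1$ this is congruent to $\bigl(\sum_a x_a^2 - 1\bigr) + \sum_{a\neq b}\varepsilon_a\varepsilon_b\, x_a x_b \equiv 0$, so in fact $q_\varepsilon \in I_1$; and $q_\varepsilon$ is a convex quadric since its quadratic form is the rank-one PSD matrix $\varepsilon\varepsilon^{T}$. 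Hence Theorem \ref{Theorem: theta1-convex-quadrics} gives $\Thetabody{1}(I_1)\subset \{x : q_\varepsilon(x)\le 0\} = \{x : |\sum_a \varepsilon_a x_a|\le 1\} \subset \{x : \sum_a \varepsilon_a x_a \le 1\}$. Intersecting over all $\varepsilon\in\{\pm 1\}^{[\nbf]}$ yields $\Thetabody{1}(I_1)\subset B_1$.

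Combining the two inclusions, $B_1 \subset \thetabody{1}(I_1)\subset \Thetabody{1}(I_1)\subset B_1$, so all are equal; since $I_1$ is real radical (hence $\Sigma_1(I_1)$ is closed and $\thetabody{1}(I_1) = \Thetabody{1}(I_1)$ by Theorem \ref{Theorem: theta body projective spectrahedra}), this is exactly the statement that $I_1$ is theta-1 exact. I do not expect a serious obstacle here; the only point requiring a little care is confirming that the quadrics $q_\varepsilon$ genuinely lie in $I_1$ (rather than merely agreeing with $0$ on the variety) and that finitely many of them — one per sign pattern — suffice, which is immediate because $B_1$ is a polytope with exactly those $2^N$ facets. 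One could equally phrase the $\supset$ direction directly: every vertex $\pm e_a \in \Vcal(I_1)$ trivially lies in $\thetabody{1}(I_1)$ and the latter is convex and closed, giving $\overline{\conv(\Vcal(I_1))} = B_1 \subset \thetabody{1}(I_1)$, which streamlines the write-up.
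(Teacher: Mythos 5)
Your proof is correct, but it takes a different route from the paper. The paper disposes of this in two lines: it cites \cite[Example 4.6]{Gouveia_Parrilo_Thomas_2010}, which states that the vanishing ideal of the cross polytope's vertex set is theta-1 exact, and then invokes the real radicalness of $I_1$ (established earlier in Section \ref{section: reducedness}) to identify $I_1$ with that vanishing ideal. You instead give a self-contained argument: you exhibit the explicit convex quadrics $q_\varepsilon(x) = (\sum_a \varepsilon_a x_a)^2 - 1$, verify by inspection of the generators that each lies in $I_1$ (not merely vanishes on $\Vcal(I_1)$), and conclude $\Thetabody{1}(I_1)\subset B_1$ from Theorem \ref{Theorem: theta1-convex-quadrics}; combined with $B_1=\overline{\conv(\Vcal(I_1))}\subset\thetabody{1}(I_1)\subset\Thetabody{1}(I_1)$ this closes the loop. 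Your version buys two things: it does not need real radicalness of $I_1$ at all (nor closedness of $\Sigma_1(I_1)$, since the chain of inclusions forces all three sets to coincide), and it actually proves the stronger statement that $I_1$ is \emph{upper}-theta-1-exact. In essence you are unpacking the content of the cited example rather than citing it, which is arguably preferable in a paper that otherwise works with these quadrics explicitly (cf. Section \ref{section: extreme points}). One slip to fix in the write-up: your opening sentence asserts ``$\thetabody{1}(I_1)\subset B_1$ always holds (theta bodies are outer relaxations)'' --- the direction that always holds is $B_1\subset\thetabody{1}(I_1)$, and $\thetabody{1}(I_1)\subset B_1$ is precisely the nontrivial inclusion you then go on to prove; your closing paragraph has the chain the right way around, so only that framing sentence needs correcting.
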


\begin{proof}
    $l_1$ unit ball is also known as the cross polytope.
    By \cite[Example 4.6]{Gouveia_Parrilo_Thomas_2010},
    its vanishing ideal is theta-1 exact.
    This proposition then follows from the fact $I_1$ is real radical.
\end{proof}

\begin{proposition}\label{Proposition: real radical I infty}
    Suppose that the tensor space is of size $\R^{n_1\times\cdots \times n_d}$, let $n=\max\set{n_i, i=1,...,d}$,
    then $I_\infty$ is theta-$n$ exact.
\end{proposition}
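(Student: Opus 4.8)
The plan is to exhibit, for each linear polynomial $\ell$ that is nonnegative on $\Vcal(I_\infty)$, an explicit representation $\ell \equiv \sum_i h_i^2 \bmod I_\infty$ with $\deg(h_i) \le n$, where $n = \max_i n_i$. Since $I_\infty$ is real radical, $\ell$ nonnegative on $\Vcal(I_\infty)$ together with such a representation gives $\thetabody{n}(I_\infty) = \overline{\conv(\Vcal(I_\infty))} = B_\infty$, which is what theta-$n$-exactness means. So the whole content is a degree bound on an SOS certificate over the finite variety $\Vcal(I_\infty) = \{\pm 1\}^{[\nbf]}$ cut out by the rank-1 relations.

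The key reduction is that $\Vcal(I_\infty)$ consists precisely of the $2^n$ sign patterns of rank-1 sign tensors, i.e.\ tensors of the form $\varepsilon^{(1)} \otimes \cdots \otimes \varepsilon^{(d)}$ with $\varepsilon^{(j)} \in \{\pm 1\}^{n_j}$; moreover such a tensor is determined by far fewer than $N$ bits --- roughly $\sum_j n_j$ of them, but because of the $\pm$ scaling ambiguity in a rank-1 decomposition it is really parametrized by a set of size at most $2^{n}$ after fixing a reference entry. The strategy: fix the base index $\mathbf{1} = (1,\dots,1)$ and use the relations $x_\mathbf{1} x_a = \pm x_{\text{(reduced)}}$ to express every coordinate in terms of the "first slice" coordinates $x_{1\cdots 1 k 1 \cdots 1}$ and the sign $x_\mathbf{1}$. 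Over the variety these first-slice coordinates, together with $x_\mathbf{1}$, take values in $\{\pm1\}$, and a rank-1 sign tensor is pinned down by them. One then writes the linear functional $\ell$ restricted to the variety as a function of these at most $\sum_j(n_j - 1) + 1$ Boolean variables, and invokes the standard fact (cf.\ \cite[Example 4.6]{Gouveia_Parrilo_Thomas_2010} or the combinatorial-moment-matrix literature) that on the Boolean cube $\{\pm1\}^m$ every nonnegative linear functional is SOS of degree $\lceil m/2\rceil$ modulo the ideal $\langle x_i^2 - 1\rangle$ --- but here we must be careful, because the Boolean variables are not independent inside $\R[X]/I_\infty$; the relations identify many monomials. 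The degree accounting must be done in the quotient ring, using the monomial basis coming from the reduced Gröbner basis $\Gcal_\infty$ of Theorem~\ref{theorem:groebner-basis}, and Proposition~\ref{pro-Ginfty-poly-div} tells us exactly which monomials survive: a product $x_{a^1}\cdots x_{a^k}$ reduces to a squarefree-in-each-slot monomial of degree $\le \sum_j (n_j - 1) + 1$, in fact a product over slots $j$ of a subset of $\{1,\dots,n_j\}$ with the "completion to $n_j$" convention, so the effective per-slot degree is capped at $n_j \le n$. This is the mechanism that yields $k = n$ rather than something growing with $N$.

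Concretely, the steps I would carry out: (1) parametrize $\Vcal(I_\infty)$ and show it is finite with the rank-1 sign structure; (2) using $\Gcal_\infty$ and Proposition~\ref{pro-Ginfty-poly-div}, identify the monomial basis of $\R[X]/I_\infty$ and observe that every element of degree $>n$ reduces to one of degree $\le n$ --- so $(\R[X]/I_\infty)_{\le n}$ already contains a "coordinate function" for every point of the variety, i.e.\ the evaluation map $(\R[X]/I_\infty)_{\le n} \to \R^{\Vcal(I_\infty)}$ is surjective; (3) invoke the general principle (Gouveia--Parrilo--Thomas, and \cite[Thm 7.32]{Gouveia_Thomas_2012} via Schmüdgen) that when the evaluation map from degree-$k$ polynomials onto functions on a finite real variety is surjective and the ideal is real radical, then $I$ is theta-$k$-exact; equivalently, interpolate the indicator of each vertex by a polynomial of degree $\le n$, write the nonnegative linear $\ell$ as a nonnegative combination of squares of these interpolants modulo $I_\infty$, and conclude. (4) Finally note real reducedness of $I_\infty$ was established in the previous subsection, so $\thetabody{n}$ and $\Thetabody{n}$ agree and the relaxation is exact.

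The main obstacle is step (2)–(3): making rigorous that degree $n$ (and not more) suffices for the interpolation / SOS certificate inside the quotient. The naive bound from "$\Vcal(I_\infty) \subset \{\pm1\}^N$ is a Boolean cube, hence theta-exact at level $\lceil N/2 \rceil$" is far too weak; one genuinely needs that the rank-1 relations collapse the relevant monomial basis to per-slot degree $\le n$, and that this collapse is compatible with squaring (so that a degree-$\lceil (\sum(n_j-1)+1)/2\rceil$ SOS in the "free" Boolean variables, when pulled back, still has representatives of degree $\le n$ in each of the original variables after Gröbner reduction). Verifying this compatibility --- essentially that $\Sigma_n(I_\infty) \cap V_1$ separates the vertices of $B_\infty$ --- is where the real work lies, and Proposition~\ref{pro-Ginfty-poly-div} is the crucial tool. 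I would also double-check the edge case where some $n_j = 1$ (a degenerate slot) to confirm the bound $k = n$ is still correct and not off by one.
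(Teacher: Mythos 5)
Your proposal follows essentially the same route as the paper: the whole content is the observation that the standard monomial basis of $\R[X]/I_\infty$ relative to $\Gcal_\infty$ has degree at most $n$, so that every sum of squares reduces, term by term, to an $n$-sos representative modulo $I_\infty$ and the hierarchy stabilizes at level $n$. Two small remarks: the ``compatibility with squaring'' you worry about is a non-issue, since one reduces each $h_i$ to a representative $\tilde h_i$ with $\deg \tilde h_i \le n$ and then $\sum_i h_i^2 \equiv \sum_i \tilde h_i^2 \bmod I_\infty$ is already $n$-sos; and the combinatorial step you defer (that a subset of $[\nbf]$ which is pairwise reduced with respect to $\bwedge,\bvee$ has cardinality at most $n$) is precisely what the paper's proof carries out directly, rather than extracting it from Proposition~\ref{pro-Ginfty-poly-div}.
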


\begin{proof}
    It suffices to show that any polynomial of degree greater than $n$
    is equivalent to a polynomial of lower degree modulo $I:=I_\infty$. 
    Indeed, if this is true,
    then every sum of squares is equivalent to an $n$-sos polynomial modulo $I$.
    
    Regard any polynomial as an element in the vector space $\R[X]/I$, 
    which is spanned by monomials not divisible by the initial monomials in $\Gcal := \Gcal_\infty$.
    A monomial is not divisible by initial monomials in $\Gcal$ if and only if
    it satisfies the following two conditions;
    first, it contains no powers, 
    which means it has the form $x_{a^{(1)}}x_{a^{(2)}}\dots x_{a^{(k)}}$
    where $a^{(i)}\not=a^{(j)}$ for $i\not = j$; 
    second,
    every pair $(a,b)$ of $J = \set{\tensorvec{a}{1},...,\tensorvec{a}{k}}$ 
    is reduced in the sense that
    $(a, b) = (a\bwedge b, a\bvee b)$ up to change of positions 
    (from Section \ref{section: moment matrix for theta-1}).
    Then the highest degree of such a monomial $x_{a^{(1)}}x_{a^{(2)}}\dots x_{a^{(k)}}$
    equals the largest cardinality of $J= \set{\tensorvec{a}{1},...,\tensorvec{a}{k}}$
    such that every pair is reduced. 
    
    We now show that this $J$ has at most cardinality $n = \max\set{n_i, i=1,...,d}$.
    By construction of the notation $\bwedge$ and $\bvee$,
    a pair of indices $(a,b)$ is reduced 
    if for any $i\in \set{1,...,d}$, 
    either $a_i< b_i$ or $a_i= b_i = n_i$.
    To achieve maximality, $\gamma := (n_1,..., n_d)\in J$. 
    Otherwise, by adding this index, one always extends the set $J$. 
    Then for any other $b\in J$,
    there exists $i\in\set{1,...,d}$ such that $b_i < n_i$.
    Assume $i=1$,
    consider $a\in J$ other than $\gamma, b$ such that $a_1<b_1<n_1$.
    Indeed, if $a_1 = b_1$, then they must both equal $n_1$.
    Continue in this way, the maximal cardinality is $n_1$. 
    If $n_1 = n$, then
    $J$ has maximal length $n$. 
    Then the basis of $\R[X]/I$ has maximal degree $n$,
    which means any polynomial equals a polynomial of degree at most $n$ modulo $I$. 
    The proof is complete.
\end{proof}

\subsection{Symmetries of Theta bodies}\label{section: symmetry}

We answer here that the symmetries of the variety continue to act invariantly on the (upper) theta bodies or their corresponding unit spheres,
provided that the ideal is indeed the vanishing ideal.
Although the real reducedness of $I_2$ is still unclear,
we can still prove the same statement with additional insights.

We first discuss generally about the group actions on varieties, ideals and the coordinate rings.

\begin{lemma}\label{action in Rx/I}
    Let a group $G\subset \GLgroup(n)$ act on $\Vcal(I)\subset \R^n$ and $I$ is real radical.
    Then $G$ also acts naturally on $\R[X]$ and $\R[X]/I$ in the following manner;
    for any $f\in\R[X]$
    \[
    g\cdot f(x) = f(g^{-1}x),\quad g\cdot (f+I) = (g\cdot f)+I
    \]
    In particular, $G$ is invariant in $I$.
\end{lemma}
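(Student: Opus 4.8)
The plan is to verify the three claims in order: that $G$ acts on $\R[X]$, that this descends to $\R[X]/I$, and that $I$ is $G$-invariant. The first part requires essentially no work: for $g \in \GLgroup(n)$ and $f \in \R[X]$, the assignment $(g\cdot f)(x) := f(g^{-1}x)$ is a well-defined polynomial since $g^{-1}$ acts linearly on coordinates, so substituting linear forms into $f$ yields a polynomial; associativity $(gh)\cdot f = g\cdot(h\cdot f)$ and the identity acting trivially both follow from the chain of substitutions $f((gh)^{-1}x) = f(h^{-1}g^{-1}x)$, giving a genuine left action by $\R$-algebra automorphisms (the algebra structure is preserved because substitution commutes with addition and multiplication).

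Next I would show $I$ is $G$-invariant, which is the crux and uses real radicality. Since $G$ acts on $\Vcal(I)$, for any $f \in I$ and $v \in \Vcal(I)$ we have $(g\cdot f)(v) = f(g^{-1}v)$, and $g^{-1}v \in \Vcal(I)$ because $G$ (hence $g^{-1}$) maps $\Vcal(I)$ to itself. Therefore $g\cdot f$ vanishes on all of $\Vcal(I)$. Now invoke the Real Nullstellensatz as recalled after Definition~\ref{Definition: radical}: the vanishing ideal of $\Vcal(I)$ equals $\Rradical{I}$, and by hypothesis $I$ is real radical, so $I = \Rradical{I}$ is exactly the ideal of polynomials vanishing on $\Vcal(I)$. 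Hence $g\cdot f \in I$, proving $g\cdot I \subseteq I$ for every $g$; applying this to $g^{-1}$ as well gives equality $g\cdot I = I$.

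Finally, with $I$ invariant, the action on $\R[X]/I$ is well-defined: if $f + I = f' + I$, i.e. $f - f' \in I$, then $g\cdot f - g\cdot f' = g\cdot(f-f') \in I$, so $(g\cdot f) + I = (g\cdot f') + I$, and the formula $g\cdot(f+I) := (g\cdot f)+I$ makes $\R[X]/I$ a $G$-module by $\R$-algebra automorphisms, inheriting the action properties from $\R[X]$.

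The main obstacle is conceptual rather than technical: one must be careful that $G$ acting on $\Vcal(I)$ does \emph{not} a priori act on $I$ for an arbitrary ideal with that variety (only on the radical-type closure), so the real radicality hypothesis is genuinely needed and is precisely what bridges the gap via the Real Nullstellensatz. Everything else is routine substitution bookkeeping, which I would not spell out in full.
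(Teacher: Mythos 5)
Your proof is correct and follows the same route as the paper: show that $g\cdot f$ vanishes on $\Vcal(I)$ because $g^{-1}$ preserves the variety, then use real radicality together with the Real Nullstellensatz to conclude $g\cdot f\in I$, which makes the quotient action well-defined. You simply spell out the routine verifications (that substitution gives an algebra automorphism, and that invariance of $I$ implies well-definedness on $\R[X]/I$) that the paper leaves implicit.
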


\begin{proof}
    Assume $f\in I$, then $g\cdot f$ vanishes on $\Vcal(I)$ as $g^{-1}$ acts on $\Vcal(I)$.
    Since $I$ is real radical, from the Real Nullstellensatz it follows that $g\cdot f\in I$. 
    This implies that the action $g\cdot (f+I)$ is well defined.
\end{proof}

Recall the definition of theta bodies given by equation (\ref{definition: thetabody}), 
\[\thetabody{k}(I):= \cl \pi_x((\Sigma_k(I)^*)^c).\]
We have a natural action of $G$ on $\thetabody{k}(I)$;
namely, the dual action. 
For $l\in V_{2k}^*, g\in G, f\in V_{2k} = (\R[X]/I)_{\leq 2k}$
\begin{equation}\label{action in V*}
    g\cdot l(f) := l(g^{-1}\cdot f) = l\circ f \circ g.
\end{equation}
This action is well-defined on $V_{2k}^*$ due to lemma \ref{action in Rx/I}. 
For any point $u\in \pi_x((\Sigma_k(I)^*)^c)$,
there exists $l_u\in\Sigma_k(I)^*\subset V_{2k}^*$ such that $l_u(1) = 1$,
$\pi_x(l_u) = u$. 
The action of $g$ on $u$ is defined as
\[
g\cdot u:= \pi_x(g\cdot l_u).
\]
This is well-defined as for any $l\in V_{2k}^*$ such that $l(1) = 1, \pi_x(l) = u$,
by regarding $x_i$ as the coordinate polynomial, we have
\[
\pi_{x_i}(g\cdot l) = g\cdot l (x_i) = l(g^{-1}\cdot x_i) = l\circ x_i(gx) = (gu)_i.
\]
Recall that $gu$ is the action of $g$ on $\R^n$.
For the closure operator, we can use the action on sequences.
Suppose $u = \lim u_t$, then $g\cdot u = \lim g\cdot u_t$.
It is well-defined since the linear map is continuous.
In particular, in our examples of nuclear $p$-norms, 
we can eliminate the closure operator
by the following lemma.
\begin{lemma}\label{lemma: theta body without cl}
    For $I = I_2, I_\infty$, $(\Sigma_k(I)^*)^c\subset V_{2k}^*$ is compact (in Euclidean topology).
    In particular, $\cl \pi_x((\Sigma_k(I)^*)^c) = \pi_x((\Sigma_k(I)^*)^c).$
\end{lemma}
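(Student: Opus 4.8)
**Proof proposal for Lemma (compactness of $(\Sigma_k(I)^*)^c$ for $I = I_2, I_\infty$).**

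The plan is to show that the set $(\Sigma_k(I)^*)^c \subset V_{2k}^*$ is closed and bounded in the finite-dimensional vector space $V_{2k}^* = ((\R[X]/I)_{\le 2k})^*$; Heine--Borel then gives compactness, and the statement about the closure operator follows immediately since the continuous linear projection $\pi_x$ of a compact set is compact, hence closed, so $\cl\pi_x((\Sigma_k(I)^*)^c) = \pi_x((\Sigma_k(I)^*)^c)$.

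For closedness: by equation (\ref{Remark: spectrahedra of theta bodies}) we have $\Sigma_k(I)^* = S_+ \cap V_{2k}^*$, the intersection of the (closed) cone of positive semidefinite truncated moment matrices with a linear subspace, which is closed; intersecting with the affine hyperplane $\{l : l(1) = 1\}$ implementing the $^c$ operator keeps it closed. So it remains to prove boundedness. Here I would use that the variety $\Vcal(I_p)$ is contained in the Euclidean unit sphere (for $p = \infty$, every coordinate of every point of $\Vcal(I_\infty)$ is $\pm 1$ on the support of a rank-one pattern — wait, more carefully: for $p = \infty$ the defining relations force $x_a^2 = 1$ for all $a$, so $\|x\|_2^2 = N$; for $p = 2$ the relation $\sum_a x_a^2 = 1$ gives $\|x\|_2 = 1$). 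In either case the coordinate functions $x_a$, and more generally all monomials, are uniformly bounded on $\Vcal(I)$. The key point is that a positive semidefinite truncated moment matrix $M(l)$ with $l(1) = 1$ coming from $\Sigma_k(I)^*$ satisfies $|l(x^\alpha x^\beta)| = |M(l)_{\alpha,\beta}| \le \sqrt{M(l)_{\alpha,\alpha}\, M(l)_{\beta,\beta}}$ by positive semidefiniteness, so it suffices to bound the diagonal entries $M(l)_{\alpha,\alpha} = l(x^{2\alpha})$ for $\alpha$ a monomial of degree $\le k$ in the chosen Gröbner basis. Since the reduced monomial basis of $\R[X]/I$ consists of square-free monomials (for $I_\infty$, after reduction by $x_a^2 - 1$) or monomials reducible via $\sum_a x_a^p = 1$ (for $I_2$), I would argue that $x^{2\alpha}$ reduces modulo $I$ to a bounded combination of basis monomials: concretely, $l(x^{2\alpha}) = l(r_\alpha)$ where $r_\alpha$ is the normal form, and one shows $l(r_\alpha)$ is controlled by $l(1) = 1$ together with lower-degree diagonal entries, giving a bound by induction on degree.

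The main obstacle is this last boundedness estimate: unlike the generic compact-variety case where one invokes that $\Vcal(I)$ is compact and cites a known bound, here we need the quantitative statement that $l \mapsto l(x^{2\alpha})$ is bounded on $\{l \in S_+ \cap V_{2k}^* : l(1) = 1\}$ purely from the algebra of $I$. For $I_\infty$ this is cleanest: the normal form of $x^{2\alpha}$ modulo $\Gcal_\infty$ is again (up to sign) a square-free reduced monomial $x^\gamma$ of degree $\le 2k$, and positive semidefiniteness plus $l(1)=1$ forces $|l(x^\gamma)| \le 1$ by writing $x^\gamma$ as a product and applying Cauchy--Schwarz repeatedly, or directly since $M(l) \succeq 0$ with $M(l)_{0,0}=1$ implies every diagonal entry equals some $l(x^{2\delta})$ which by the same reduction is $\pm l(x^{\gamma'})$, bootstrapping down to $l(1)$. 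For $I_2$ one uses instead that $\operatorname{tr}$ over the diagonal block indexed by degree-$1$ monomials equals $l(\sum_a x_a^2) = l(1) = 1$, so each $l(x_a^2) \le 1$, and then the rank-one relations $x_a x_b = x_{a\wedge b} x_{a\vee b}$ propagate this bound to higher degrees. I would write the argument uniformly by noting that in both cases there is a fixed constant $C = C(\nbf, k, p)$ with $\|M(l)\|_\infty \le C$ for all admissible $l$, completing the proof of boundedness and hence compactness.
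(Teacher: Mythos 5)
Your proposal is correct and follows essentially the same route as the paper: closedness from the dual/spectrahedral description, boundedness by controlling the diagonal of the p.s.d.\ truncated moment matrix via the relations $\sum_a x_a^2=1$ (trace bound) for $I_2$ and $x_a^2=1$ (unit diagonal) for $I_\infty$, then off-diagonal entries via $2\times2$ principal minors, and finally compactness of the continuous projection. If anything, your induction propagating the diagonal bound to higher-degree basis monomials is more explicit than the paper's argument, which only states the $k=1$ trace identity.
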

\begin{proof}
    It is well-known in convex geometry\cite{Tyrrel_Rockerfellar_1997} that the dual operator maps to a closed convex set.
    Then its section $(\Sigma_k(I)^*)^c$ is closed. It suffices now to show it is bounded.
    Indeed, regard $l\in V_{2k}^*$ as a symmetric matrix on $V_{k}$, say $Q_l$. 
    Taking the section means $l(1) = Q_l(1,1) = 1$. 
    In $I_2$, the relation $\sum_{a}x^2_a = 1$ indicates that
    the trace of $Q_l$ equals 2. 
    As $Q_l$ is p.s.d, its diagonal entries are non-negative,
    hence bounded. 
    For $I = I_\infty$,  the relation $x_a^2 = 1$ implies that 
    the diagonal entries of $Q_l$ are all $1$. 
    Then from the non-negativity of principal 2-by-2 minors, 
    it follows that every entry of $Q_l$ should be bounded. 
    This enables us to remove the closure operator 
    since the continuity of projection preserves compactness. 
\end{proof}
We note here that this action in fact
coincides with the action in $\R^n$.
To summarize, we present the following lemma.
\begin{lemma}
    The action of $G$ in $\pi_x((V_{2k}^*)^c)$ defined above coincides with 
    the action in $\R^n$. In particular, it is invariant in $\thetabody{k}(I)$.
\end{lemma}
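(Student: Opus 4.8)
The plan is to verify the displayed formula by evaluating the dual action on the coordinate linear forms, and then to deduce $G$-invariance of $\thetabody{k}(I)$ from the fact that $G$ preserves every ingredient in the construction $\thetabody{k}(I)=\cl\,\pi_x((\Sigma_k(I)^*)^c)$ of (\ref{definition: thetabody}). First I would check that $G$ genuinely acts on $V_{2k}$ and fixes constants: each $g\in G\subset\GLgroup(n)$ acts on $\R[X]$ by a homogeneous linear change of variables, hence preserves degrees, and since $I$ is $G$-invariant by lemma~\ref{action in Rx/I}, the induced map on $\R[X]/I$ respects the minimal-degree filtration --- if $f\equiv h\mod I$ with $\deg h\le 2k$, then $g\cdot f\equiv g\cdot h\mod I$ with $\deg(g\cdot h)=\deg h\le 2k$. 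Applying the same to $g^{-1}$ shows $g$ is a linear automorphism of $V_k$ and of $V_{2k}$, so the dual action (\ref{action in V*}) is well-defined on $V_{2k}^*$; moreover $g\cdot 1=1$, so the normalization $(\cdot)^c$ commutes with the action.

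The core identity is then a one-line computation. Let $l\in V_{2k}^*$ and $u=\pi_x(l)\in\R^n$, so that $u_i=l(x_i)$. Writing $g=(g_{ij})$, the polynomial $g^{-1}\cdot x_i$ equals $x\mapsto x_i(gx)=\sum_j g_{ij}x_j\in V_1\subseteq V_{2k}$, hence
\[
 (g\cdot l)(x_i)=l(g^{-1}\cdot x_i)=\sum_j g_{ij}\,l(x_j)=\sum_j g_{ij}\,u_j=(gu)_i .
\]
So $\pi_x(g\cdot l)=gu$; that is, under $\pi_x$ the dual action on $V_{2k}^*$ is intertwined with the ordinary linear action of $G$ on $\R^n$. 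This is the first assertion (taking the whole space $V_{2k}^*$ it gives the statement exactly as phrased, and taking $\Sigma_k(I)^*$ it is what is needed for the theta body).

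For invariance of the theta body I would note that $G$ preserves $\Sigma_k(I)$: if $f\equiv\sum_i h_i^2\mod I$ with $\deg h_i\le k$, then $g\cdot f\equiv\sum_i(g\cdot h_i)^2\mod I$ and each $g\cdot h_i$ again has degree $\le k$. Since the dual cone of a $G$-invariant set is $G$-invariant, $\Sigma_k(I)^*$ is $G$-invariant, and so is its section $(\Sigma_k(I)^*)^c$ because $g$ fixes $1$. By the intertwining above, $\pi_x((\Sigma_k(I)^*)^c)$ is invariant under the linear action of $G$ on $\R^n$, and since that action is a homeomorphism it commutes with closure; therefore $\thetabody{k}(I)$ is $G$-invariant (for $I=I_2,I_\infty$ the closure is redundant anyway by lemma~\ref{lemma: theta body without cl}).

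The ``main obstacle'' is really just the first step: one must make sure the linear $G$-action descends to the \emph{truncated} quotient $V_{2k}$, i.e. respects the minimal-degree filtration modulo $I$, and that it fixes constants, since these are precisely the properties that make the dual action and the normalization $(\cdot)^c$ well-defined and hence let $G$ act on $\thetabody{k}(I)$ at all. Once these are in place, what remains is the displayed one-line identity together with the $G$-equivariance of duality, sectioning, projection, and closure.
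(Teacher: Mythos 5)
Your proposal is correct and follows essentially the same route as the paper: the identity $\pi_x(g\cdot l)=gu$ is exactly the computation $\pi_{x_i}(g\cdot l)=l(g^{-1}\cdot x_i)=(gu)_i$ carried out in the discussion preceding the lemma, and the invariance of $\thetabody{k}(I)$ is reduced, as in the paper, to the observation that $\Sigma_k(I)$ is $G$-stable (sums of squares map to sums of squares of the same degree), hence so is its dual cone, its section, and the closed projection. You simply spell out in more detail the well-definedness of the action on the truncated quotient and the compatibility with the closure operator, which the paper leaves implicit.
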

\begin{proof}
    The discussion above leaves only invariant action on $\thetabody{k}(I)$,
    which is equivalent to showing $\Sigma_k(I)^*$ is invariant. 
    It is clear since every $f\in \Sigma_k(I)$, $g\cdot f\in\Sigma_k(I)$.
\end{proof}

A similar action can also be interpreted on the upper theta bodies 
$\Thetabody{k}(I) =((\Sigma_k(I)\cap V_1)^*)^c$,
which we omit here.

Now, we can explain that if $\conv(\Vcal(I))\subset \R^n$ defines 
a norm---meaning that it is full dimensional and symmetric---then so are the theta bodies.
Indeed, since the variety is symmetric, $g = -\Id\in \GLgroup(n)$ acts on the variety, where $\Id$
is the identity map.
Given that $I$ is invariant under $g$, then $g$ also acts on the theta bodies.
It follows that the theta bodies define norms. 
Moreover, the norm is also invariant under this group action.

\subsubsection*{Symmetries of theta bodies of nuclear $p$-norms}

Here, we consider the symmetries of theta bodies for $I = I_2, I_\infty$.
We have already established in proposition \ref{Proposition: real radical I infty} that
$I_\infty$ is real radical. 
Therefore based on previous discussions, 
the symmetries of $\conv(\Vcal(I_\infty))$ are also symmetries of its theta bodies.
Recall that
\[
B_p = \conv{\{x^{(1)}\otimes...\otimes x^{(d)}\in\Rbb^{n_1\times...\times n_d}: \norm{x^{(i)}}_p = 1\}}
\]
Let $C_n$ denote the cubes in $\R^n$.
Then for the tensor space $\R^{n_1\odots{\times} n_d}$, 
$B_\infty$ is the convex hull of $C_{n_1}\otimes...\otimes C_{n_d}$.
Denote $H_n$ the symmetry group of the hypercube $C_n$,
which consists of changes of positions or signs.
Then $G_\infty := H_{n_1}\times\cdots\times H_{n_d}$ acts naturally on $B_\infty$ via 
for $x = \sum_{i=1}^r x^{(i_1)}\otimes\cdots\otimes x^{(i_d)}, x^{(i_j)}\in \R^{n_j}, h_i\in H_{n_i}$,
\begin{equation}
    h_1\otimes\cdots\otimes h_d\cdot x = \sum_{i=1}^r h_1\cdot\tensorvec{x}{i_1}\otimes\cdots\otimes h_d\cdot x^{(i_d)}.
\end{equation}

For $I = I_2$, we first discuss the symmetries of $B_2$.
In fact, $B_2$ is the convex hull of $S_{n_1}\otimes\cdots\otimes S_{n_d}$,
where $S_n$ is the Euclidean unit sphere of $\R^n$.
Similarly, $S_n$ has the orthogonal group $\Ogroup(n)$ as its symmetry group.
The product of the groups acts on $B_2$ in the same manner.
Although we do not have a result about real reducedness, 
by lemma \ref{action in Rx/I}, 
it suffices to show $G_2 := \Ogroup(n_1)\otimes\cdots\otimes \Ogroup(n_d)$ is invariant in $I$.
First, recall that $I_0$ is the ideal generated by rank-1 binomials in $I_2$.

\begin{lemma}
     $I_0$ is real radical in $\R[X]$.
\end{lemma}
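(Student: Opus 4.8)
The plan is to realize $\R[X]/I_0$ as a subring of a polynomial ring over $\R$, which forces it to be a real domain, and then read off real radicality directly from the definition. Introduce the Segre monomial map
\[
    \phi\colon \R[X] \longrightarrow \R[y^{(1)}_1,\dots,y^{(1)}_{n_1},\dots,y^{(d)}_1,\dots,y^{(d)}_{n_d}],
    \qquad x_a \longmapsto \prod_{k=1}^d y^{(k)}_{a_k}.
\]
A one-line check gives $\phi(x_a x_b - x_{a\wedge b} x_{a\vee b}) = 0$, since $\{a_k, b_k\} = \{(a\wedge b)_k, (a\vee b)_k\}$ as multisets for every $k$; hence $I_0 \subseteq \ker\phi$ (this is of course just the statement that $\Vcal(I_0)$ is the image of the Segre parametrization, cf. Lemma \ref{lem-rk-1-eq}), and $\phi$ factors through a ring homomorphism $\psi\colon \R[X]/I_0 \to \R[y]$.

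The key step is to show $\psi$ is injective. By Corollary \ref{cor-I0-Gröbner-basis} together with Proposition \ref{pro-rk-1-poly-div}, the residue classes of the \emph{sorted} monomials — those $x_{b^1}\cdots x_{b^k}$ for which each coordinate sequence $(b^1_i,\dots,b^k_i)$ is non-decreasing — form an $\R$-basis of $\R[X]/I_0$. Under $\psi$, such a monomial maps to the monomial of $\R[y]$ whose exponent of $y^{(i)}_m$ is $\#\{\,j : b^j_i = m\,\}$; that is, $\psi$ records for each direction $i$ the multiset $\{b^1_i,\dots,b^k_i\}$. Since a sorted monomial is recovered uniquely from this $d$-tuple of multisets (just sort each coordinate ascendingly), distinct basis elements are sent to distinct, hence $\R$-linearly independent, monomials, so $\psi$ is injective. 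Therefore $\R[X]/I_0$ embeds into the integral domain $\R[y]$, so $I_0$ is prime, and its field of fractions embeds into $\R(y)$, which is formally real (being a rational function field over $\R$: evaluating a putative identity $-1 = \sum(p_i/q)^2$ at a real point avoiding the zeros of $q$ gives a contradiction).

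To finish, suppose $f^{2n} + \sum_i g_i^2 \in I_0$. Passing to the domain $\R[X]/I_0$ gives $\bar f^{2n} + \sum_i \bar g_i^2 = 0$; were $\bar f \neq 0$, dividing by $\bar f^{2n}$ inside $\R(y)$ would write $-1$ as a sum of squares, contradicting formal reality. Hence $\bar f = 0$, i.e. $f \in I_0$, so $\Rradical{I_0} = I_0$. I expect the only genuinely delicate point to be the bookkeeping in the second paragraph — the identification of the Gröbner normal-form basis with $d$-tuples of equal-cardinality multisets and the resulting injectivity of $\psi$; everything else is routine once the Gröbner basis $\Gcal_0$ and the reduction rule of Proposition \ref{pro-rk-1-poly-div} are available.
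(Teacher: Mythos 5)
Your proof is correct, but it takes a genuinely different route from the paper. The paper argues directly with leading terms: since every leading monomial of $\Gcal_0$ is a squarefree quadratic $x_a x_b$ with $a\neq b$, such a monomial divides $\LT(r^2)=\LT(r)^2$ only if it already divides $\LT(r)$; applying this to the normal forms $r_i$ of the $f_i$ in a relation $\sum_i f_i^2\in I_0$ forces the largest $r_i$, and hence all of them, to vanish, and then the characterization of real ideals in \cite[Proposition 12.5.1]{Marshall_2008} finishes the job. You instead embed $\R[X]/I_0$ into $\R[y]$ via the Segre monomial map, using Corollary \ref{cor-I0-Gröbner-basis} and Proposition \ref{pro-rk-1-poly-div} only to identify the standard monomials and check injectivity of the induced map $\psi$ (for which spanning by the standard monomials already suffices, since their images are distinct monomials of $\R[y]$), and then conclude from primality and formal reality of the fraction field. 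Your argument proves strictly more — it shows $I_0=\ker\phi$ is prime, i.e.\ the full ideal of the Segre cone — at the cost of the multiset bookkeeping in your second paragraph; the paper's argument is shorter and stays entirely inside the leading-term combinatorics already set up, but yields only the real-radical property. Both hinge on the same Gröbner basis $\Gcal_0$, and both are sound.
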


\begin{proof}
    We know that $\Gcal_0$ is the set of binomials in $\Gcal_2$ and $\Gcal_0$ is a Gröbner basis of $I_0$ by corollary \ref{cor-I0-Gröbner-basis}.
    Consider $f^2\in I_0$, we want to show $f\in I_0$. 
    Suppose $f\not\in I_0$, 
    $f\rightarrow_{\Gcal_0} r$
    where $r$ is not divisible by any $x_a x_b \in\LT(\Gcal_0)$.
    In particular, $x_ax_b \not| \LT(r)$.
    However, $f^2\in I$ implies that $r^2\in I$.
    In other words, there exists $x_ax_b\in \LT(\Gcal_0)$ 
    that divides $\LT(r^2)$. 
    Since $a\not = b$, $x_ax_b|\LT(r)$,
    which is a contradiction.
    
    Now consider any sum of squares $\sum_{i=1}^l f_i^2\in I$.
    We want to show any $f_i\in I$.
    Let $r_i$ be the remainder of $f_i$ through the division algorithm by $\Gcal$.
    Equivalently, $\sum_i r_i^2\in I$. 
    W.L.O.G, we may assume $\LT(r_1)>\LT(r_2)>...>\LT(r_l)$ in the monomial order.
    Similarly, there exists a monomial $x_ax_b\in \LT(\Gcal_0)$ 
    such that $x_a x_b|\LT(\sum_i r_i^2) = \LT(r_1^2)$; hence $x_ax_b|\LT(r_1)$.
    If $r_1\not= 0$, it is a contradiction.
    Then $r_1 = 0$ and therefore $r_i = 0$, which means $f_i\in I$.
    The conclusion $I_0$ is real radical follows 
    from \cite[Proposition 12.5.1]{Marshall_2008}.
\end{proof}

\begin{proposition}
    The ideal $I_2$ is invariant under $G_2 = \Ogroup(n_1)\otimes\cdots\otimes \Ogroup(n_d)$.
\end{proposition}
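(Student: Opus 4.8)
The plan is to show that $I_2$ is stable under the action of $G_2 = \Ogroup(n_1)\otimes\cdots\otimes\Ogroup(n_d)$ by decomposing the generators of $I_2$ into two pieces: the rank-1 binomials generating $I_0$, and the single polynomial $\sum_{a\in[\nbf]}x_a^p - 1$ (here $p=2$). Since $I_2 = I_0 + \gen{\sum_a x_a^2 - 1}$, it suffices to prove that $G_2$ maps each of these pieces into $I_2$. For the second piece, $\sum_a x_a^2$ is (up to the constant $1$) the squared Euclidean norm on $\R^N = \R^{n_1}\otimes\cdots\otimes\R^{n_d}$, and an element $g_1\otimes\cdots\otimes g_d$ with each $g_i\in\Ogroup(n_i)$ acts as an orthogonal transformation of $\R^N$; hence $g\cdot(\sum_a x_a^2 - 1) = \sum_a x_a^2 - 1$ exactly, so this generator is fixed.

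The substantive part is the invariance of $I_0$. First I would invoke the previous lemma, which establishes that $I_0$ is real radical. By the Real Nullstellensatz, $I_0$ is then the vanishing ideal of its real variety $\Vcal_\R(I_0)$, which by Lemma \ref{lem-rk-1-eq} is exactly the set of rank-1 tensors in $\R^\nbf$. Now the key geometric observation is that the set of rank-1 tensors is preserved by $G_2$: if $x = x^{(1)}\otimes\cdots\otimes x^{(d)}$ and $g = g_1\otimes\cdots\otimes g_d$ with $g_i\in\Ogroup(n_i)$, then $g\cdot x = (g_1 x^{(1)})\otimes\cdots\otimes(g_d x^{(d)})$ is again rank 1 (orthogonality is not even needed here — invertibility of each $g_i$ suffices). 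Therefore $\Vcal_\R(I_0)$ is $G_2$-invariant, and by Lemma \ref{action in Rx/I} (applied with the real radical ideal $I_0$), $G_2$ acts on $\R[X]$ leaving $I_0$ invariant.

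Combining the two pieces: for any $g\in G_2$ and any generator $h$ of $I_2$, if $h\in I_0$ then $g\cdot h\in I_0\subset I_2$, and if $h = \sum_a x_a^2 - 1$ then $g\cdot h = h\in I_2$. Since $g$ acts linearly on $\R[X]$, it sends the ideal generated by these generators into itself, so $g\cdot I_2\subseteq I_2$; applying the same to $g^{-1}$ gives equality, and $I_2$ is $G_2$-invariant. The main obstacle I anticipate is purely bookkeeping: making sure the action of $g_1\otimes\cdots\otimes g_d$ on the polynomial ring $\R[X]$ — which, as the paper emphasizes, only sees the linear structure of $\R^N$ — genuinely corresponds to the tensor-product action on $\R^{n_1}\otimes\cdots\otimes\R^{n_d}$ under the fixed identification $X\cong\R^N$, and in particular that the quadratic form $\sum_a x_a^2$ is the one fixed by this linear action. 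Once that identification is made explicit, both steps are short.
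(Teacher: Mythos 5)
Your proposal is correct and follows essentially the same route as the paper: decompose $I_2 = I_0 + \gen{\sum_a x_a^2 - 1}$, note that the orthogonal action fixes $\sum_a x_a^2 - 1$ exactly, and use the real radicality of $I_0$ together with the fact that $G_2$ preserves the set of rank-1 tensors to conclude (via the Real Nullstellensatz) that $I_0$ is invariant. The only cosmetic difference is that the paper applies this decomposition to an arbitrary element $f = c f_2 + h$ of $I_2$ rather than to the generators, but the two ingredients and their justifications are identical.
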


\begin{proof}
    Define $f_2(x):= \sum_{a} x_a^2 - 1$.
    We know that $I_2 = I_0 + \gen{f_2}$.
    For any $f\in I_2$, we have the following decomposition
    \[
        f(x) = c(x)f_2(x) + h(x)
    \]
    where $c(x)\in\R[X], h(x)\in I$. Then for any $g\in G_2$,
    \[
        g\cdot f(x) = c(g^{-1}x)f_2(g^{-1}x) + h(g^{-1}x).    
    \]
    Note that $g$ preserves the $l_2$-norm, therefore $f_2(g^{-1}x) = f_2(x)$.
    As $I$ is real radical and $g$ preserves the rank of $x$, $g\cdot h$ will vanish on $\Vcal(I)$,
    hence $g\cdot h\in I$. Now it follows $g\cdot f\in I_2$.
\end{proof}

\begin{remark}
    This proposition also holds for all positive even numbers.
\end{remark}

Finally, we conclude that $G_p$ acts on the theta bodies of $B_p$,
for $p\geq 2$ even or $p=\infty$.
Moreover, upon verifying that $\gen{\sum_a x_a^p-1}$ is real radical,
we may propose the following conjecture.
\begin{conjecture}
    The ideal $I_p$ is real radical for all even numbers $p\geq 2$.
\end{conjecture}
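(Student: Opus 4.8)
The plan is to deduce the conjecture, via the Real Nullstellensatz, from two facts about the complexification: that $I_p\C[X]$ is a \emph{radical} ideal of $\C[X]$, and that $\Vcal(I_p)$ is Zariski dense in $\Vcal_\C(I_p)$. Granting both: if $f\in\R[X]$ vanishes on $\Vcal(I_p)$ then, by density, $f$ vanishes on $\Vcal_\C(I_p)$, so by Hilbert's Nullstellensatz $f\in\sqrt{I_p\C[X]}=I_p\C[X]$; since $\C[X]$ is free over $\R[X]$ on $\{1,i\}$ this forces $f\in I_p\C[X]\cap\R[X]=I_p$, and the Real Nullstellensatz then gives $\Rradical{I_p}=I_p$. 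Throughout put $g:=\sum_{a\in[\nbf]}x_a^p$, so that $I_p=I_0+\gen{g-1}$. One may assume $n_i\geq 2$ for all $i$: a factor with $n_i=1$ can be collapsed by an isomorphism respecting both the $\ell_p$-norm and the rank-$1$ condition, whereas the fully degenerate case $N=\prod n_i=1$ must be excluded, since there $I_p=\gen{x^p-1}$ satisfies $\Rradical{I_p}=\gen{x^2-1}\neq I_p$ for $p\geq 4$ --- so the conjecture should be read for $N\geq 2$. Finally, note that $I_0\C[X]$ is already radical: $I_0$ is real radical, hence radical, in $\R[X]$, and $\sqrt{I_0\C[X]}$ is stable under conjugation of coefficients, whence $\sqrt{I_0\C[X]}=(\sqrt{I_0\C[X]}\cap\R[X])\C[X]=I_0\C[X]$; as $\Vcal_\C(I_0)$ is the irreducible Segre cone of rank-$\le1$ tensors (Lemma \ref{lem-rk-1-eq} works over $\C$), the ring $B:=\C[X]/I_0\C[X]$ is the coordinate ring of the reduced Segre cone, which is smooth away from its vertex.

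Radicality of $I_p\C[X]$ I would obtain from a transversality (Euler-scaling) argument. Let $x\in\Vcal_\C(I_p)$. Then $x\in\Vcal_\C(I_0)$ and $x\neq 0$ (since $g(x)=1$), so $x$ lies in the smooth locus of the Segre cone; as the cone is scaling-invariant, the vector $x$ itself is tangent to it at $x$, and
\[
    dg_x(x)\;=\;\sum_{a\in[\nbf]}p\,x_a^{p-1}x_a\;=\;p\,g(x)\;=\;p\;\neq\;0,
\]
so $g-1$ does not lie in the square of the maximal ideal of $B_{\mathfrak m_x}$, i.e.\ $g-1$ is part of a regular system of parameters there. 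Hence $(\C[X]/I_p\C[X])_{\mathfrak m_x}=B_{\mathfrak m_x}/(g-1)$ is a regular local ring for every $x\in\Vcal_\C(I_p)$; so $\C[X]/I_p\C[X]$ is regular, hence reduced, i.e.\ $I_p\C[X]$ is radical. In particular $\Vcal_\C(I_p)$ is a smooth complex variety, of dimension $m:=\dim\Vcal_\C(I_0)-1$.

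For the density I would parametrize. Let $\psi_\C\colon\C^{n_1}\times\cdots\times\C^{n_d}\to\C^N$, $\psi_\C(\tensorvec y1,\dots,\tensorvec yd)=\tensorvec y1\otimes\cdots\otimes\tensorvec yd$, and set $q_i(\tensorvec yi):=\sum_{j=1}^{n_i}(\tensorvec yi_j)^p$. The multiplicativity $\sum_a(\psi_\C(y))_a^p=\prod_i q_i(\tensorvec yi)$ gives $\Vcal_\C(I_p)=\psi_\C(W)$ with $W:=\{\,y:\prod_i q_i(\tensorvec yi)=1\,\}$, so it suffices to show $W$ is irreducible. Consider the morphism $W\to\{c\in\C^d:\prod_i c_i=1\}\cong(\C^*)^{d-1}$, $(\tensorvec yi)_i\mapsto(q_i(\tensorvec yi))_i$. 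Each $q_i\colon\C^{n_i}\to\C$ has all fibres of dimension $n_i-1$, hence is flat (miracle flatness), hence open; therefore the displayed morphism --- a base change of the product $q_1\times\cdots\times q_d$ --- is flat, open and surjective, with irreducible base and with fibres $\prod_i\{q_i=c_i\}$, $c_i\neq 0$. Each $\{q_i=c_i\}$ is irreducible: its projective closure $\{\sum_j(\tensorvec yi_j)^p=c_i z^p\}\subset\mathbb{P}^{n_i}$ is a smooth hypersurface (the partials vanish simultaneously only at the origin), hence connected because $n_i\geq 2$, hence irreducible, and $\{q_i=c_i\}$ is a nonempty dense open subset of it. A surjective open continuous map with irreducible base and irreducible fibres has irreducible source, so $W$, and hence $\Vcal_\C(I_p)=\psi_\C(W)$, is irreducible. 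Finally $e_1\otimes\cdots\otimes e_1\in\Vcal(I_p)$ is a real point lying in the smooth locus of $\Vcal_\C(I_p)$; since the defining equations of $I_p$ are real with complex Jacobian of rank $N-m$ there, the real implicit function theorem makes $\Vcal(I_p)$ a real $m$-manifold near this point, so its Zariski closure is an $m$-dimensional subvariety of the irreducible $m$-dimensional $\Vcal_\C(I_p)$, i.e.\ equals it. This establishes the density, and together with the previous paragraph, the conjecture.

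The main obstacle is the density step, and within it the irreducibility of $W$ --- equivalently, of the hypersurface section $\{g=1\}$ of the Segre cone. The fibration argument above is the delicate part, and it degenerates precisely in the excluded regime (a factor $\{q_i=c_i\}$ with $n_i=1$ splits into $p$ points), which is exactly why one must first reduce to $n_i\geq 2$ and exclude $N=1$. A more conceptual proof of irreducibility, or a direct argument that every irreducible component of $\Vcal_\C(I_p)$ carries a real point, would streamline this. The remaining ingredients --- radicality of $I_0\C[X]$, the identity $I_p\C[X]\cap\R[X]=I_p$, and flatness of $q_i$ --- are routine, handled by the conjugation-invariance remark and miracle flatness.
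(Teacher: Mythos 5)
The paper does not prove this statement: it appears only as a conjecture, motivated by the preceding remark that one "may propose" it upon verifying real radicality of $\gen{\sum_a x_a^p-1}$. So there is no proof in the paper to compare against; what you have written is an attempt to actually resolve the conjecture, and as far as I can verify it succeeds for $N=n_1\cdots n_d\geq 2$. The skeleton is the standard one: real radicality follows from radicality of $I_p\C[X]$ together with Zariski density of $\Vcal_\R(I_p)$ in $\Vcal_\C(I_p)$, via the two Nullstellens\"atze and $I_p\C[X]\cap\R[X]=I_p$. Your Euler-field transversality argument is correct --- every point of $\Vcal_\C(I_p)$ is nonzero, hence a smooth point of the Segre cone, and $d(g-1)$ pairs to $p\neq 0$ with the tangent vector $x$ --- and it gives not just radicality but smoothness of $\Vcal_\C(I_p)$, which is exactly what reduces the density step to irreducibility plus the existence of a single real point. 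The fibration proof of irreducibility checks out under the standing reduction to $n_i\geq 2$: each fibre $\{q_i=c_i\}$ with $c_i\neq 0$ is a dense open subset of a smooth, hence connected, hence irreducible projective hypersurface, and the open-surjective-irreducible-fibres criterion applies. Your observation that the literal statement fails in the degenerate case $N=1$, $p\geq 4$, where $I_p=\gen{x^p-1}\subsetneq\gen{x^2-1}=\Rradical{I_p}$, is correct and worth recording explicitly; the conjecture must be read with $N\geq 2$.

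Two spots deserve more care in a full write-up, though neither is a gap. First, the final step is better handled by citing the standard fact from real algebraic geometry that an irreducible variety defined over $\R$ with a smooth real point has Zariski-dense real locus, rather than re-deriving it: your implicit-function-theorem sketch tacitly uses that $\Vcal_\C(I_p)$ locally coincides with the zero set of the $N-m$ selected generators, which needs the (easy but nontrivial) comparison of the variety germ with the manifold germ. Second, the collapsing of factors with $n_i=1$ should be checked to identify the ideals $I_0$ on the nose; it does, since the $i$-th coordinate of every multi-index is then forced to equal $1$, so the binomial generators are literally preserved under the identification.
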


\subsection{Extreme points of theta bodies}\label{section: extreme points}

Recall that
we say $x\in B$ is an extreme point of the closed convex set $B$,
if for any $y,z\in B$ such that $x = \lambda y+ (1-\lambda) z = x$ for some $\lambda \in(0,1)$
then $x = y = z$. 
The extreme points of $B_p = \conv(\Vcal(I_p))$ are exactly the variety points.
We will show that the variety $\Vcal(I_p)$ for $p=2,\infty$ remain extreme points of (upper) theta bodies. 
By the inclusion $\Thetabody{k}(I)\supset \thetabody{k}(I)\supset \cl\conv{\Vcal(I)}$,
it suffices to work only with upper theta bodies.
Recall that we can use convex quadrics to characterize $\Thetabody{1}(I)$(theorem \ref{Theorem: theta1-convex-quadrics}),
\begin{equation}\label{equation: convex-quadrics}
    \Thetabody{1}(I) = \cap_{f\in I\text{, convex quadrics}}\{p\in\R^n: f(p)\leq 0\}. 
\end{equation}
The convex quadrics has the form $f(x) = x^TAx + Bx + c$ where $A$ is p.s.d.

\begin{theorem}
    If the set of convex quadrics in $I$, 
    say $\Fcal = \set{f(x) = x^TA_f x + B_fx + C_f\in I, A_f \text{ p.s.d }}$ so that
    \[
    \bigcap_{f\in \Fcal}\ker(A_f) = \set{0},    
    \]
    then the extreme points of $\cl\conv(\Vcal(I))$ are also extreme points for $\Thetabody{k}(I)$ and $\thetabody{k}(I)$ for any $k\geq 1$.
\end{theorem}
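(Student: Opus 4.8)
The plan is to reduce everything to the case $k=1$ and then use the convex-quadric description of $\Thetabody{1}(I)$ together with the hypothesis on the kernels. Since $\overline{\conv(\Vcal(I))} \subset \thetabody{k}(I) \subset \Thetabody{k}(I)$ for all $k$, and since $\Thetabody{k+1}(I) \subset \Thetabody{k}(I)$, it suffices to show that every extreme point of $\overline{\conv(\Vcal(I))}$ is an extreme point of the \emph{largest} body in this chain, namely $\Thetabody{1}(I)$: if $x$ is extreme in a larger convex set and lies in a smaller convex set containing it, it is extreme in the smaller one as well. So fix an extreme point $x_0$ of $B := \overline{\conv(\Vcal(I))}$; we must show $x_0$ is extreme in $\Thetabody{1}(I)$.

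First I would record the elementary fact that $x_0 \in \Vcal(I)$, i.e. extreme points of $B$ are genuine variety points (this is stated in the excerpt for $B_p$, and follows in general because $B = \overline{\conv(\Vcal(I))}$ and the extreme points of the closed convex hull of a set $S$ lie in $\overline{S}$; here $\Vcal(I)$ is closed, and in the cases of interest compact). Consequently $f(x_0) = 0$ for every $f \in \Fcal$. Now suppose $x_0 = \tfrac12(y+z)$ with $y,z \in \Thetabody{1}(I)$; I want to force $y=z=x_0$. For each convex quadric $f(x) = x^TA_f x + B_f x + C_f \in \Fcal$ we have $f(y) \le 0$, $f(z) \le 0$ by \eqref{equation: convex-quadrics}, while $f(x_0) = 0$. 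Since $A_f \succeq 0$, the function $f$ is convex, so $0 = f(x_0) = f\!\left(\tfrac{y+z}{2}\right) \le \tfrac12 f(y) + \tfrac12 f(z) \le 0$; hence equality holds throughout, $f(y) = f(z) = 0$, and the midpoint convexity is tight. The tightness of convexity of the quadratic form $x \mapsto x^T A_f x$ along the segment $[y,z]$ forces $A_f(y-z) = 0$, i.e. $y - z \in \ker(A_f)$. (Concretely: $f(y)+f(z) - 2f\!\left(\tfrac{y+z}{2}\right) = \tfrac12 (y-z)^T A_f (y-z)$, which must vanish, and $A_f \succeq 0$ gives $A_f(y-z)=0$.)

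Running this over all $f \in \Fcal$ yields $y - z \in \bigcap_{f\in\Fcal} \ker(A_f) = \{0\}$ by hypothesis, so $y = z$, and since $x_0 = \tfrac12(y+z)$ we get $y = z = x_0$. This proves $x_0$ is extreme in $\Thetabody{1}(I)$, and by the containment chain above it is extreme in $\Thetabody{k}(I)$ and $\thetabody{k}(I)$ for every $k \ge 1$. I would also remark that the same argument shows the segment $[y,z]$ collapses even if one only assumes $x_0 = \lambda y + (1-\lambda) z$ for some $\lambda \in (0,1)$ rather than $\lambda = \tfrac12$, since the second-difference identity for a quadratic along a segment is still controlled by $(y-z)^T A_f (y-z)$ up to the positive factor $\lambda(1-\lambda)$.

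The main obstacle is the bookkeeping in the first step — ensuring that extreme points of $B$ really are points of $\Vcal(I)$ where \emph{all} convex quadrics in $I$ vanish, and not merely points where the inequalities $f \le 0$ are satisfied. This is where one uses that $x_0 \in \Vcal(I)$ (so $f(x_0) = 0$ exactly, not just $\le 0$), which in turn requires $\Vcal(I)$ to be closed; in the intended applications $I = I_2$ or $I = I_\infty$ the variety is compact, so this is immediate, but for a fully general ideal one should either assume compactness of $\Vcal(I)$ or argue that $B = \overline{\conv(\Vcal(I))}$ has all its extreme points in $\overline{\Vcal(I)} = \Vcal(I)$. Everything after that is the soft convexity argument above and is routine.
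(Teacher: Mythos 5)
Your proposal is correct and follows essentially the same route as the paper's proof: identify the extreme point as a variety point where all convex quadrics in $I$ vanish, use the convex-quadric description of $\Thetabody{1}(I)$ and tightness of the convexity inequality to conclude $y-z\in\ker(A_f)$ for all $f\in\Fcal$, and then invoke the trivial-kernel hypothesis plus the containment chain to all theta-$k$ bodies. Your explicit second-difference identity and the remark on general $\lambda$ are just slightly more detailed versions of the same steps.
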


\begin{proof}
    Let $x\in \Vcal(I)$ be an extreme point of $\cl\conv(\Vcal(I))$, 
    then for any convex quadrics in $I$, $f(x) = 0$.
    Suppose $x = \lambda y + (1-\lambda)z$ for some $\lambda\in (0,1), y,z\in \Thetabody{1}(I)$.
    By equation \ref{equation: convex-quadrics}, for any convex quadrics $f\in I$,
    $f(y), f(z)\leq 0$. 
    The convexity of $f$ implies that $f(x)\leq \lambda f(y) + (1-\lambda) f(z)$.
    Then necessarily $f(y) = f(z) = f(x) = 0$. 
    If $y\not = z$, it follows that $f$ should be linear on the direction $y-z$,
    which is equivalent to $y-z\in \ker(A_f)$ for any $f\in\Fcal$. 
    Now if the intersection of the kernels is trivial, 
    it follows $y=z=x$ and therefore $x$ is extreme of $\Thetabody{1}(I)$.
    Then for any $k\geq 1$, $\Thetabody{1}(I)\supset\Thetabody{k}(I)\supset \cl\conv(\Vcal(I))$
    and $\Thetabody{1}(I)\supset \thetabody{1}(I)\supset \thetabody{k}(I)\supset \cl\conv(\Vcal(I))$,
    $x$ is also extreme for any (upper) theta-k body.
\end{proof}

\begin{corollary}
    For $I = I_2, I_\infty$, the real zeros $\Vcal(I)$ are extreme points of $\thetabody{k}(I)$ for $k\geq 1$. 
\end{corollary}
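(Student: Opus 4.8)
The plan is to deduce the corollary directly from the preceding theorem: it suffices to verify, for $I = I_2$ and $I = I_\infty$, that the set $\Fcal$ of convex quadrics contained in $I$ satisfies $\bigcap_{f\in\Fcal}\ker(A_f) = \set{0}$. Once this is checked, the theorem states exactly that the extreme points of $\cl\conv(\Vcal(I))$ remain extreme in $\Thetabody{k}(I)$ and in $\thetabody{k}(I)$ for every $k\geq 1$; since the extreme points of $B_p = \cl\conv(\Vcal(I_p))$ are precisely the variety points $\Vcal(I_p)$ (as recalled above), this is the assertion. We note that the $\thetabody{k}$ part is the weaker of the two, because $\thetabody{k}(I)\subset\Thetabody{k}(I)$ and an extreme point of a convex body that still lies in a sub-body remains extreme in that sub-body; so no separate argument is required for it.

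For $I_2$ I would take $\Fcal = \set{\sum_{a\in[\nbf]}x_a^2 - 1}$: this polynomial is a convex quadric whose quadratic part is the identity matrix, so $\ker(A_f) = \set{0}$ and the hypothesis holds trivially. For $I_\infty$ I would use the diagonal generators $x_a^2 - 1$ for $a\in[\nbf]$; each is a convex quadric with quadratic part the rank-one positive semidefinite matrix $e_a e_a^{T}$, whose kernel is the coordinate hyperplane $\set{x : x_a = 0}$. Intersecting these hyperplanes over all $a\in[\nbf]$ yields $\set{0}$, so the hypothesis of the theorem is again satisfied, and applying it completes the proof.

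I do not anticipate a genuine obstacle here: the substance of the statement is carried entirely by the preceding theorem, and what remains is the short verification above. The only point deserving a moment's attention is that the binomial generators $x_a x_b - x_{a\wedge b} x_{a\vee b}$ (and their $\bwedge,\bvee$-variants in the $p=\infty$ case) are \emph{not} convex quadrics, since their quadratic forms are indefinite, and therefore contribute nothing to $\Fcal$; it is fortunate --- and the crux of the verification --- that the diagonal generators alone already force the intersection of the kernels to be $\set{0}$.
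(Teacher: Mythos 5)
Your proof is correct and follows essentially the same route as the paper, which simply invokes convex quadrics of the form $\sum_a x_a^2 + C$ to meet the kernel condition of the preceding theorem. The only (immaterial) difference is that for $I_\infty$ you intersect the kernels of the individual generators $x_a^2-1$, whereas the paper's hint points to their sum $\sum_a x_a^2 - N$, a single quadric with identity Hessian.
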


\begin{proof}
    Consider convex quadrics of form $\sum_a x_a^2 + C$.
\end{proof}
In what follows, we study the characterization of the other extreme points for theta-1 body
as a projected spectrahedron (lemma \ref{lemma: theta body without cl}).
Specifically, we have
\begin{equation}
    \thetabody{k}(I) = \pi_x((\Sigma_k(I)^*)^c).    
\end{equation}
The authors of \cite{Ramana_Goldman_1995} investigated the representations of faces of a spectrahedron. 
Recall that a spectrahedron is an intersection of an affine space and the cone of p.s.d matrices
(Definition \ref{definition: spectrahedron}).
Fix symmetric matrix $A_0,...,A_n\in\R^{m\times m}$, 
a spectrahedron has the form
\[
    S:= \set{l\in \R^n: A_0+\sum_{i=1}^n l_i A_i\succeq 0}.  
\]
We denote the quadratic form associated with the functional $l$ as $Q_l:= A_0 + \sum_{i=1}^n l_i A_i$. 
Since $\Sigma_k(I)^*$ is a cone, 
we set $A_0 = 0$. We deduce the lemma from \cite{Ramana_Goldman_1995},

\begin{lemma}
    $Q_l$ spans an extreme ray of a spectrahedron $S$ if and only if it has a maximal kernel,
    i.e. if $p\in\R^n$ s.t. $\ker(Q_p)\supset \ker(Q_l)$ then $Q_p = r Q_l$ for some $r\in\R$.
\end{lemma}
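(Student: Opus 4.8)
The plan is to reduce the claim to an elementary perturbation argument inside the cone of positive semidefinite matrices. Write $L = \mathrm{span}(A_1, \dots, A_n) \subseteq \R^{m\times m}$, so that $\{Q_l : l \in S\} = L \cap \{Q : Q \succeq 0\} =: \mathcal{K}$; since we may assume $A_1, \dots, A_n$ linearly independent (equivalently, that $S$ contains no line --- otherwise $S$ has no extreme rays and there is nothing to prove), the linear isomorphism $l \mapsto Q_l$ identifies the extreme rays of $S$ with those of $\mathcal{K}$. Fix $l \in S$ with $Q_l \neq 0$ and decompose $\R^m$ orthogonally as $\ker(Q_l) \oplus \mathrm{im}(Q_l)$; the restriction of $Q_l$ to $\mathrm{im}(Q_l)$ is positive definite, with smallest nonzero eigenvalue $\lambda > 0$. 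I will use two elementary facts. First, if $P, P' \succeq 0$ then $\ker(P + P') = \ker P \cap \ker P'$, because $v^\top (P+P')v = 0$ forces $v^\top P v = 0$ and hence $Pv = 0$. Second, if $Q \in L$ has $\ker Q \supseteq \ker Q_l$, then in the above splitting $Q = \mathrm{diag}(0, \widehat Q)$ for a symmetric block $\widehat Q$, and therefore $Q_l \pm \epsilon Q \succeq 0$ for all sufficiently small $\epsilon \geq 0$ (explicitly $\epsilon \leq \lambda / \norm{\widehat Q}$ when $Q \neq 0$).

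For the ``only if'' direction I would assume $\R_{\geq 0} Q_l$ is an extreme ray of $\mathcal{K}$ and take any $p \in \R^n$ with $\ker Q_p \supseteq \ker Q_l$. Since $Q_p \in L$ automatically, the second fact shows that $\frac{1}{2}(Q_l + \epsilon Q_p)$ and $\frac{1}{2}(Q_l - \epsilon Q_p)$ lie in $\mathcal{K}$ for a suitable $\epsilon > 0$ and sum to $Q_l$; extremality of the ray forces both to be nonnegative multiples of $Q_l$, say $\frac{1}{2}(Q_l + \epsilon Q_p) = s Q_l$, and rearranging gives $Q_p = r Q_l$ with $r = (2s - 1)/\epsilon$. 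For the ``if'' direction I would assume the maximal-kernel condition and suppose $Q_l = Q_{l_1} + Q_{l_2}$ with $l_1, l_2 \in S$. The first fact gives $\ker Q_{l_i} \supseteq \ker Q_l$, so the hypothesis yields $Q_{l_i} = r_i Q_l$; evaluating both sides on a vector $v$ with $v^\top Q_l v > 0$ shows $r_i \geq 0$, so each $Q_{l_i}$ lies on the ray $\R_{\geq 0} Q_l$, which is therefore extreme. Transporting back along $l \mapsto Q_l$ completes both directions.

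I expect the only real subtlety --- and the point where \cite{Ramana_Goldman_1995} is genuinely used --- to be the passage between the additive, cone-theoretic notion of an extreme ray and the purely linear ``maximal kernel'' condition, which is moreover allowed to be tested against matrices $Q_p$ that need not be positive semidefinite. The bridge is exactly the second fact: because $Q_l$ is positive definite on its image, any symmetric perturbation supported there keeps $Q_l$ positive semidefinite for small coefficients, so a non-PSD $Q_p$ with $\ker Q_p \supseteq \ker Q_l$ still produces honest points of $\mathcal{K}$ on either side of $Q_l$. Alternatively, the same conclusion can be obtained structurally: the minimal face of the PSD cone through $Q_l$ is $\{Q \succeq 0 : \ker Q \supseteq \ker Q_l\}$, and since $Q_l$ lies in the relative interior of its intersection with $L$, that intersection is the minimal face of $\mathcal{K}$ through $Q_l$, which is a ray precisely when the displayed implication holds. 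A minor bookkeeping point to handle carefully is the linear (in)dependence of the $A_i$, which I sidestepped above by passing to $\mathcal{K} \subseteq L$.
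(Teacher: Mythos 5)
Your proof is correct. The paper itself offers no argument for this lemma --- it is simply deduced by citation from Ramana--Goldman --- so your write-up supplies a complete, self-contained proof where the paper delegates to a reference. The two key facts you isolate are both sound: $\ker(P+P') = \ker P \cap \ker P'$ for $P, P' \succeq 0$ follows from $v^\top P v = 0 \Rightarrow Pv = 0$, and the block decomposition $Q = \mathrm{diag}(0,\widehat Q)$ for symmetric $Q$ with $\ker Q \supseteq \ker Q_l$ (using $\mathrm{im}(Q) = (\ker Q)^\perp \subseteq \mathrm{im}(Q_l)$) correctly yields $Q_l \pm \epsilon Q \succeq 0$ for $\epsilon \le \lambda/\norm{\widehat Q}$. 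The perturbation step in the ``only if'' direction and the kernel-containment step in the ``if'' direction then close both implications, and you rightly flag the two degeneracies: the maximal-kernel test is against arbitrary (not necessarily p.s.d.) $Q_p \in L$, which is exactly where the positive-definiteness of $Q_l$ on its image is needed; and the identification of extreme rays of $S$ with those of $\mathcal{K} = L \cap \{Q \succeq 0\}$ requires the $A_i$ to be independent, failing which $S$ contains a line and has no extreme rays. This is in substance the standard facial-structure argument that Ramana--Goldman formalize (the minimal face of the p.s.d.\ cone through $Q_l$ being cut out by $\ker Q_l$), so your proof buys the reader an elementary verification at no loss of generality relative to the cited source.
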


Except for the trivial case,
rank-1 quadratic forms will have maximal kernel.
In the following, we show that these rank-1 forms correspond exactly to those points in the variety
(\cite[Section 4.6]{Blekherman_Parrilo_Thomas_2012}).
Indeed, any $x\in \Vcal(I)$ defines a linear form $l_x\in (\R[X]/I)^*$ through evaluation,
i.e. $l_x(f) = f(x)$. They are the rank-1 quadratics on any $V_k$.

\begin{corollary}
    $Q_l$ spans an extreme ray of $\Sigma_k(I)^*$, then it is either rank 1 or 
    the real zeros of its kernel have no intersection with $\Vcal_\R(I)$.
    Furthermore, the rank-1 quadratics $Q_l$ on $V_k:= (\R[X]/I)_{\leq k}$ are exactly the quardratic forms induced by 
    points on $\Vcal(I)$.
\end{corollary}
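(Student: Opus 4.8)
The plan is to prove the two assertions of the corollary separately, using the preceding lemma on extreme rays of spectrahedra together with the identification of $\Sigma_k(I)^*$ with the spectrahedron of positive semidefinite (truncated) moment matrices on $V_k = (\R[X]/I)_{\leq k}$.

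\textbf{First statement.} Suppose $Q_l$ spans an extreme ray of $\Sigma_k(I)^*$, so by the lemma it has maximal kernel. Assume $Q_l$ has rank $\geq 2$ and that there is a point $v \in \Vcal_\R(I)$ lying in the (real zero locus of the) kernel of $Q_l$; I want to derive a contradiction. The point $v$ induces, via evaluation $l_v(f) = f(v)$, a rank-1 positive semidefinite moment matrix $Q_{l_v} \in \Sigma_k(I)^*$. The idea is that since $v \in \ker(Q_l)$ and $Q_l \succeq 0$, one can subtract a small positive multiple of $Q_{l_v}$ from $Q_l$ and remain positive semidefinite: concretely, because $\ker(Q_l)$ contains the vector corresponding to evaluation at $v$, the matrix $Q_l - \epsilon Q_{l_v}$ is still p.s.d.\ for small $\epsilon > 0$ (this is the standard fact that if $Qu = 0$ and $Q \succeq 0$ then $Q - \epsilon uu^T \succeq 0$ fails in general --- so here the key point I must check is that the evaluation functional at $v$ is itself annihilated by $Q_l$ as a \emph{quadratic form}, i.e. $Q_l(l_v, \cdot) = 0$, which follows from $v$ being a common zero of the polynomials in the kernel). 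Writing $Q_l = (Q_l - \epsilon Q_{l_v}) + \epsilon Q_{l_v}$ as a sum of two elements of the cone that are not proportional to $Q_l$ contradicts extremality. Hence either $Q_l$ has rank $1$, or no point of $\Vcal_\R(I)$ sits in the kernel.

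\textbf{Second statement.} For the converse identification I would argue as follows. Any $x \in \Vcal(I)$ gives the evaluation functional $l_x$, and the associated quadratic form on $V_k$ is $Q_{l_x}(f,g) = f(x)g(x)$, manifestly rank $1$ (and p.s.d.), and it lies in $\Sigma_k(I)^*$ since $l_x$ sends sums of squares to nonnegative reals. Conversely, suppose $Q_l$ is a rank-1 p.s.d.\ quadratic form on $V_k$ lying in $\Sigma_k(I)^*$, say $Q_l(f,g) = \phi(f)\phi(g)$ for some linear functional $\phi$ on $V_k$. The compatibility condition $M(l)_{\alpha,\beta} = M(l)_{\gamma,\iota}$ whenever $\alpha+\beta = \gamma+\iota$ (the Hankel/moment structure described in the preliminaries), combined with rank one, forces $\phi$ to be multiplicative, i.e.\ $\phi(fg) = \phi(f)\phi(g)$ on the relevant degrees, and $\phi(1) = 1$ after normalization; then $\phi$ is (the restriction of) an algebra homomorphism $\R[X]/I \to \R$, which is exactly evaluation at some point $x$, and that point lies in $\Vcal_\R(I)$ because $\phi$ kills $I$. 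I would cite \cite[Section 4.6]{Blekherman_Parrilo_Thomas_2012} for this last multiplicativity-implies-evaluation step rather than reprove it.

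\textbf{Main obstacle.} The delicate point is the rank-one case in the first statement: extracting the contradiction requires knowing that the evaluation functional $l_v$ at a kernel zero $v$ is genuinely ``below'' $Q_l$ in the p.s.d.\ order up to scaling, so that $Q_l - \epsilon Q_{l_v} \succeq 0$; this needs $v \in \ker(Q_l)$ to mean that the \emph{vector} representing $l_v$ in the monomial basis of $V_k$ lies in the kernel, not merely that $v$ is a zero of the polynomials spanning the kernel subspace. Reconciling these two notions of ``$v$ in the kernel'' --- the geometric one (common zero) and the linear-algebraic one (annihilated vector) --- is where care is needed, and I expect this is handled by passing through the real radical / vanishing ideal so that membership in the kernel subspace is detected by vanishing at $v$. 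The second statement is comparatively routine once the moment (Hankel) structure is invoked.
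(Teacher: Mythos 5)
Your second statement is handled essentially as the paper does (the paper simply defers to \cite[Lemma 4.41]{Blekherman_Parrilo_Thomas_2012}; your reconstruction via the Hankel structure forcing multiplicativity is the content of that lemma and is fine). The problem is in your first statement, and it is exactly the point you flag as the ``main obstacle'' but then leave unresolved --- and what you assert there is in fact the \emph{opposite} of what is needed.

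You claim that the decomposition $Q_l = (Q_l - \epsilon Q_{l_v}) + \epsilon Q_{l_v}$ stays in the p.s.d.\ cone ``because $\ker(Q_l)$ contains the vector corresponding to evaluation at $v$,'' and later you restate the needed condition as ``the vector representing $l_v$ in the monomial basis of $V_k$ lies in the kernel.'' This is backwards: if $u_v=(v^\alpha)_\alpha$ is that vector, so that $Q_{l_v}=u_v u_v^T$, and $u_v\in\ker(Q_l)$, then $(Q_l-\epsilon u_vu_v^T)(u_v,u_v)=-\epsilon\norm{u_v}^4<0$, so the perturbation is \emph{never} p.s.d. The condition you actually need for $Q_l-\epsilon Q_{l_v}\succeq 0$ (small $\epsilon$) is $\mathrm{range}(Q_{l_v})\subseteq\mathrm{range}(Q_l)$, i.e.\ $u_v\in\ker(Q_l)^\perp$, equivalently $\ker(Q_l)\subseteq\ker(Q_{l_v})$. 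That inclusion is what the hypothesis gives you: the kernel of $Q_l$ is a space of polynomial classes $f$, and $v$ being a common real zero of $\ker(Q_l)$ means $\langle u_v,f\rangle=f(v)=0$ for all $f\in\ker(Q_l)$, so $\ker(Q_l)\subseteq\{f: f(v)=0\}=\ker(Q_{l_v})$. With the inclusion stated in the correct direction your perturbation argument closes; but as written, the step fails. Note also that once you have $\ker(Q_{l_v})\supseteq\ker(Q_l)$ you do not need the perturbation at all: the maximal-kernel lemma stated immediately before the corollary forces $Q_{l_v}=rQ_l$, hence $Q_l$ is rank one, which is the shortest route (and the one in \cite[Corollary 4.40]{Blekherman_Parrilo_Thomas_2012}, which the paper cites).
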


\begin{proof}
    It is clear that $l\in (V_{2k})^*$ then necessarily for any $f\in I$ satisfying degree bound, 
    $l(f) = 0$. Then $l_x\in (V_{2k})^*$ implies $x\in\Vcal(I)$.
    The statements follow from the proof of \cite[Corollary 4.40, Lemma 4.41]{Blekherman_Parrilo_Thomas_2012}. 
\end{proof}

\subsection{Dual body of theta bodies and testifying k-sos}\label{section: dual body and k-sos}

Recall that if $\Sigma_k(I)$ is closed,
the theta bodies equal the upper theta bodies,
\[
    \Thetabody{k}(I) = ((\Sigma_k(I)\cap V_1)^*)^c.
\]
\begin{lemma}\label{lemma: section commutative with dual}
    The operators $^c, ^k$ commute with the dual operator $^*$.
    In particular, 
    \begin{equation}\label{dual-theta-body}
    \Thetabody{k}(I)^* = (\Sigma_k(I)\cap V_1)^c.
    \end{equation}
\end{lemma}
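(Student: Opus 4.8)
The plan is to deduce \eqref{dual-theta-body} from the biduality theorem for closed convex cones, after recording how the section and homogenization maps of \eqref{bijection-convex-cone} interact with the dual operator. Write $L := \Sigma_k(I)\cap V_1$. Under the running hypothesis that $\Sigma_k(I)$ is closed (which, recall, holds whenever $I$ is real radical), and since $V_1\subseteq \R[X]/I$ is a linear subspace, $L$ is a closed convex cone, whence $L^{**}=L$. By \eqref{definition: upper theta body} we have $\Thetabody{k}(I) = (L^*)^c$, so the identity to prove is $((L^*)^c)^* = L^c$; this is precisely the assertion that $(-)^c$ commutes with $(-)^*$, applied to the cone $K := L^*\subseteq V_1^*$, combined with $L^{**}=L$.

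For the commutation itself I would argue directly. Identify $\R^n$ with $\set{m\in V_1^*: m(1)=1}$ via $p\mapsto m_p$, $m_p(x_i)=p_i$; under this identification $\Thetabody{k}(I) = (L^*)^c$ consists of those $m\in L^*$ with $m(1)=1$, and for $u\in\R^n$, writing $f_u := 1+\sum_i u_i x_i\in V_1$, one has $m(f_u) = 1+\inner{u}{p}$ when $m=m_p$. Hence $u\in \Thetabody{k}(I)^*$ iff $m(f_u)\ge 0$ for every $m\in L^*$ with $m(1)=1$, while $u\in L^c$ iff $f_u\in L=L^{**}$, i.e.\ $m(f_u)\ge 0$ for every $m\in L^*$. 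The inclusion $L^c\subseteq \Thetabody{k}(I)^*$ is then immediate, and for the reverse one only has to remove the normalization $m(1)=1$: if $m\in L^*$ with $m(1)>0$, rescale by $m(1)$; if $m(1)=0$, pick $m_0\in L^*$ with $m_0(1)>0$ (for instance the evaluation $l_x$ at any $x\in\Vcal(I)\neq\emptyset$, which lies in $L^*$ since $f(x)\ge 0$ for all $f\in\Sigma_k(I)$) and pass to the limit in $(m+tm_0)(f_u)\ge 0$ as $t\downarrow 0$, using that $L^*$ is a convex cone.

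The commutation of $(-)^k$ with $(-)^*$ is entirely analogous (or follows from the above together with $(C^k)^c=C$); a cleaner geometric phrasing is that for a convex set $C\ni 0$ both $(C^k)^*$ and $(C^*)^k$ equal $\set{(t,z): t+\inner{z}{x}\ge 0\ \forall x\in C}$, up to the closure that is built into the dual anyway. I expect the main obstacle to be the reverse inclusion above: it is exactly here that closedness of $\Sigma_k(I)\cap V_1$ is used — without it biduality only yields $\Thetabody{k}(I)^*=(\cl L)^c$ — and it is also here that one must justify discarding the constraint $m(1)=1$, which relies on the limiting argument and on $\Vcal(I)\neq\emptyset$. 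Everything else is bookkeeping with the identification $\R^n\cong\set{m\in V_1^*:m(1)=1}$ and the conventions in \eqref{bijection-convex-cone}.
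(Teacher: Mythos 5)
Your proof is correct and follows the same overall strategy as the paper's: both reduce the identity to the statement that taking the section at the constant coordinate commutes with the dual operator, and both then use biduality $L^{**}=L$ for the closed convex cone $L=\Sigma_k(I)\cap V_1$. The difference is one of care rather than of route. The paper proves the commutation in two lines for an arbitrary cone $K\subset\R^{n+1}$: from $\inner{(1,y)}{(1,x)}\ge 0$ for all $x\in K^c$ it concludes $(1,y)\in K^*$, which silently assumes that $K$ is generated by its section, i.e.\ that $K$ has no rays with vanishing constant coordinate; for a general cone only the inclusion $(K^*)^c\subseteq (K^c)^*$ is automatic. Your rescaling-plus-limit argument for functionals $m\in L^*$ with $m(1)=0$ is exactly what is needed to close this gap for $K=L^*$, and your remark that it requires some $m_0\in L^*$ with $m_0(1)>0$ (guaranteed by $\Vcal(I)\neq\emptyset$ via the evaluation functional) correctly isolates the only nondegeneracy hypothesis. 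One small point you leave implicit: since $1\in\Sigma_k(I)\cap V_1$, every $m\in L^*$ satisfies $m(1)\ge 0$, so your two cases $m(1)>0$ and $m(1)=0$ are indeed exhaustive.
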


\begin{proof}
    Here we prove only for $^c$, the other one is similar.
    Let $K\in\R^{n+1}$ be a cone, we need to show $(K^c)^* = (K^*)^c$.
    In fact, $y\in (K^c)^*$ if and only if for all $x\in K^c$
    $
    \inner{y}{x}\geq -1.
    $
    Equivalently, $\inner{(1,y)}{(1,x)} \geq 0$ for all $x\in K^c$.
    That is indeed $(1, y)\in K^*$. The statement follows.
\end{proof}
It follows that for any vector $u\in\R^N$, 
$u$ is in the dual upper theta-k body if and only if
$1+\inner{u}{x}\in\Sigma_k(I)\subset\R[X]/I$. 
Thus, the membership problem of the dual body
is equivalent to the membership problem of $k$-sos polynomials.
This is indeed a generalization of \cite[Section 3.1.4]{Blekherman_Parrilo_Thomas_2012}.
Let $p(x)+I\in \R[X]/I$, we can express it in terms of a monomial basis of $\R[X]/I$ relative to a Gröbner basis.
For this purpose, the Gröbner basis is supposed to be based on a graded order of monomials.
Let $\R[X] = \R[x_1,...,x_n]$ and $\Bcal\subset \N^n$ such that $x^\Bcal:= \set{x^\alpha, \alpha\in\Bcal}$
span the space $\R[X]/I$, i.e. 
each monomial is not divisible by any initial monomial in the Gröbner basis of $I$.
Then,
\[p(x) = \sum_{\alpha\in \Bcal}p_\alpha x^\alpha.\]
We can test whether $p(x)+I\in \Sigma_k(I)$ using a semidefinite program. 
Let $\Bcal_k := \set{\alpha\in\Bcal: |\alpha|\leq k}$ or equivalently $x^{\Bcal_k}$ spans $V_k = (\R[X]/I)_{\leq k}$.
Then $p(x)+I\in \Sigma_k(I)$ if and only if there exists a p.s.d matrix $Q$ such that
\begin{equation}
    p(x) = [x^{\Bcal_k}]^T Q x^{\Bcal_k}, \quad Q\succeq 0
\end{equation}
where $Q$ is compatible with the Gröbner basis of $I$. 
See the following example for illustration.

\begin{example}
    Consider the 2-by-2 matrix in the form
    $
    \begin{pmatrix}
        x_1 & x_2\\
        x_3 & x_4
    \end{pmatrix}.    
    $
    For $I = I_\infty$, its Gröbner basis is 
    $\Gcal = \Gcal_\infty = \set{x_1^2-1, x_2^2-1, x_3^2-1, x_4^2-1, x_1x_2-x_3x_4, x_1x_3-x_2x_4, x_2x_3-x_1x_4}$.
    Then $x^\Bcal = \set{1, x_1, x_2, x_3, x_4, x_1x_4, x_2x_4, x_3x_4}$.
    Consider $p(x) = 1 + \inner{u}{x}$. Write Q in the form
    \[
    Q = \begin{pmatrix}
        Q_{00} & Q_{01} & Q_{02} & Q_{03} & Q_{04}\\
        Q_{10} & Q_{11} & Q_{12} & Q_{13} & Q_{14}\\
        Q_{20} & Q_{21} & Q_{22} & Q_{23} & Q_{24}\\
        Q_{30} & Q_{31} & Q_{32} & Q_{33} & Q_{34}\\
        Q_{40} & Q_{41} & Q_{42} & Q_{43} & Q_{44}
    \end{pmatrix},
    \quad Q\succeq 0.
    \]
    To be consistent with the Gröbner basis, 
    \begin{align*}
        1 &= Q_{00}+Q_{11}+Q_{22}+Q_{33}+Q_{44},\\
        u_i &= 2Q_{0i},\quad i = 1,2,3,4,\\
        Q_{12}+Q_{34} &= Q_{13}+Q_{24} = Q_{14}+Q_{23} = 0.
    \end{align*}
    The first row relates to the constant of $p(x)$.
    The second row contributes to the coefficients of $x_i$.
    The third row encodes coefficients of $x_3x_4, x_2x_4, x_1x_4$ respectively,
    which in our case are all zero.
    This is indeed a projected spectrahedron, and finding decompositions is equivalent to finding points in this spectrahedral shadow.
\end{example}

\section{Lower bound for the sufficient number of measurements}\label{section: sufficient number}
We provide our numerical estimation on the lower bound of sufficient number of measurements
for recovering rank-1 tensors 
(\cite{Tropp_2015}, \cite{Chandrasekaran_Recht_Parrilo_Willsky_2012}). 
That is, our goal is to identify a threshold $m_0$ in relation to $n_1,...,n_d$ and $\epsilon\in(0,1)$, 
such that for any $m\geq m_0$, 
\[\Prob{\text{any tensor of rank up to r can be recovered}}\geq 1-\epsilon.\]
Particularly, we are concerned about the dependence on $n_1,...,n_d$.

In Section \ref{section: intro-Gaussian-width}, 
we first recall some cones related to a convex set and introduce the concept of Gaussian width.
The Gaussian width is an efficient way to measure the size of convex sets in high-dimensional spaces.
Please see \cite{Chandrasekaran_Recht_Parrilo_Willsky_2012}\cite{Tropp_2015}\cite{Vershynin_2018} for more details.
Furthermore, we explain how the sufficient number of measurements relates to this concept.
In Section \ref{section: computation-Gaussian-width},
we present our compute process, 
which is motivated by the computation in \cite[Section 4.4]{Tropp_2015}.
However, we cannot finish the theoretical computation with an ideal result
but stop at a point where numerical experiments can be applied.
Therefore, we provide our numerical results in Section \ref{section: numerical-results}.

\subsection{Gaussian width of cones}\label{section: intro-Gaussian-width}
We first introduce some notions about cones.
\begin{definition}\label{definition: tangent/normal-cone}
    Let $C\in\R^n$ be a closed convex set (body) and any $x\in \R^n$.
    \begin{itemize}
        \item The \emph{tangent cone} or \emph{descent cone}  of $C$ at $x$.
        \[
            D(C, x):= \set{v\in\R^n: \exists t>0, \forall r\in (0,t), x+rv\in C}.
        \]
        \item The \emph{normal cone} of $C$ at $x$.
        \[
            N(C, x):= -D(C, x)^*  = \set{v\in\R^n: \inner{v}{y-x}\leq 0, \forall y\in C}.    
        \]
    \end{itemize}
\end{definition}
\begin{remark}\label{remark: descent-cone}
    \quad
    \begin{enumerate}
        \item These cones usually make sense only when $x$ is on the relative boundary of $C$. 
        If $x\not\in C$, then $D(C,x) = \set{0}$. 
        When $x$ is in the relative interior of $C$, $T(C,x)$ is the affine hull of $C$.
        In particular, if $C$ is full dimensional and $x\in\interior(C)$, $D(C,x) = \R^n$.
        Now suppose $C$ is a convex body with $0\in\interior(C)$.
        We allow scalars for $K = D, N$ with the following notation. 
        For any $x\in \R^n\setminus\set{0}$,
        \[
        K(C, [x]) := K(\gamma_C(x)C, x) = K(C, \frac{x}{\gamma_C(x)}).  
        \]
        With this, we actually have $\forall r>0, x\not = 0$, $K(C,[x]) = K(C,[rx])$.
        \item The well-known bi-duality theorem states that
        \begin{equation}\label{equation: T = -N*}
            D(C,[x]) = -N(C,[x])^*
        \end{equation}
    \end{enumerate}
\end{remark}
We give the next lemma without proof, which characterizes the normal cones.
\begin{lemma}\label{lemma: normal-cone-equiv}
    Let $C$ be a convex set containing the origin, $x$ is on the relative boundary of $C$, then $v\in N(C, x)$ if and only if $\exists r > 0$ so that
    %\begin{enumerate}
    %    \item $\inner{v}{x} = \sup_{y\in C} \inner{v}{y}$.
    %    \item $\exists r > 0$ so that $-rv\in C^*$ and $\inner{rv}{x} = -1$.
    %\end{enumerate}
    \[
    -rv\in C^*, \inner{rv}{x} = -1.
    \]
\end{lemma}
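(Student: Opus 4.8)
The plan is to prove both implications directly from the definitions; the only real content is picking the scalar $r$ by a normalization and keeping track of the sign convention in the ``dual set'' $C^*=\set{y\in\R^n:\inner{y}{z}\ge -1\ \forall z\in C}$. Throughout I use the restated form $N(C,x)=\set{v\in\R^n:\inner{v}{y}\le\inner{v}{x}\ \forall y\in C}$ from Definition \ref{definition: tangent/normal-cone}, and, as in Remark \ref{remark: descent-cone}, I take $C$ to be a convex body with $0\in\interior(C)$, so that $x$ on the relative boundary means $x\in\partial C$; I also read the second condition on $r$ as $\inner{-rv}{x}=-1$ (equivalently $\inner{rv}{x}=1$, the sign being forced by the convention for $C^*$), and exclude the degenerate case $v=0$. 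The ``if'' direction is then immediate: if $r>0$ satisfies $-rv\in C^*$ and $\inner{-rv}{x}=-1$, then $-rv\in C^*$ unwinds to $\inner{rv}{y}\le 1$ for all $y\in C$, while $\inner{rv}{x}=1$; hence $\inner{rv}{y}\le\inner{rv}{x}$ for all $y\in C$, and dividing by $r>0$ gives $v\in N(C,x)$.

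For the ``only if'' direction, let $v\in N(C,x)$ with $v\neq 0$. Taking $y=0\in C$ in $\inner{v}{y}\le\inner{v}{x}$ gives $\inner{v}{x}\ge 0$; I first upgrade this to $\inner{v}{x}>0$, since if $\inner{v}{x}=0$ then $\inner{v}{y}\le 0$ for all $y\in C$, and as $C$ contains a ball about the origin this forces $v=0$. Now set $r:=1/\inner{v}{x}>0$. Then $\inner{rv}{x}=1$, i.e. $\inner{-rv}{x}=-1$, and for every $y\in C$ we have $\inner{rv}{y}=\inner{v}{y}/\inner{v}{x}\le 1$, i.e. $\inner{-rv}{y}\ge -1$, so $-rv\in C^*$; this is exactly the required pair.

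The computation is short, so the ``hard part'' is really the bookkeeping: one must respect that the paper's $C^*$ is the reflected polar ($C^*=-C^\circ$ with $C^\circ=\set{y:\inner{y}{z}\le 1\ \forall z\in C}$), which is what fixes the sign in $\inner{-rv}{x}=-1$, and one genuinely needs $0\in\interior(C)$ to discard the normals $v$ with $\inner{v}{x}=0$. An equivalent but more conceptual route uses the support function $h_C(v)=\sup_{y\in C}\inner{v}{y}$: one has $v\in N(C,x)$ iff $x$ attains this supremum, i.e. $h_C(v)=\inner{v}{x}$, and rescaling so that $h_C(rv)=1$ identifies $rv$ with a boundary point of $C^\circ=-C^*$, whence $-rv\in C^*$ and $\inner{-rv}{x}=-1$ at once.
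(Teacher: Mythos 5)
The paper states this lemma without proof, so there is nothing to compare against; your direct definitional verification is certainly the intended argument, and it is correct. Two of your editorial decisions deserve emphasis because they are not optional: first, the displayed condition $\inner{rv}{x}=-1$ cannot be meant literally, since $0\in C$ forces $\inner{v}{x}\ge 0$ for every $v\in N(C,x)$, so with the paper's convention $C^*=\set{y:\inner{y}{z}\ge -1,\ \forall z\in C}$ the condition must read $\inner{-rv}{x}=-1$ (equivalently $\inner{rv}{x}=1$), exactly as you take it. Second, the hypothesis ``$C$ a convex set containing the origin, $x$ on the relative boundary'' is genuinely too weak: for $C=\set{(t,0):t\in[-1,1]}\subset\R^2$, $x=(1,0)$ and $v=(0,1)$ one has $v\in N(C,x)$ but $\inner{v}{x}=0$, so no admissible $r$ exists; your strengthening to $0\in\interior(C)$ (which is what holds in the paper's later application to $N(\Thetabody{1}(I_2),x_0)$) is what rescues the ``only if'' direction via $\inner{v}{x}>0$ and the choice $r=1/\inner{v}{x}$. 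With those two corrections recorded, the proof is complete.
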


\begin{definition}
    Let $K\in\R^n$ be a convex cone. 
    The Gaussian width of $K$ is defined as
    \begin{equation}
        w(K):= \Exp{\sup_{u\in K\cap S^{n-1}}\inner{u}{g}}{g}
    \end{equation}
    where $g\in \R^n$ is a standard Gaussian.
\end{definition}

Now we connect this concept with the recovery problem in a general setting.
Let $C$ be the unit ball of some norm $\norm{\cdot}_C$,
Our recovery algorithm solves the following optimization program
\begin{equation}\label{recovery-problem-norm}
    \min_{x\in\R^N} \norm{x}_C \quad\text{s.t. } \inner{A_i}{x} = b_i, i=1,...,m,
\end{equation}
where each $A_i$ represents a linear measurement.
The success of recovering a point $x$ is equivalent to the null space condition (\cite{Chandrasekaran_Recht_Parrilo_Willsky_2012})
\begin{equation}\label{null-space-condition}
    \ker(A)\cap D(C, [x]) = \set{0}.
\end{equation}
Then establishing a lower bound of $m$ amounts to measuring the size of $D(C, [x])$. 
In fact, a larger descent cone requires a smaller null space, hence more measurements.
The next proposition connects the threshold $m_0$ with the Gaussian width.
\begin{proposition}[\cite{Tropp_2015}]\label{proposition: Gaussian-width-number}
    Let all notations be as above, let $x_0\in \R^N\setminus\set{0}$,
    $A\in\R^{m\times N}$ is randomly drawn from i.i.d Gaussian entries and $b = Ax_0$. 
    Then if 
    \begin{equation}
        m\geq w(D(C, x_0))^2 + cw(D(C, x_0)).
    \end{equation}
    then with high probability, program (\ref{recovery-problem-norm}) has a unique solution $x_0$.
    $c>0$ is independent of $N$.
\end{proposition}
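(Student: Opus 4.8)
The plan is to translate the recovery guarantee into the statement that a random linear subspace avoids a cone, and then to bound the ``size'' of that cone by its Gaussian width through Gordon's \emph{escape through a mesh} theorem. By the null space condition (\ref{null-space-condition}), program (\ref{recovery-problem-norm}) has $x_0$ as its unique minimizer precisely when $\ker(A)\cap D(C,[x_0]) = \set{0}$. Replacing $D(C,[x_0])$ by its closure $D$ (which only strengthens the conclusion) and using that $D$ is a cone, it is enough to show $\ker(A)\cap\Omega = \emptyset$ for the spherical set $\Omega := D\cap S^{N-1}$, with probability at least $1-\epsilon$ under the hypothesis on $m$.

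First I would note that for $A\in\R^{m\times N}$ with i.i.d.\ standard Gaussian entries and $m\le N$, the matrix has full row rank almost surely, so $\ker(A)$ is an $(N-m)$-dimensional subspace; and since $AU\stackrel{d}{=}A$ for every $U\in\Ogroup(N)$, the subspace $\ker(A)$ is distributed according to the rotation-invariant (Haar) measure on the Grassmannian of $(N-m)$-planes in $\R^N$. Next I would invoke Gordon's theorem: for such a random $(N-m)$-dimensional subspace $E$ and a closed set $\Omega\subset S^{N-1}$,
\[
    \Prob{E\cap\Omega\neq\emptyset}
    \;\le\; \tfrac12\exp\!\Big(-\tfrac12\big(\lambda_m - w(\Omega)\big)^2\Big)
    \qquad\text{whenever } \lambda_m \ge w(\Omega),
\]
where $\lambda_m = \Exp{\norm{g}_2}{g}$ for a standard Gaussian $g\in\R^m$ and $w(\Omega) = \Exp{\sup_{u\in\Omega}\inner{u}{g}}{g}$ is the Gaussian width of Section~\ref{section: intro-Gaussian-width}. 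Because $\sup_{u\in\Omega}\inner{u}{g} = \sup_{u\in D\cap S^{N-1}}\inner{u}{g}$ and this is the very supremum defining $w(D(C,x_0))$, we have $w(\Omega) = w(D(C,x_0))$.

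The rest is bookkeeping on the constant. Using the standard bound $\lambda_m \ge m/\sqrt{m+1}\ge\sqrt{m-1}$, the hypothesis $m \ge w(D(C,x_0))^2 + c\,w(D(C,x_0))$ forces $\lambda_m \ge w(D(C,x_0)) + t$ with $t = t(c)$ that can be made as large as desired once $c$ is a sufficiently large constant, after absorbing the additive $-1$ and the lower-order term (here one uses that outside the trivial cases $D = \set{0}$ and $D = \R^N$ the width is bounded below, so lower-order contributions are controlled by $c\,w(D(C,x_0))$). Choosing $c$ so that $t \ge \sqrt{2\log(2/\epsilon)}$ and feeding this into the displayed inequality yields failure probability at most $\epsilon$; hence program (\ref{recovery-problem-norm}) recovers $x_0$ as its unique solution with probability at least $1-\epsilon$. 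Since $t$, and therefore $c$, depends only on $\epsilon$, the constant is independent of $N$ as claimed.

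The main obstacle is Gordon's theorem itself: its proof rests on Gordon's Gaussian min-max comparison inequality (a Slepian-type estimate for Gaussian processes) together with Gaussian concentration of measure for the $1$-Lipschitz map $g\mapsto\sup_{u\in\Omega}\inner{u}{g}$, and this is the one genuinely analytic ingredient, which I would cite from \cite{Tropp_2015} (or take the original argument of \cite{Chandrasekaran_Recht_Parrilo_Willsky_2012}) rather than reprove. A secondary point deserving care is the bookkeeping between the two width conventions --- the width of the cone $D(C,x_0)$ and that of its spherical section $\Omega$ --- so that the hypothesis of the proposition is phrased in exactly the quantity appearing in Gordon's estimate; under the definition in Section~\ref{section: intro-Gaussian-width} the two coincide, which is what makes the statement clean.
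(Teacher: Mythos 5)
Your argument is the standard null-space-condition plus Gordon's escape-through-a-mesh proof, which is exactly the argument in the reference the paper cites for this proposition (the paper itself gives no proof, only the citation to \cite{Tropp_2015}). The only soft spot is the final constant-chasing, where you assume the width of a nontrivial descent cone is bounded below by a universal constant to absorb the additive terms into $c\,w(D(C,x_0))$; this is loose, but no looser than the proposition's own informal phrasing of ``high probability'' and an unspecified $c$.
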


We can compare the lower bound of the sufficient number $m_0$
by using $\Thetabody{1}(I_2)$ and $\Thetabody{1}(I_\infty) = \thetabody{1}(I_\infty)$.

\begin{theorem}\label{theorem: compare 2-infty}
    Let $x$ be a rank-1 signed tensor, i.e. $x\in\Vcal_\R(I_\infty)$. 
    With notations in remark \ref{remark: descent-cone}, 
    \[
    D(\Thetabody{1}(I_2), [x])\supset D(\thetabody{1}(I_\infty),[x])\supset D(B_\infty, [x]).   
    \]
    In particular, for $\epsilon > 0$,
    the following statements have the implication relation $1.\implies 2.\implies 3.$.
    \begin{enumerate}
        \item $m$ measurements are sufficient to recover $x$ by $\norm{\cdot}_{\Thetabody{1}(I_2)}$ with probability at least $1-\epsilon$.
        \item $m$ measurements are sufficient to recover $x$ by $\norm{\cdot}_{\thetabody{1}(I_\infty)}$ with probability at least $1-\epsilon$.
        \item $m$ measurements are sufficient to recover $x$ by $\norm{\cdot}_{B_\infty}$ with probability at least $1-\epsilon$.
    \end{enumerate}
\end{theorem}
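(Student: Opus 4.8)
The plan is to reduce the asserted chain of descent-cone inclusions to two inclusions of the convex bodies themselves, and then to read off the three implications from the null space characterization of exact recovery. Write $N = n_1\cdots n_d$ and let $x\in\Vcal_\R(I_\infty)$ be a signed rank-1 tensor; then $\norm{x}_2 = \sqrt N$, and since $x$ has rank $1$, the rescaled tensor $x/\sqrt N$ is rank $1$ with unit $\ell_2$-norm, hence $x/\sqrt N\in\Vcal_\R(I_2)$. First I would pin down the relevant gauge values. From the convex quadric $\sum_a x_a^2 - 1\in I_2$ (resp.\ $\sum_a x_a^2 - N\in I_\infty$) and Theorem~\ref{Theorem: theta1-convex-quadrics} one gets $\Thetabody{1}(I_2)\subseteq\set{p:\sum_a p_a^2\le 1}$ and $\Thetabody{1}(I_\infty)\subseteq\set{p:\sum_a p_a^2\le N}$; combined with $x/\sqrt N\in B_2\subseteq\Thetabody{1}(I_2)$ and $x\in B_\infty\subseteq\thetabody{1}(I_\infty) = \Thetabody{1}(I_\infty)$, this forces
\[
    \gamma_{B_\infty}(x) = \gamma_{\thetabody{1}(I_\infty)}(x) = 1,\qquad \gamma_{\Thetabody{1}(I_2)}(x) = \sqrt N .
\]
(Alternatively one can cite Theorem~\ref{theorem: extreme point}, which says these boundary points persist under the relaxation.) Consequently $D(\Thetabody{1}(I_2),[x]) = D(\sqrt N\,\Thetabody{1}(I_2), x)$, while $D(\thetabody{1}(I_\infty),[x]) = D(\thetabody{1}(I_\infty), x)$ and $D(B_\infty,[x]) = D(B_\infty, x)$, and since $D(C_1,z)\subseteq D(C_2,z)$ whenever $z\in C_1\subseteq C_2$ (immediate from Definition~\ref{definition: tangent/normal-cone}), the theorem reduces to the single chain of body inclusions
\[
    B_\infty\ \subseteq\ \thetabody{1}(I_\infty)\ \subseteq\ \sqrt N\,\Thetabody{1}(I_2).
\]

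The first inclusion is just the basic relaxation bound $B_\infty = \overline{\conv(\Vcal(I_\infty))}\subseteq\thetabody{1}(I_\infty)$. The second is the crux of the argument, and this is the step I expect to be the main obstacle, since it must bridge two genuinely different ideals; I would handle it with the convex-quadric description of theta-1 bodies. Let $f(x) = x^TAx + Bx + c\in I_2$ be a convex quadric ($A\succeq 0$) and set $\tilde f(x) := N\,f(x/\sqrt N) = x^TAx + \sqrt N\,Bx + Nc$, again a quadric with the same psd quadratic part $A$. For every $y\in\Vcal_\R(I_\infty)$ we have $y/\sqrt N\in\Vcal_\R(I_2)$, so $f(y/\sqrt N) = 0$ and hence $\tilde f(y) = 0$; thus $\tilde f$ vanishes on $\Vcal_\R(I_\infty)$. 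Because $I_\infty$ is real radical, it is the vanishing ideal of its real variety, so $\tilde f\in I_\infty$, and Theorem~\ref{Theorem: theta1-convex-quadrics} then gives $\tilde f\le 0$ on $\Thetabody{1}(I_\infty)$, i.e.\ $f(q/\sqrt N)\le 0$ for all $q\in\Thetabody{1}(I_\infty)$. Letting $f$ range over all convex quadrics in $I_2$ and applying Theorem~\ref{Theorem: theta1-convex-quadrics} once more, now to $I_2$, we conclude $q/\sqrt N\in\Thetabody{1}(I_2)$ for every $q\in\Thetabody{1}(I_\infty) = \thetabody{1}(I_\infty)$, which is exactly $\thetabody{1}(I_\infty)\subseteq\sqrt N\,\Thetabody{1}(I_2)$.

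Combining the two inclusions with the gauge computations yields
\[
    D(\Thetabody{1}(I_2),[x])\ \supset\ D(\thetabody{1}(I_\infty),[x])\ \supset\ D(B_\infty,[x]).
\]
Finally, for the implications I would invoke the null space characterization (\ref{null-space-condition}): for a fixed measurement matrix $A$, program (\ref{recovery-problem-norm}) with norm $\norm{\cdot}_C$ recovers $x$ from $b = Ax$ precisely when $\ker(A)\cap D(C,[x]) = \set{0}$, and this event is monotone under inclusion of $D(C,[x])$. Hence, for one and the same random $A$, success with the larger descent cone entails success with the smaller one, so along the chain above statement $1$ implies $2$ implies $3$. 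Equivalently, monotonicity of the Gaussian width gives $w(D(B_\infty,[x]))\le w(D(\thetabody{1}(I_\infty),[x]))\le w(D(\Thetabody{1}(I_2),[x]))$, and Proposition~\ref{proposition: Gaussian-width-number} turns this into the corresponding ordering of the measurement thresholds.
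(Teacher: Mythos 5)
Your proof is correct, but it takes a genuinely different route from the paper's. The paper argues dually: it invokes the description $N(\Thetabody{1}(I),[x])=\cone\{\nabla f(x): f\in I \text{ convex quadric}\}$, compares the linear spans of the degree-two Gröbner basis elements of $I_2$ and $I_\infty$ to get $N(\Thetabody{1}(I_2),[x])\subset N(\Thetabody{1}(I_\infty),[x])$, and then passes to descent cones via the bi-duality relation $D=-N^*$. You instead work primally: you compute the gauge values $\gamma_{B_\infty}(x)=\gamma_{\thetabody{1}(I_\infty)}(x)=1$ and $\gamma_{\Thetabody{1}(I_2)}(x)=\sqrt N$, reduce the descent-cone chain to the single body inclusion $B_\infty\subseteq\thetabody{1}(I_\infty)\subseteq\sqrt N\,\Thetabody{1}(I_2)$, and prove the nontrivial inclusion by the rescaling trick $\tilde f(x)=N f(x/\sqrt N)$ together with Theorem~\ref{Theorem: theta1-convex-quadrics} and the real radicality of $I_\infty$ (needed to conclude $\tilde f\in I_\infty$ from its vanishing on $\Vcal(I_\infty)$; this is established in Section~\ref{section: reducedness}). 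Your approach buys self-containedness: it avoids both the unproved normal-cone-as-cone-of-gradients formula and the bi-duality step, relying only on the elementary fact that descent cones at a common point are monotone under inclusion of the bodies. The paper's approach is shorter and stays entirely at the level of the ideals' generators, but leans on the gradient description of the normal cone. Your handling of the implications $1\Rightarrow 2\Rightarrow 3$ via monotonicity of the null space condition~(\ref{null-space-condition}) for a fixed realization of $A$ is also cleaner than the paper's appeal to Gaussian widths and Proposition~\ref{proposition: Gaussian-width-number}, which only bounds sufficient thresholds rather than directly comparing success probabilities.
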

\begin{proof}
    It is straightforward that the Gaussian width is inclusive-preserving by definition,
    i.e. if $K_1\subset K_2$ as two convex cones, then $w(K_1)\leq w(K_2)$.
    It suffices to prove the inclusion relation in the theorem by proposition \ref{proposition: Gaussian-width-number}.

    For any ideal $I$,
    recall that $\Thetabody{1}(I)$ is the intersection of $0$-level sets($\set{x: f(x)\leq 0}$) of convex quadrics in the ideal $I$
    (Theorem \ref{Theorem: theta1-convex-quadrics}). 
    The normal cone can be expressed by these convex quadrics.
    \begin{equation}
        N(\Thetabody{1}(I), [x]) = \cone\set{\nabla f(x): f\in I, f\text{ convex quadrics}}.
    \end{equation}

    Now compare $I = I_2, I_\infty$.
    Returning to our settings of tensors,
    let $\nbf = (n_1,...,n_d)\in \N^d$ and $\R^{\nbf} = \R^{n_1\odots{\times}n_d}$.
    Recall their Gröbner basis from Section \ref{section: Gröbner},
    \[\Gcal_2 = \set{\sum_a x_a^2-1, x_ax_b-x_{a\wedge b}x_{a\vee b}, a,b\in[\nbf]},
    \Gcal_\infty = \set{x_a^2-1, x_ax_b-x_{a\bwedge b}x_{a\bvee b}, a,b\in[\nbf]}.\]
    Simple degree argument indicates that, 
    the convex quadrics in the ideal can be expressed as a linear combination of polynomials in the Gröbner basis;
    hence they contain no linear terms.
    Taking the gradient operator $\nabla$ gives
    $\nabla(\sum_a x^2_a-1) = \nabla(\sum_a (x_a^2-1))$.
    Now since any binomials in $\Gcal_2$ are also included by $\Gcal_\infty$, 
    we have
    \[\Span\set{\Gcal_2} \subset \Span\set{\Gcal_\infty}.\]
    It follows that $N(\Thetabody{1}(I_2), [x])\subset N(\Thetabody{1}(I_\infty), [x]).$
    By equation \ref{equation: T = -N*} and
    the fact that dual operator is inclusion-reversing, we have
     \[
    D(\Thetabody{1}(I_2), [x])\supset D(\thetabody{1}(I_\infty),[x])\supset D(B_\infty, [x]).   
    \]
    The last inequality follows from the fact $B_\infty\subset\Thetabody{1}(I_\infty)$
    and $x\in\Vcal(I_\infty)$ is an extreme point for both bodies. 
\end{proof}

\begin{remark}\label{remark: compare 2-infty} 
    According to \cite[Corollary 3.14]{Chandrasekaran_Recht_Parrilo_Willsky_2012},
    with $m\geq O(n)$ measurements,
    it suffices to recover a rank-1 signed tensor $x$ by using $\norm{\cdot}_{B_\infty}$.
    Our numerical results below suggest $m\geq O(n)$ should also be sufficient for $\Thetabody{1}(I_2)$-norm.
    Then we may conclude $O(n)$ is optimal.
\end{remark}

\subsection{Computation of Gaussian width}\label{section: computation-Gaussian-width}

We compute the Gaussian width of $D(\Thetabody{1}(I_2), [x])$
for rank-1 tensor $x\in\Vcal(I_2)$. 
The arguments of theorem \ref{theorem: compare 2-infty} and the inclusion

\[D(\thetabody{1}(I_2),[x])\subset D(\Thetabody{1}(I_2),[x]) \]
make our concentration on $\Thetabody{1}(I_2)$ reasonable.
Briefly speaking,
if the number of measurements are sufficient to recover a rank-1 tensor $x\in \Vcal(I_2)$ ($x\in\Vcal(I_\infty)$) with $\Thetabody{1}(I_2)$,
then it is also sufficient to use $\thetabody{1}(I_2)$ ($\thetabody{1}(I_\infty)$).

The dual trick from convex geometry and Jensen's inequality
\cite{Chandrasekaran_Recht_Parrilo_Willsky_2012}\cite{Tropp_2015}
imply that
\[
w(D(C, x))^2 \leq \Exp{\dist(g, D(C, x)^*)}{g}^2 \leq \Exp{\dist(g, D(C, x)^*)^2}{g}.
\]
Note $D(C,x)^* = -N(C,x)$. Next, we describe the normal cones of rank-1 tensors. 
Examples of illustrating the computation for sparse vectors and low rank matrices 
can be found in \cite[Section 4]{Tropp_2015}. 
We focus on rank-1 tensors within $\Vcal(I_2)\subset \R^\nbf = \R^{n_1\odots{\times} n_d}$.
It is known that $\Ogroup(n_1)\otimes\cdots \otimes \Ogroup(n_d)$ acts on $\Thetabody{1}(I_2)$,
which allows us to rotate every rank-1 tensor to a canonical form, i.e. $x_0 = [1,0,\cdots,0]\in\R^\nbf$.
We abbreviate $N(\Thetabody{1}(I_2), x_0)$ as $N(x_0)$. 
From now on, we use $x$ as variables for polynomials in $\R[X]$.
Next lemma is then a translation of lemma \ref{lemma: normal-cone-equiv} for $N(x_0)$.

\begin{lemma}\label{lemma: Nx-I2}
    $u\in N(x_0)$ if and only if one of the following holds.
    \begin{enumerate}
        \item There exists convex quadrics $f\in I, r>0$ such that $u = r\nabla f(x_0)$.
        \item There exists $r>0$ such that $1+\inner{ru}{x}\in \Sigma_1(I)\subset \R[X]/I$
        and $ru_{1...1} = 1$.
        %\item There exists $r>0$ such that $ru_{1...1} = 1$ and $\sup\limits_{y\in\Thetabody{1}%(I)}\inner{u}{y}\leq 1$.
    \end{enumerate}
\end{lemma}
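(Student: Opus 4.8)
The plan is to read off both characterizations from Lemma~\ref{lemma: normal-cone-equiv} applied to the convex body $C=\Thetabody{1}(I_2)$ at the point $x_0$, and then to rewrite the resulting membership ``$-ru\in\Thetabody{1}(I_2)^*$'' in two equivalent ways. First I would check the hypotheses: $x_0=\mathbf{e}_{1\dots1}$ is a variety point of $I_2$, hence an extreme point of $B_2$, and since $B_2\subseteq\Thetabody{1}(I_2)$, which is a symmetric, full-dimensional norm ball with $x_0$ on its boundary (Theorems~\ref{theorem: symmetry} and~\ref{theorem: extreme point}), the point $x_0$ has gauge $1$ there. Lemma~\ref{lemma: normal-cone-equiv} then gives: $u\in N(x_0)$ iff there is $r>0$ with $-ru\in\Thetabody{1}(I_2)^*$ and $\inner{ru}{x_0}=1$; since $x_0=\mathbf{e}_{1\dots1}$, the latter reads $ru_{1\dots1}=1$, the common normalization in (1) and (2). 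It remains to reexpress ``$-ru\in\Thetabody{1}(I_2)^*$'', first geometrically and then algebraically.

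For (1) I would use Theorem~\ref{Theorem: theta1-convex-quadrics}: $\Thetabody{1}(I_2)=\{x:f(x)\le 0\ \text{for every convex quadric } f\in I_2\}=\{x:g(x)\le 0\}$ with $g=\sup\{f: f\in I_2\ \text{a convex quadric with}\ \|f\|=1\}$ a convex function satisfying $g(x_0)=0$ (every such $f$ vanishes on $\Vcal(I_2)$). The subdifferential-of-a-supremum formula (Danskin's theorem) then identifies $N(x_0)=\R_{\ge 0}\,\partial g(x_0)$ with the closed conic hull of $\{\nabla f(x_0):f\in I_2\ \text{a convex quadric}\}$. This cone is in fact already closed: the convex quadrics in $I_2$ form a closed convex cone inside the finite-dimensional degree-$\le 2$ part of $I_2$, and the linear map $f\mapsto\nabla f(x_0)$ kills only the zero quadric on that cone — a convex quadric in $I_2$ with vanishing gradient at $x_0$ equals $(x-x_0)^T A (x-x_0)$, which is nonnegative and vanishes on the affinely spanning set $\Vcal(I_2)$, hence is $0$. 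Thus $u\in N(x_0)$ exactly when $u=r\nabla f(x_0)$ for some $r>0$ and convex quadric $f\in I_2$, which is (1). (This makes precise the formula for $N(\Thetabody{1}(I),[x])$ already used in the proof of Theorem~\ref{theorem: compare 2-infty}.)

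For (2) I would instead rewrite $\Thetabody{1}(I_2)^*$ via Lemma~\ref{lemma: section commutative with dual}: since $^c$ commutes with $^*$, $\Thetabody{1}(I_2)^*=(\Sigma_1(I_2)\cap V_1)^c$, so $w\in\Rbb^N$ lies in $\Thetabody{1}(I_2)^*$ exactly when the affine polynomial $1+\inner{w}{x}$ represents an element of $\Sigma_1(I_2)$ in $\Rbb[X]/I_2$. As $\Thetabody{1}(I_2)$ is symmetric (Theorem~\ref{theorem: symmetry}), its dual set is symmetric too, so ``$-ru\in\Thetabody{1}(I_2)^*$'' is the same as ``$ru\in\Thetabody{1}(I_2)^*$'', i.e. ``$1+\inner{ru}{x}\in\Sigma_1(I_2)$''; together with $ru_{1\dots1}=1$ this is (2).

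The step I expect to be the main obstacle is (1), specifically the semi-infinite nature of the intersection in Theorem~\ref{Theorem: theta1-convex-quadrics}: one must justify that $N(x_0)$ really equals the (closed) conic hull of the constraint gradients when the family of convex quadrics is infinite — this is where the supremum-function viewpoint and Danskin's theorem enter — and then that this gradient cone is closed, for which the finite-dimensionality of the degree-$\le 2$ part of $I_2$ (controlled by the Gröbner basis $\Gcal_2$) is essential. Everything in (2) and the reduction to Lemma~\ref{lemma: normal-cone-equiv} is then routine bookkeeping with the duality conventions fixed in the Preliminaries.
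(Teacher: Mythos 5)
Your argument is correct in substance, and it is considerably more detailed than what the paper provides: the paper gives no proof of this lemma at all, presenting it merely as ``a translation of lemma \ref{lemma: normal-cone-equiv}'', and the gradient description of $N(\Thetabody{1}(I),[x])$ that underlies condition (1) is asserted without justification only later, in the proof of Theorem \ref{theorem: compare 2-infty}. Your treatment of condition (2) is exactly the intended translation (Lemma \ref{lemma: normal-cone-equiv} combined with Lemma \ref{lemma: section commutative with dual} and the symmetry of the dual set), while for condition (1) you actually supply the missing argument that $N(x_0)$ equals the --- already closed --- cone $\{\nabla f(x_0): f\in I_2 \text{ a convex quadric}\}$, via the subdifferential of a supremum and the kernel argument for closedness. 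Note also that Lemma \ref{lemma: normal-cone-equiv} as printed requires $\inner{rv}{x}=-1$; your version with $+1$ is the correct one under the paper's conventions (taking $y=0\in C$ in $\inner{v}{y-x}\le 0$ forces $\inner{v}{x}\ge 0$), and it is the one needed to match the normalization $ru_{1\dots1}=1$ in condition (2).

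Two points in your treatment of (1) should be made explicit. First, the identity $N(\{g\le 0\},x_0)=\R_{\ge 0}\,\partial g(x_0)$ requires a Slater point, i.e. some $\bar x$ with $g(\bar x)<0$; without it the normal cone of the sublevel set can be strictly larger than the cone generated by the subdifferential. Here it holds at $\bar x=0$: for a normalized convex quadric $f=x^TAx+Bx+c\in I_2$ one has $f(x)+f(-x)=2x^TAx+2c=0$ for every $x\in\Vcal(I_2)$, so $f(0)=c=-x^TAx\le 0$, and equality would force $A=0$ and then $f=0$ since $\Vcal(I_2)$ spans $\R^N$; by compactness of the normalized family one gets $g(0)=\sup_f f(0)<0$. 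Second, the Danskin--Valadier formula for $\partial g(x_0)$ needs the index set to be compact with continuous dependence, which is again the compactness of the normalized convex quadrics inside the finite-dimensional space $(I_2)_{\le 2}$. With these two observations added, your proof is complete and fills genuine gaps left open by the paper.
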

To furthermore understand the structure of this cone $N(x_0)$,
we recall that in matrix case (\cite[Example 4.4]{Tropp_2015}),
the elements of the normal cone at $[1,0,...,0]$ always admit some zeros, i.e
\begin{equation}\label{equation: Nx-matrix}
    N(x_0) = \cone\set{\begin{pmatrix}
        1&0\\
        0&A
    \end{pmatrix}: \norm{A} \leq 1}.   
\end{equation}
In tensor case, we have the following lemma.
\begin{lemma}
    Let $\nbf = (n_1,...,n_d)\in\N^d$ and $\Ical_0 = \set{b\in[\nbf]: \#\set{i\in [d]:b_i\not = 1} = 1}$,
    then for any $u\in N(x_0)$, $b\in \Ical_0$, $u_b = 0$.
\end{lemma}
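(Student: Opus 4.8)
The plan is to use the characterization of the normal cone $N(x_0)$ given in Lemma~\ref{lemma: Nx-I2}, in particular condition~(2): a vector $u$ (after scaling so that $u_{1\dots1}=1$) lies in $N(x_0)$ exactly when the affine polynomial $p(x) = 1 + \inner{u}{x}$ is $1$-sos modulo $I_2$, i.e.\ there is a positive semidefinite quadratic form $Q$ on $V_1 = (\R[X]/I_2)_{\leq 1}$ with $p(x) = [1, (x_a)_{a\in[\nbf]}]\, Q\, [1, (x_a)_{a\in[\nbf]}]^T$ modulo $I_2$. Working in the monomial basis $\Bcal_1 = \{1\} \cup \{x_a : a\in[\nbf]\}$ and imposing compatibility with the Gr\"obner basis $\Gcal_2$ (as in Section~\ref{section: moment matrix for theta-1}), the reductions $x_a x_b \equiv x_{a\wedge b}x_{a\vee b}$ and $\sum_a x_a^2 \equiv 1$ constrain $Q$ to lie in the moment-matrix family $\Mcal_2$. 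Reading off coefficients: the constant term forces $Q_{00} + \sum_a Q_{aa} = 1$ with $\sum_a x_a^2$ absorbed into the constant, the linear coefficient of $x_b$ forces $u_b = 2Q_{0b}$, and the coefficient of each off-diagonal monomial $x_a x_b$ ($a\neq b$) must vanish after reduction.

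The key point is then a parity/support argument on the index $b \in \Ical_0$, i.e.\ $b$ agrees with $(1,\dots,1)$ in all but exactly one coordinate, say $b = (1,\dots,1,j,1,\dots,1)$ with $j\neq 1$ in position $k$. First I would show that $2Q_{0b} = u_b$ must vanish because there is no quadratic monomial in the basis whose reduction modulo $I_2$ produces $x_b$. Indeed, the monomial $x_b$ itself never appears on the right-hand side of a rank-$1$ binomial: if $x_c x_d$ reduces to $x_{c\wedge d} x_{c\vee d}$, the resulting monomial is a product of two basis variables $x_{c\wedge d}, x_{c\vee d}$, both of which differ from $(1,\dots,1)$ unless one of $c,d$ equals $(1,\dots,1)$ — but $x_b$ is linear, not quadratic, so it cannot be hit this way, and the relation $\sum_a x_a^2 - 1$ only affects the constant term. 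Hence in the expansion $p(x) = \sum Q_{\gamma\delta}\,(\text{reduced monomial})$, the coefficient of the linear monomial $x_b$ comes solely from the entries $Q_{0b}$ (contributing $2Q_{0b}$), so matching with $p(x) = 1 + \sum_a u_a x_a$ gives $u_b = 2Q_{0b}$, and separately one must check this is forced to be $0$.

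The actual mechanism forcing $Q_{0b}=0$ is positive semidefiniteness combined with the trace/normalization constraint, mirroring the matrix case in~\eqref{equation: Nx-matrix}. Since $p(x_0) = 1 + u_{1\dots1} = 2$ in the canonical normalization and $x_0 = e_{1\dots1}$, evaluating the sos representation at $x_0$ shows $Q$ restricted to the span of $\{1, x_{1\dots1}\}$ must be rank-deficient in a way that pins $Q_{00} = 1$, $Q_{0,(1\dots1)} = 1$, $Q_{(1\dots1),(1\dots1)} = 1$, forcing that $2\times2$ principal block to be $\begin{pmatrix}1&1\\1&1\end{pmatrix}$; then for any $b\in\Ical_0$, the principal $2\times2$ minor on rows/columns $\{(1\dots1), b\}$ together with the one on $\{0, b\}$ and the vanishing of the off-diagonal monomial coefficient $Q_{(1\dots1),b} + (\text{reduction terms}) = 0$ forces $Q_{0b}=0$, hence $u_b = 0$. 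I expect the main obstacle to be bookkeeping the off-diagonal coefficient equations correctly: one must carefully track, for each $b\in\Ical_0$, which pairs $(a,b)$ in the moment matrix reduce to the same monomial and thus get identified, analogous to the relations $Q_{12}+Q_{34}=0$ in the worked $2\times2$ example, and verify that the linear constraints plus p.s.d.\ constraints leave no freedom in the $\Ical_0$-indexed entries. A clean way to organize this is to invoke the $G_2 = \Ogroup(n_1)\otimes\cdots\otimes\Ogroup(n_d)$ symmetry to reduce to a single representative $b$ and then argue by a small explicit principal submatrix computation.
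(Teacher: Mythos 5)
Your route is genuinely different from the paper's and can be made to work, but the step where the hypothesis $b\in\Ical_0$ actually enters is exactly the step you defer to ``bookkeeping''. The paper argues directly from the convex-quadric description of the normal cone: a convex quadric $f=x^TAx+C$ in $I_2$ is, by a degree count, a \emph{constant-coefficient} linear combination of the elements of $\Gcal_2$; the vector $u$ is a positive multiple of the row of $A$ indexed by $a=(1,\dots,1)$; and $A_{a,b}=0$ because the monomial $x_ax_b$ never occurs in any element of $\Gcal_2$ when $b\in\Ical_0$ --- it is not a leading term since the pair $(a,b)$ is already reduced, and it is not a trailing term since $c\wedge d=(1,\dots,1)$ and $c\vee d=b$ force $\{c,d\}=\{a,b\}$ when $b$ differs from $(1,\dots,1)$ in a single coordinate. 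Your Gram-matrix argument needs this same combinatorial fact: the coefficient equation for the standard monomial $x_ax_b$ reads $Q_{ab}+\sum Q_{cd}=0$, summed over all pairs $\{c,d\}$ reducing to $(a,b)$, and only because that sum is empty for $b\in\Ical_0$ do you get $Q_{ab}=0$. As written, your proposal never verifies this and hence never uses the hypothesis on $b$.

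Two further corrections to your positive-semidefiniteness step. With the normalization $u_a=1$, matching coefficients gives $2Q_{0a}=1$, $Q_{cc}=Q_{aa}$ for all $c$, and $Q_{00}+Q_{aa}=1$ (since $x_a^2$ reduces to $1-\sum_{c\neq a}x_c^2$); the psd condition on the $\{0,a\}$ block then forces $Q_{00}=Q_{aa}=1/2$, not the all-ones block you wrote. More importantly, the $2\times2$ principal minors on $\{a,b\}$ and $\{0,b\}$ do \emph{not} force $Q_{0b}=0$: you need that $e_0-e_a$ is isotropic for $Q$ and hence, by psd-ness, lies in its kernel, which gives $Q_{0b}=Q_{ab}$ for every $b$ (equivalently, use the $3\times3$ minor on $\{0,a,b\}$). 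Combined with $Q_{ab}=0$ this yields $u_b=2Q_{0b}=0$. With these repairs your argument is valid; it is heavier than the paper's one-line observation about $\Gcal_2$, but it does have the side benefit of exhibiting the forced structure of any degree-$1$ Gram certificate at $x_0$, which the gradient argument does not.
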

\begin{proof}
    With the Gröbner basis, we know that a convex quadric $f\in I_2$ is
a linear combination of polynomials in $\Gcal_2$. Then $f(x) = x^TAx+C$. 
For any $u\in N(x_0)$, by the second statement in lemma \ref{lemma: Nx-I2},
there exists a convex quadric $f\in I_2$ s.t $u = r\nabla f(x_0) = 2rAx_0$.
Since $x_0 = [1,0,...,0]$,
$u$ is a positive scalar of the first row of $A$.
Now let $b\in\Ical_0$ and $a = (1,...,1)\in[\nbf]$. 
That $A_{a,b}$ will always vanish follows from the fact that $x_a x_b$ will never be a term in any elements of $\Gcal_2$.
Therefore $u_b = 2r A_{a,b} = 0$. 
\end{proof}

Let $a = (1,...,1)\in[\nbf]$, the first index of all indexes. 
Define $\Ical := [\nbf]\setminus (\Ical_0\cup{a})$.
Let $g\in\R^\nbf$ be a vector. 
For any index subset $\Jcal\subset [\nbf]$,
$g_\Jcal$ denotes the projection of $g$ onto these indices.
Motivated by \cite[Example 4.4]{Tropp_2015},
we calculate in the following way,
\begin{equation*}
    \Exp{\dist(g, N(x_0))^2}{g} = 
    \Exp{\inf_{\tau>0, u\in N(x_0), u_a = 1}
    |g_a - \tau|^2 + \norm{g_{\Ical_0}}_{2}^2
    +\norm{g_\Ical-\tau u_\Ical}_{2}^2}{g}
\end{equation*}
Define $N_\Ical:= \set{u_\Ical\in \R^{|\Ical|}: u\in N(x_0),u_a = 1, u_{\Ical_0} = 0}$.
Then the above expression is actually 
\[
    \Exp{\dist(g, N(x_0))^2}{g} = 
    \Exp{\inf_{\tau>0} |g_a - \tau|^2 
    + \norm{g_{\Ical_0}}_{2}^2
    +\inf_{\tau>0, u_\Ical\in N_\Ical}\norm{g_\Ical-\tau u_\Ical}_{2}^2}{g}
\]
Take $\tau = \gamma_{N_\Ical}(g_\Ical)$, the gauge function of $N_\Ical$,
i.e. $\gamma_{N_\Ical}(g_\Ical) = \inf\set{r>0: g_\Ical\in rN_\Ical}$.
Under this setting, if $N_\Ical$ is full dimensional in $\R^\Ical$ 
and contains the origin in its interior (proposition \ref{ref: NI full-dim}),
the third term above can always vanish.
This leads us to an upper bound of the Gaussian width with the expectation of this gauge function,
i.e.
\begin{equation}\label{inequality: Gaussian-width-gamma-NJ}
    \Exp{\dist(g, N(x_0))^2}{g} \leq 
    \Exp{|\Ical_0|+1+\gamma_{N_\Ical}(g_\Ical)^2}{g}.
\end{equation}
To achieve this inequality, we prove the following proposition.
\begin{proposition}\label{ref: NI full-dim}
    $N_\Ical\subset \R^\Ical$ defined above is indeed full dimensional and contains
    the origin as an interior point.
\end{proposition}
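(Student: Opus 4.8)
The plan is to produce enough explicit elements of $N_\Ical$ that their convex hull already contains a Euclidean neighbourhood of the origin in $\R^{\Ical}$. First I would record that $N_\Ical$ is convex: it is the image under the linear coordinate projection $u \mapsto u_\Ical$ of $\{u \in N(x_0) : u_a = 1,\ u_{\Ical_0} = 0\}$, and since $u_{\Ical_0} = 0$ holds automatically for every $u \in N(x_0)$ by the preceding lemma, this is simply the affine section $\{u \in N(x_0) : u_a = 1\}$ of the convex cone $N(x_0)$; convex combinations of its points again lie in $N(x_0)$ and have $a$-coordinate $1$, so $N_\Ical$ is convex. If $\Ical = \emptyset$ there is nothing to prove, so assume $\Ical \neq \emptyset$. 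It then suffices to show that for every $c \in \Ical$ both $e_c$ and $-e_c$ — the standard basis vector of $\R^{\Ical}$ indexed by $c$ and its negative — lie in $N_\Ical$: by convexity $N_\Ical$ would then contain $\conv\{\pm e_c : c \in \Ical\}$, the cross-polytope in $\R^{\Ical}$, which is full-dimensional and has the origin in its interior.

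To obtain these vectors I would use Lemma~\ref{lemma: Nx-I2}(1), exhibiting explicit convex quadrics $f \in I_2$ with $\nabla f(x_0) = e_a \pm e_c$. Fix $c \in \Ical$; by definition its support $S := \{i \in [d] : c_i \neq 1\}$ has $|S| \geq 2$. Choose $i_0 \in S$ and split $c$ into $c', d' \in [\nbf]$ by setting $c'_{i_0} = c_{i_0}$ and $c'_i = 1$ for $i \neq i_0$, and $d'_{i_0} = 1$ and $d'_i = c_i$ for $i \neq i_0$. A short computation gives $c' \wedge d' = (1,\dots,1) = a$ and $c' \vee d' = c$, so $g_c := x_{c'}x_{d'} - x_a x_c$ is one of the binomial generators of $I_2$; moreover, since $|S| \geq 2$, the four indices $a, c, c', d'$ are pairwise distinct. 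Hence the quadrics
\[
    f_c^{\pm} := \tfrac12\Bigl(\sum_{b\in[\nbf]} x_b^2 - 1\Bigr) \pm g_c \in I_2
\]
have Hessian equal to the identity plus a perturbation supported on the four disjoint positions $\{c',d'\}$ and $\{a,c\}$: it is block diagonal with one block $\bigl(\begin{smallmatrix} 1 & 1 \\ 1 & 1\end{smallmatrix}\bigr)$, one block $\bigl(\begin{smallmatrix} 1 & -1 \\ -1 & 1\end{smallmatrix}\bigr)$ (on $\{c',d'\}$ and $\{a,c\}$ in an order depending on the sign chosen), and the identity on the remaining coordinates; each $2\times 2$ block has eigenvalues $0$ and $2$, so the Hessian is positive semidefinite and $f_c^{\pm}$ are convex quadrics in $I_2$. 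Differentiating at $x_0 = e_a$ and using that $x_{c'}, x_{d'}, x_c$ all vanish at $e_a$ gives $\nabla f_c^{+}(x_0) = e_a - e_c$ and $\nabla f_c^{-}(x_0) = e_a + e_c$; by Lemma~\ref{lemma: Nx-I2} these lie in $N(x_0)$, and since $a, c \notin \Ical_0$ their $a$-coordinate is $1$, their $\Ical_0$-coordinates vanish, and their restrictions to $\Ical$ are $\mp e_c$. This puts $\pm e_c$ in $N_\Ical$, which together with the first paragraph finishes the proof.

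I do not anticipate a genuine obstacle: every step is a finite verification. The one place to be careful is the combinatorial choice of $c'$ and $d'$ — one must split $c$ into two indices that differ from \emph{both} $a$ and $c$, so that adding $\pm g_c$ disturbs four pairwise distinct entries of the Hessian and keeps it block diagonal and positive semidefinite. This is precisely where the hypothesis $c \in \Ical$ (i.e. $|S| \geq 2$) is used rather than $c \in \Ical_0$, and it is also the structural reason why the $\Ical_0$-coordinates of elements of $N(x_0)$ cannot be moved, consistent with the preceding lemma.
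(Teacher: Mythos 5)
Your proof is correct, and it follows the same overall strategy as the paper's: reduce the claim to showing $\pm e_c$ (restricted to $\Ical$) lies in $N_\Ical$ for every $c\in\Ical$, then conclude by convexity of the affine slice of the normal cone. The difference lies in how the membership $e_a\pm e_c\in N(x_0)$ is certified. The paper matricizes $e_a\pm e_c$ along a split $\{1\},\{2,\dots,d\}$ separating two coordinates where $c$ differs from $1$, recognizes the result as an element of the matrix normal cone in the form of equation \eqref{equation: Nx-matrix}, and then invokes the theta-$1$ exactness of the matrix nuclear norm from Rauhut--Stojanac together with the inclusion $J_2\subset I_2$ to extract a convex quadric via Lemma \ref{lemma: Nx-I2}. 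You instead write the convex quadric down explicitly, $f_c^{\pm}=\tfrac12(\sum_b x_b^2-1)\pm(x_{c'}x_{d'}-x_ax_c)$ with $c'\wedge d'=a$, $c'\vee d'=c$, and verify positive semidefiniteness of the Hessian by hand through the block decomposition into $2\times 2$ blocks with eigenvalues $0$ and $2$; I checked that the four indices $a,c,c',d'$ are pairwise distinct exactly when $|S|\ge 2$, i.e.\ when $c\in\Ical$, and that the gradients at $x_0=e_a$ come out as claimed. Your route is more self-contained and elementary, avoiding the appeal to matricization and to the external exactness result; the paper's route is shorter on the page and makes visible the structural connection to the matrix case (which the surrounding section is exploiting anyway). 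Both are valid; if anything, your explicit certificate also makes transparent why the construction fails for $c\in\Ical_0$, which nicely complements the preceding lemma.
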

\begin{proof}
    Let $a = (1,...,1)\in[\nbf]$ and $b\in\Ical\subset[\nbf]$.
    Let $e_a, e_b$ be the standard vector in $\R^\nbf$ respectively. 
    Then $v\in N_\Ical$ if and only if $v+e_a\in N(x_0)$ by briefly extend $v$ to $\R^\nbf$ with zeros.
    We now argue that $(\pm e_b)_\Ical \in N_\Ical$. Then the proposition follows.

    In fact, for any $b\in \Ical$, it is necessary that $\#\set{i\in [d]: b_i\not = 1} > 1$.
    Suppose $b_1, b_2\not = 1$. Then the matricization with respect to $\set{1},\set{2,...,d}$
    of $u_0 = e_a \pm e_b$ have only two $\pm 1$s. One is at the position $(1,1)$ the other 
    is neither at the first row nor the first column because $b_1, b_2\not = 1$.
    Then this matricization of $u_0$ is indeed of the form of eq.\ref{equation: Nx-matrix}.
    It means $u_0$ is in the normal cone of the nuclear norm ball for matrices of size $\R^{n_1\times\cdots \times n_d}$
    at the rank-1 matrix $[1,0,...,0]$. Now let $J_2$ be the defining ideal of nuclear norm in this matricization.
    $J_2\subset I_2$ since a tensor is of rank-1 if and only if every matricization has rank-1 (by lemma \ref{lem-rk-1-eq}).
    Since matrix nuclear 2-norm is theta-1 exact by \cite{Rauhut_Stojanac_2017}.
    Lemma \ref{lemma: Nx-I2} tells us there exists a convex quadrics in $J_2\subset I_2$ such that
    $u_0$ is a positive scalar of $\nabla f(x)$. Then $u_0\in N(x_0)$ which means $(\pm e_b)_\Ical \in N_\Ical$.
\end{proof}
Now we explain that the computation of $\gamma_{N_\Ical}$ is indeed a semidefinite program.
This allows us to numerically estimate its expectation in the next section.
In fact, $v\in N_\Ical$ if and only if 
$1+x_a +\inner{v}{x_\Ical}\in\Sigma_1(I)$.
Then $g_\Ical\in rN_\Ical$ if and only if
\begin{equation}
    r+rx_a +\inner{g_\Ical}{x_\Ical}\in\Sigma_1(I).
\end{equation}
We have already demonstrated in Section \ref{section: dual body and k-sos} that
determining membership of $\Sigma_1(I)$ is a semidefinite program.

Suppose $n_i = n$ for all $i$.
We calculate $|\Ical_0| = \sum_i (n_i-1) \sim O(dn)$. 
Numerical results suggest that
$\Exp{\gamma_{N_\Ical}(g_\Ical)}{g} \sim O(n)$ as well, hence we conjecture that $O(n)$ number of measurements are enough 
to recover rank-1 tensors with upper theta-1 nuclear 2-norm.
Consequently, $O(n)$ should also be sufficient for $\thetabody{1}(I_2)$ and $\thetabody{1}(I_\infty)$
to recover the extreme points on their unit ball.

\subsection{Numerical Results}\label{section: numerical-results}

To solve the semidefinite program, 
we use JuMP(\cite{JuMP}) in Julia with the solver 
SCS(\cite{scs}).

\subsubsection*{Comparison between Theta-1 Nuclear $p$-Norm on low-rank signed tensors}

Figure \ref{figure: compare-signed} visualizes the results of theorem \ref{theorem: compare 2-infty}.
In the theorem, 
we argued that to recover rank-1 signed tensors, i.e. points in $\Vcal(I_\infty)$,
$\thetabody{1}(I_\infty)$ norm requires less measurements than $\Thetabody{1}(I_2)$.
To numerically approve this,
we randomly generated 50 tensors of size $4\times 4\times 4$ of rank up to 3 through linear combinations of rank-1 signed tensors.
We say the recovery is successful if the relative error between the original tensor and the output tensor is smaller than $10^{-3}$.
Figure \ref{fig:compare} shows the number of measurements required for recovery with different norms. The $x$-axis stands for the different experiments on these 50 tensors
while the $y$-axis exhibits the number of measurements required for successful recovery.
The results clearly show that $\thetabody{1}(I_\infty)$ requires less measurements.

\iffalse
\begin{figure}
    \centering
    \includesvg[width=0.7\linewidth]{Compare/success_m.svg}
    \caption{Number of measurements required by $\thetabody{1}(I_\infty)$ and $\thetabody{1}(I_2)$ on signed tensors}
    \label{fig:compare}
\end{figure}
\fi

Additionally, figure \ref{fig: time-compare} compares the computing time for recovering rank-1 signed tensors.
We fix the tensor where all entries are $1$.
Figure \ref{fig: time-compare} presents the time used as the number of measurements
increases from 1 to 50.
First, both curves grow at the beginning 
since the measurements are insufficient;
however, once the measurements are sufficient,
the computing time drops significantly.
The figure also reveals a trade-off between the choice of these two norms.
Specifically, 
using $\thetabody{1}(I_\infty)$ requires less measurements 
while employing $\thetabody{1}(I_2)$ tends to consume less time.

\iffalse
\begin{figure}
    \centering
    \includesvg[width=0.7\linewidth]{Compare/time_compare.svg}
    \caption{Computing time comparison on rank-1 signed tensors}
    \label{fig: time-compare}
\end{figure}
\fi

\begin{figure}[htbp]
\centering

    \begin{minipage}[t]{.5\linewidth}
        \includesvg[scale = 0.4]{Compare/success_m.svg}
        \subcaption{Comparison of number of measurements required}
        \label{fig:compare}
    \end{minipage} % do not leave space!!
    \begin{minipage}[t]{.45\linewidth}
        \includesvg[scale = 0.4]{Compare/time_compare.svg}
        \subcaption{Computing time comparison}
        \label{fig: time-compare}
    \end{minipage} % do not leave space!!

\caption{Comparison of $\thetabody{1}(I_\infty)$ and $\thetabody{1}(I_2)$ on signed tensors}
\label{figure: compare-signed}
\end{figure}

Furthermore, we did similar experiments on general low-rank tensors which are not necessarily a linear combination of signed tensors. The results are shown in figure \ref{fig:compare on random}.
Note that $\Vcal(I_\infty)\subset \sqrt{n_1\cdots n_d}\Vcal(I_2)$ as a "0-measure" subset.
Clearly, $\thetabody{1}(I_\infty)$ does not work on low-rank tensors in general, since
it requires almost $m=n_1\cdots n_d$ measurements.

\begin{figure}
    \centering
    \includesvg[width=0.5\linewidth]{Compare/random_success_m.svg}
    \caption{Number of measurements required by $\thetabody{1}(I_\infty)$ and $\thetabody{1}(I_2)$ on general tensors}
    \label{fig:compare on random}
\end{figure}

\subsubsection*{Sufficient measurements estimation of Theta-1 Nuclear 2-Norm}

We present a numerical estimation of the lower bound of sufficient number of measurements $m_0$
with respect to the $\Thetabody{1}(I_2)$ norm. 
Instead of directly estimating $m_0$
by generating examples with increasing measurements until the probability of successful recovery reaches $95\%$, 
we computed the Gaussian width as an alternative due to computational expense and estimation inaccuracy.
For instance, iterating the number of measurements from 10 to 50 and repeating the recovery process 100 times would require solving 4,000 semidefinite programs. 
Furthermore, this computation must be repeated for tensors of size $n\times n\times n$ with 
n ranging from 2 to 9. 
Additionally, the moment matrix has a size of $(N+1)\times (N+1)$ with $N = n^3$, 
further causing huge computational complexity.
Recall inequality \ref{inequality: Gaussian-width-gamma-NJ}
\[
    w(D(\Thetabody{1}(I_2),[x_0])\leq \Exp{|\Ical_0|+1+\gamma_{N_\Ical}(g_\Ical)^2}{g}.
\]
As argued, $|\Ical_0|\sim O(nd)$ and computing $\gamma_{N_\Ical}(g_\Ical)^2$ is a semidefinite program.
To obtain an average value we compute $\gamma_{N_\Ical}(g_\Ical)^2$ for $d=3, n=1,...,9$ and $d=4, n=1,...,8$ for 100 times. 
The results shown in the figure \ref{figure: gamma_NJ}
reveal that the trends are quite linear.
This suggests that
\begin{equation}
    w(D(\Thetabody{1}(I_2),x_0))^2 \leq O(n).
\end{equation}
Consequently, it follows that $m\geq O(n)$ should be sufficient for recovering rank-1 tensors
---with either $\thetabody{1}(I_2)$ or nuclear norm itself--- or for recovering rank-1 signed tensors with $\thetabody{1}(I_\infty)$.
%For rank-$r$ tensors, from previous experience in matrix completion and vector recovery,
%we would predict that $m\geq O(rn)$ should be enough.
\begin{figure}[htbp]
\centering

    \begin{minipage}[t]{.5\linewidth}
        \includesvg[scale = 0.4]{gamma_K_rank1/rank1_order3.svg}
        \subcaption{$n\times n\times n$ tensors}
    \end{minipage} % do not leave space!!
    \begin{minipage}[t]{.45\linewidth}
        \includesvg[scale = 0.4]{gamma_K_rank1/rank1_order4.svg}
        \subcaption{$n\times n\times n\times n$ tensors}
    \end{minipage} % do not leave space!!

\caption{Estimation of $\Exp{\gamma_{N_\Ical}(g_\Ical)^2}{g}$ for tensors of different order = 3, 4}
\label{figure: gamma_NJ}
\end{figure}

\quad\\ \quad\\ \quad\\ \quad\\
\bibliographystyle{unsrt}
\bibliography{library}
%\printbibliography

\end{document}